\setlist[enumerate,1]{label=(\alph*), ref=(\alph*)}
\newtheorem{introtheorem}{Theorem}
\theoremstyle{plain}
\newtheorem{theorem}{Theorem}[section]
\newtheorem{lemma}[theorem]{Lemma}
\newtheorem{corollary}[theorem]{Corollary}
\theoremstyle{remark}
\newtheorem{remark}[theorem]{Remark}
\newtheorem*{remark*}{Remark}
\newtheorem{example}[theorem]{Example}
\theoremstyle{definition}
\newtheorem{definition}[theorem]{Definition}
\newcommand{\ccspace}[1]{\mathscr{K}(#1)}
\newcommand{\dspace}[2]{\mathscr{D}(#1,#2)}
\newcommand{\project}[1]{{#1}_\natural}
\newcommand{\perpproject}[1]{#1_\natural^\perp}
\newcommand{\grass}[2]{\mathbf{G}(#1,#2)}
\newcommand{\measureball}[2]{{#1}\,{#2}}
\newcommand{\Var}[1]{\mathbf{V}_{#1}}     
\DeclareMathOperator{\VarTan}{VarTan}     
\newcommand{\var}[1]{\mathbf{v}_{#1}}     
\newcommand{\adim}{n}
\newcommand{\vdim}{d}
\newcommand{\oball}[2]{\mathbf{U}(#1,#2)}
\newcommand{\cball}[2]{\mathbf{B}(#1,#2)}
\newcommand{\sphere}[1]{\mathbb{S}^{#1}}
\newcommand{\textint}[2]{{\textstyle\int_{#1}^{#2}}}
\newcommand{\nat}{\mathbb{N}}
\newcommand{\natp}{\mathscr{P}}
\newcommand{\integers}{\mathbf{Z}}
\newcommand{\adm}[1]{\mathfrak{D}({#1})}
\newcommand{\R}{\mathbf{R}}
\newcommand{\CF}{\mathds{1}}
\newcommand{\Sh}{\mathbf{H}}
\newcommand{\HM}{\mathscr{H}}
\newcommand{\HD}{{d_{\HM}}}
\newcommand{\HDK}[1]{{d_{\HM,#1}}}
\newcommand{\density}{\boldsymbol{\Theta}}
\newcommand{\eqclsl}{\boldsymbol{[}}
\newcommand{\eqclsr}{\boldsymbol{]}}
\newcommand{\unitmeasure}[1]{\boldsymbol{\alpha}(#1)}
\newcommand{\restrict}{ \mathop{ \rule[1pt]{.5pt}{6pt} \rule[1pt]{4pt}{0.5pt} }\nolimits }
\newcommand{\ud}{\ensuremath{\,\mathrm{d}}}
\newcommand{\uD}{\ensuremath{\mathrm{D}}}
\newcommand{\id}[1]{\mathrm{id}_{#1}}
\newcommand{\lIm}{[}
\newcommand{\rIm}{]}
\newcommand{\scale}[1]{\boldsymbol{\mu}_{#1}}
\newcommand{\trans}[1]{\boldsymbol{\tau}_{#1}}
\newcommand{\tbcup}{\mathop{{\textstyle \bigcup}}}
\newcommand{\Clos}[1]{\mathop{\mathrm{Clos}}#1}
\newcommand{\side}[1]{\mathbf{l}(#1)}
\newcommand{\centre}[1]{\mathbf{c}(#1)}
\newcommand{\vertex}[1]{\mathbf{o}(#1)}
\newcommand{\cInt}[1]{\operatorname{Int}_{\mathrm{c}}(#1)}
\newcommand{\cBdry}[1]{\partial_{\mathrm{c}}#1}
\newcommand{\cubes}{\mathbf{K}}
\newcommand{\CX}{\mathbf{CX}}
\newcommand{\Dirac}[1]{\operatorname{Dirac}(#1)}
\newcommand{\VF}{\mathscr{X}}
\DeclareMathOperator{\Hom}{Hom}
\newcommand{\Bdry}[1]{\partial{}{#1}}
\DeclareMathOperator{\ap}{ap}
\DeclareMathOperator{\lin}{span}
\newcommand{\cnt}{\mathscr{C}}
\newcommand{\orthproj}[2]{\mathbf{O}^\ast({#1},{#2})}
\newcommand{\orthgroup}[1]{\mathbf{O}({#1})}
\DeclareMathOperator{\Int}{Int}
\DeclareMathOperator{\spt}{spt}
\DeclareMathOperator{\Tan}{Tan}
\DeclareMathOperator{\Lip}{Lip}
\DeclareMathOperator{\dist}{dist}
\DeclareMathOperator{\diam}{diam}
\DeclareMathOperator{\without}{\sim}
\DeclareMathOperator{\im}{im}
\DeclareMathOperator{\FAUE}{AUE}
\DeclareMathOperator{\FAE}{AE}
\DeclareMathOperator{\FBC}{BC}
\DeclareMathOperator{\FwBC}{wBC}
\DeclareMathOperator{\FAC}{AC}
\date{}
\title{Equivalence of the ellipticity conditions for geometric variational problems.}
\author{
  Antonio De Rosa
  \and
  S{\l}awomir Kolasi{\'n}ski
}
\begin{document}

\maketitle


\begin{abstract}
    We exploit the so called \emph{atomic condition}, recently defined in \cite[Comm. Pure
    Appl. Math.]{DDG2016rect} and proved to be necessary and sufficient for the
    validity of the anisotropic counterpart of the Allard rectifiability
    theorem. In~particular, we address an open question of this seminal work,
    showing that the atomic condition implies the strict Almgren geometric
    ellipticity condition.
\end{abstract}

\section{Introduction}
Since the pioneering works of Almgren \cite{Alm68,Alm76}, a deep effort has been
devoted to the understanding of ellptic integrands in geometric variational
problems. In particular, Almgren introduced the class of elliptic geometric
integrands (\cite[IV.1(7)]{Alm76} or~\cite[1.6(2)]{Alm68}), further denoted
$\FAUE$, which allowed him to prove regularity for minimisers in~\cite{Alm68}.
 
Very recently, an ongoing interest on the anisotropic Plateau problem has lead
to a series of reformulations and results in this direction,
see~\cite{HP2016gm,DDG2016,DDG2017,DDG2017b, DeR2016,FangKol2017}.
In~particular, in~\cite{DDG2016rect} (see also Definition~\ref{def:AC}) a~new
ellipticity condition, called the~\emph{atomic condition}, further denoted~$\FAC$,
has been introduced and proved to be necessary and sufficient to get an Allard type
rectifiability result for varifolds whose anisotropic first variation is a~Radon
measure. The authors can prove that, in co-dimension~one and in dimension~one,
$\FAC$~is equivalent to the strict convexity of the integrand.

For general co-dimension there is no understanding of~$\FAC$ in the~literature
and this is~stated as an open problem in~\cite[Page 2]{DDG2016rect}:
\begin{quote}
    ``Since the atomic condition $\FAC$ is essentially necessary to the validity of
    the rectifiability theorem, it is relevant to relate it to the previous
    known notions of \emph{ellipticity} (or \emph{convexity}) of $F$ with
    respect to the ``plane'' variable. This task seems to be quite hard in the
    general case.''
\end{quote}
The aim of this paper is to address this open question, comparing
condition~$\FAC$ with the classical notion of geometric ellipticity introduced
by~Almgren. 

We~present for the moment an informal version of our
main result, see~\ref{thm:wBC-in-AE}:
\begin{introtheorem}
    \label{roughth}
    If a $\cnt^1$~integrand satisfies the atomic condition at some point $x \in
    \R^{\adim}$, then it also satisfies the strict Almgren ellipticity
    condition at~$x$; see~\ref{thm:wBC-in-AE}.
\end{introtheorem}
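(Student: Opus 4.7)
The natural strategy is a proof by contradiction that funnels through the rectifiability theorem of~\cite{DDG2016rect}. Suppose that $F \in \cnt^1$ satisfies $\FAC$ at $x$ yet strict $\FAE$ fails at $x$. Unwinding the definition, one obtains a $d$-plane $\pi$, a sequence $\epsilon_j \to 0$, and competitors $T_j$ for disks $D_j \subset \pi$ centered at $x$ with radii $r_j \to 0$ and $\partial T_j = \partial D_j$, such that the anisotropic $F$-energy of $T_j$ exceeds that of $D_j$ by at most $\epsilon_j(\mathbf{M}(T_j)-\mathbf{M}(D_j))$, while $\mathbf{M}(T_j) > \mathbf{M}(D_j)$.

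The first step is to normalize by translating to~$x$ and dilating by $r_j^{-1}$. Since $F$ is $\cnt^1$, the rescaled integrand converges uniformly to the frozen-coefficient integrand $F(x,\cdot)$ on compact subsets of the Grassmannian bundle, with an error of order $r_j$. This reduces the question to a constant-coefficient one: a sequence of rescaled currents $\widetilde T_j$ sharing the boundary of a fixed unit disk $D \subset \pi$, whose $F(x,\cdot)$-energy excess over $D$ is asymptotically negligible compared with the mass excess. Associating the varifolds $V_j := \var{\widetilde T_j}$, the near-minimality translates, via the anisotropic first variation formula, to the statement that the total variations $\|\delta_{F(x,\cdot)}V_j\|$ converge to zero (modulo a harmless boundary contribution).

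The crucial step is now to extract a weak* subsequential limit $V$ of $V_j$ and to invoke the rectifiability theorem of~\cite{DDG2016rect}, which applies precisely because of $\FAC$; this yields that $V$ is $d$-rectifiable. The atomic condition further pins down the tangent planes: since $\FAC$ asserts that the linear functional induced by $D_\tau F(x,\pi)$ on simple $d$-vectors attains its maximum uniquely at~$\pi$, almost every approximate tangent plane of $V$ must coincide with~$\pi$. Combined with the boundary constraint $\partial V = \partial D$ inherited from $\partial T_j = \partial D_j$, the constancy of orientation forces $V$ to agree with $D$ as an oriented integer rectifiable varifold. Consequently $\mathbf{M}(\widetilde T_j)\to \mathbf{M}(D)$, contradicting the strict inequality $\mathbf{M}(\widetilde T_j) > \mathbf{M}(D)$ obtained in the limit.

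The main obstacle I~anticipate is the quantitative passage from ``energy excess much smaller than mass excess'' to ``anisotropic first variation has small total mass''. In the isotropic case this is transparent from the first variation formula for area, but in the anisotropic setting one must carefully trace the duality between the gradient $D_\tau F(x,\pi)$ and admissible deformation vector fields, so that the near-minimality hypothesis does yield a genuine bound on $\|\delta_{F(x,\cdot)}V_j\|$. A secondary technical point is that $\FAC$ is a \emph{pointwise} condition at~$x$ only, while rectifiability is applied to the limit varifold away from~$x$; transporting this information uniformly across a neighborhood is exactly where the $\cnt^1$ continuity of $F$ is indispensable, and it is natural to introduce an intermediate condition (likely the $\FwBC$ alluded to in the referenced theorem label) to isolate this linearized step from the geometric one.
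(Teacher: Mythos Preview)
Your proposal has a genuine gap at its core, and the overall route diverges substantially from the paper's.

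First, the negation you write down is not the negation of \emph{strict} Almgren ellipticity. In this paper $\FAE_x$ is tested with the \emph{frozen} integrand $F^x$ against test pairs $(S,D)$ of compact sets (not currents), and its failure produces a \emph{single} test pair with $\HM^{\vdim}(S)>\HM^{\vdim}(D)$ and $\Psi_{F^x}(S)\le\Psi_{F^x}(D)$. There is no sequence $\varepsilon_j\to 0$, no shrinking radii $r_j$, and no blow-up to perform; the problem is already constant-coefficient. Your setup with $\varepsilon_j(\mathbf{M}(T_j)-\mathbf{M}(D_j))$ is the negation of the \emph{uniform} condition $\FAUE_x$, which the paper explicitly notes cannot be deduced from $\FAC$ (Remark~\ref{remarkimpl}).

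Second, and more seriously, the step ``near-minimality $\Rightarrow$ $\|\delta_{F^x}V_j\|\to 0$'' does not hold. A competitor whose energy is close to that of the disk need not be close to stationary: energy excess controls nothing about the first variation unless the competitor is itself a minimiser or critical point. You flag this as an ``obstacle'', but it is a genuine gap with no workaround along these lines. Relatedly, your description of $\FAC$ as ``the linear functional induced by $D_\tau F(x,\pi)$ attains its maximum uniquely at $\pi$'' is not what $\FAC$ says; $\FAC$ is a condition on the kernel of the averaged operator $A_x(\mu)=\int B_F(x,T)\,d\mu(T)$, and does not directly give the pointwise tangent-plane rigidity you invoke. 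Finally, even granting everything, $\mathbf{M}(\widetilde T_j)\to\mathbf{M}(D)$ does not contradict the strict inequality $\mathbf{M}(\widetilde T_j)>\mathbf{M}(D)$ along the sequence.

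The paper's argument is structurally different. It first reformulates $\FAC_x$ as the equivalent condition $\FBC_x$: if $W=(\HM^{\vdim}\!\restrict T)\times\mu$ satisfies $\delta_{F^x}W=0$ then $\mu=\Dirac{T}$. Given a counterexample test pair $(S,Q)$, the paper \emph{minimises} $\Phi_{F^x}$ in the competitor class (solving an anisotropic Plateau problem, Theorem~\ref{thm:wbc-exist}) to obtain a set $P$ that is genuinely $\Phi_{F^x}$-stationary away from $\cBdry Q$. It then performs a \emph{homogenisation}: the $2^N$-multiplications $P_N$ of $P$ have the same energy as $P$, are again minimising (this requires the nontrivial topological fact that multiplications of test pairs remain test pairs, Section~\ref{sec:all-test-pairs-good}), and their varifold limit is translation-invariant along $T$, hence of the form $\vartheta(\HM^{\vdim}\!\restrict T)\times\mu$ with $\vartheta>1$ and $\delta_{F^x}$-stationary. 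Now $\FBC_x$ forces $\mu=\Dirac T$, giving $\Phi_{F^x}(Q)<\vartheta\,\Phi_{F^x}(Q)=\Phi_{F^x}(P)\le\Phi_{F^x}(Q)$, a contradiction. The stationarity you need is \emph{produced} by minimisation, not extracted from an energy-excess hypothesis.
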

In~particular, if the co-dimension equals one, then strict convexity of the
integrand implies the strict Almgren ellipticity. Moreover in higher
co-dimension, our work paves the way to construct anisotropic functionals
satisfying the Almgren ellipticity condition. Indeed, although the theory of
existence and regularity for minimizers has been actively developed in the
literature, there are essentially no examples (in higeher co-dimension) of
Almgren elliptic integrands, beside the perturbations of the area functional.

It is worth to remark that there is no hope of improving Theorem~\ref{roughth}
showing that the atomic condition implies the uniform Almgren ellipticity
condition, see Remark~\ref{remarkimpl}. Indeed, if this was the case, in
co-dimension one the strict convexity of the integrand (which is equivalent to
the atomic condition) would imply the uniform Almgren ellipticity, which in~turn
implies the uniform convexity, leading to a contradiction.

In order to prove Theorem~\ref{roughth}, we need to get several auxiliary
results of independent interest. In~particular, in Section~\ref{notion} we
introduce another ellipticity condition for integrands, named~$\FBC$, and in
Section~\ref{equiv} we prove that it is equivalent to~$\FAC$; see
Definition~\ref{def:BC} and Lemma~\ref{lem:AC-eq-BC}. $\FBC$ has the advantage
of being more geometric than the algebraic condition~$\FAC$, thus providing
a~useful tool not only for the proof of Theorem~\ref{roughth}, but also for
future further understanding of the atomic condition.
In~Section~\ref{sec:rect-test-pairs} we show that the original Almgren
ellipticity condition~\cite[IV.1(7)]{Alm76} is the same as the condition used
in~\cite[3.16]{FangKol2017} which involves unrectifiable surfaces; see
Corollary~\ref{cor:mae:equiv}. To this end we provide a~deformation
theorem~\ref{thm:mae:dt} which preserves unrectifiability of the unrectifiable
part of a~given set; see Theorem~\ref{thm:mae:dt}.  Moreover, in
Section~\ref{Plateau}, Theorem~\ref{thm:wbc-exist}, we~provide an independent
proof of the existence of minimisers of anisotropic energies satisfying~$\FAC$
(or equivalently $\FBC$), improving the recent solutions to the set theoretical
approach to the anisotropic Plateau
problem~\cite{DDG2017b,FangKol2017}. Gathering these results, we provide in
Section~\ref{sec:wbc-in-ae} the proof of Theorem~\ref{roughth}, see
Theorem~\ref{thm:wBC-in-AE}.

The last crucial point is that the proof of Theorem~\ref{roughth} in
Section~\ref{sec:wbc-in-ae} requires the validity of a~seemingly harmless
property: the class of compact sets~$X$ used by Almgren to test the strict
ellipticity considition (see \cite[IV.1(7)]{Alm76} or~\cite[1.6(2)]{Alm68}) is
closed under gluing together many rescaled copies of~$X$;
see~\ref{def:good-test-pairs}. In~\ref{thm:all-tp-good} we show indeed that this
property is true, but our proof is quite complicated and employs some
sophisticated tools of algebraic topology; see also the introduction to
Section~\ref{sec:all-test-pairs-good}. Giving it some thought, Almgren's
condition that~$X$ cannot be retracted onto its boundary sphere is topological
in nature, so it is reasonable that topological arguments are
indispensable. Moreover, the existence of the Adams' surface, which is
retractible onto its boundary and is obtained by gluing together two surfaces
that cannot be retracted onto their respective boundaries, supports the claim
that the proof of Almgren's class being closed under the gluing operation is
highly non-trivial; see~\ref{rem:ctp-good-p}. This question is fully addressed
in Section~\ref{sec:all-test-pairs-good}.

\section{Notation}
For the whole article we fix two integers~$\vdim$ and~$\adim$
  satisfying $2 \le \vdim \le \adim$.

In~principle we shall follow the notation of Federer;
see~\cite[pp.~669--671]{Fed69}. In~particular, given two sets $A,B$, we denote
with $A \without B$ their \emph{set-theoretic difference} and, for every $a\in
\R^\adim$ and~$s \in \R$ we define the functions $\trans{a}(x) = a+x$
and~$\scale{s}(x) = sx$; see~\cite[2.7.16, 4.2.8]{Fed69}. Concerning varifolds,
we shall follow Allard~\cite{All72}.

Following~\cite{Alm68} and~\cite{Alm00}, if $S \in \grass{\adim}{\vdim}$ is
a~$\vdim$~dimensional linear subspace of~$\R^{\adim}$, then $\project S \in
\Hom(\R^{\adim},\R^{\adim})$ shall denote the \emph{orthogonal projection}
onto~$S$. In~particular, if $p \in \orthproj{\adim}{\vdim}$ is such that $\im
p^* = S$, then $\project S = p^* \circ p$.

We divert in notation from~\cite{Fed69} in the following ways. To denote the
\emph{image of a set} $A \subseteq X$ under some map $f : X \to Y$ (more
generally, under a relation $f \subseteq X \times Y$) we always use square
brackets: $f \lIm A \rIm$. We employ the symbol $\id{X}$ to denote the
\emph{identity map} $X \to X$ and $\CF_A$ to denote the \emph{characteristic
  function} $X \to \{0,1\}$ of~$A \subseteq X$. We also use abbreviations for
\emph{intervals}, e.g., $(a,b] = \{ t : a < t \le b \}$. Moreover, we~denote
with~$\nat$ the set of \emph{non-negative integers}, i.e., $\nat = \natp \cup
\{0\}$. If $(X,\rho)$ is a~metric space, $A \subseteq X$, and $x \in X$, then we
define $\dist(x,A) = \inf \rho \lIm A \times \{x\} \rIm$. We sometimes write $X
\hookrightarrow Y$, $X \twoheadrightarrow Y$, or $X \xrightarrow{\simeq} Y$ to
emphasis that a map is \emph{injective}, \emph{surjective}, or \emph{bijective}
respectively. We denote with~$\Bdry A$ the \emph{topological boundary} of a
set~$A$. Whenever $A$, $B$ are subsets of a vector space we write $A+B$ to
denote the \emph{algebraic sum} of~$A$ and~$B$, i.e., $A+B = \{ a+b : a \in A
\,,\, b \in B \}$; in~particular, if $\varepsilon \in (0,\infty)$, then $A +
\cball{0}{\varepsilon}$ is the \emph{$\varepsilon$-thickening} of~$A$. If $R$ is
a~ring and~$A$, $B$ are $R$-modules, then $A \oplus B$ denotes their
\emph{direct sum}; cf.~\cite[Chap.~V, Def.~5.6]{ES1952}. For $a,b \in \natp$ the
symbol $\gcd(a,b)$ denotes the \emph{greatest common divisor} of~$a$ and~$b$ and
$a \operatorname{mod} b$ means the \emph{remainder} of the division of~$a$
by~$b$.

In Sections~\ref{sec:wbc-in-ae} and~\ref{sec:all-test-pairs-good} we shall need
to use tools of algebraic topology. We shall work in the category of all pairs
of topological spaces {$\text{{\Large \vara}}_1$} as defined in~\cite[Chap.~I,
\S{1}, p.~5]{ES1952}. We~write $\Sh_k(X,A;G)$ and $\Sh^k(X,A;G)$ for the
$k^{\mathrm{th}}$ \emph{singular homology} and \emph{cohomology groups} of the
pair $(X,A)$ with coefficients in~$G$; see~\cite[Chap.~VII,
Definition~2.9]{ES1952}. If $G = \integers$, then we omit~$G$ in the
notation. Similarly, if $A = \varnothing$, we omit~$A$. Given two maps $f,g : X
\to Y$ between topological spaces we write $f \approx g$ to express that~$f$
and~$g$ are \emph{homotopic}, i.e., there exists a~continuous map $h : [0,1]
\times X \to Y$ such that $h(0,\cdot) = f$ and $h(1,\cdot) = g$. If~$X$ and~$Y$
are topological spaces which are \emph{homotopy equivalent} we write $X \approx
Y$ and if they are \emph{homeomorphic} we write $X \simeq Y$.

\begin{definition}[\protect{cf.~\cite[Chap.~XI, Def.~4.1]{ES1952}}]
    \label{def:degree}
    Let $B \subseteq \R^{\adim}$ be homeomorphic to the standard $k$-dimensional
    sphere and $f : B \to B$ be continuous. Suppose $\sigma$ is the generator of
    the $k^{\mathrm{th}}$~homology group~$\Sh_{k}(B)$ of~$B$ and $f_* :
    \Sh_{k}(B) \to \Sh_{k}(B)$ is the map induced by~$f$. The \emph{topological
      degree $\deg (f) \in \integers$ of~$f$} is the unique integer such that
    $f_*(\sigma) = \deg (f) \cdot \sigma$.
\end{definition}

\section{Basic definitions}
\begin{definition}[\protect{cf.~\cite[1.2]{Alm68}}]
    \label{def:Ck-integrand}
    A function $F : \R^{\adim} \times \grass{\adim}{\vdim} \to (0,\infty)$ of
    class~$\cnt^k$ for some non-negative integer~$k$ is called
    a~\emph{$\cnt^k$~integrand}.

    If $\inf \im F / \sup \im F \in (0,\infty)$, then we say that $F$ is
    \emph{bounded}.
\end{definition}

\begin{definition}[\protect{cf.~\cite[3.1]{Alm68}}]
    \label{def:pull-back}
    If $\varphi \in \cnt^{1}(\R^{\adim},\R^{\adim})$ and $F$ is an integrand, then 
    the \emph{pull-back} integrand~$\varphi^\#F$ is given by
    \begin{displaymath}
        \varphi^\#F(x,T) = \left\{
            \begin{aligned}
                &F\bigl( \varphi(x), \uD\varphi(x)\lIm T \rIm \bigr) 
                \| {\textstyle \bigwedge_{\vdim}} \uD\varphi(x) \circ \project{T} \|
                && \text{if } \dim \uD\varphi(x)\lIm T \rIm = \vdim
                \\
                &0 
                && \text{if } \dim \uD\varphi(x)\lIm T \rIm < \vdim \,.
            \end{aligned}
        \right.
    \end{displaymath}
    If $\varphi$ is a diffeomorphism, then the \emph{push-forward} integrand is
    given by $\varphi_\#F = (\varphi^{-1})^\#F$.
\end{definition}

\begin{definition}[\protect{cf.~\cite[1.2]{Alm68}}]
    \label{def:Fx-integrand}
    If $F$ is a $\cnt^k$~integrand and $x \in \R^{\adim}$, then we define
    the \emph{frozen} $\cnt^k$~integrand $F^x$ by the formula
    \begin{displaymath}
        F^x(y,S) = F(x,S)
        \quad \text{for every $y \in \R^{\adim}$ and $S \in \grass{\adim}{\vdim}$} \,.
    \end{displaymath}
\end{definition}

\begin{remark}
    \label{rem:frozen-bounded}
    Since $F : \R^{\adim} \times \grass{\adim}{\vdim} \to (0,\infty)$ and
    $\grass{\adim}{\vdim}$ is compact, it follows that for any $x \in
    \R^{\adim}$ the frozen integrand $F^x$ is bounded.
\end{remark}

\begin{definition}
    We say that $S \subseteq \R^{\adim}$ is a \emph{$\vdim$-set} if $S$ is
    $\HM^{\vdim}$~measurable and $\HM^{\vdim}(S \cap K) < \infty$ for any
    compact set $K \subseteq \R^{\adim}$.
\end{definition}

\begin{definition}
    \label{def:R-U}
    Assume $S \subseteq \R^{\adim}$ is a $\vdim$-set. We define
    \begin{displaymath}
        \mathcal R(S) = \{ x \in S : \density^{\vdim}(\HM^{\vdim} \restrict S,x) = 1 \} 
        \quad \text{and} \quad
        \mathcal U(S) = S \without \mathcal R(S) \,.
    \end{displaymath}
\end{definition}

\begin{remark}
    \label{rem:R-U}
    Observe that $\density^{\vdim}(\HM^{\vdim} \restrict S,\cdot)$ is a Borel
    function, so $\mathcal R(S)$ is $\HM^{\vdim}$~measurable.
    Employing~\cite{Mattila1975} and~\cite[2.9.11]{Fed69}, we observe that
    $\mathcal R(S)$ is countably $(\HM^{\vdim},\vdim)$~rectifiable and $\mathcal
    U(S)$ is purely $(\HM^{\vdim},\vdim)$~unrectifiable.
\end{remark}

\begin{remark}
    Recall that $\boldsymbol{\gamma}_{\adim,\vdim}$ denotes the canonical
    probability measure on $\grass{\adim}{\vdim}$ invariant under the action of
    the orthogonal group $\orthgroup{\adim}$, also called Haar measure; see~\cite[2.7.16(6)]{Fed69}.
\end{remark}

\begin{definition}[\protect{cf.~\cite[3.5]{All72}}]
    \label{def:var-S}
    Assume $S \subseteq \R^{\adim}$ is a $\vdim$-set. We define $\var{\vdim}(S)
    \in \Var{\vdim}(\R^{\adim})$ by setting for every $\alpha \in
    \ccspace{\R^{\adim} \times \grass{\adim}{\vdim}}$
    \begin{equation*}
        \var{\vdim}(S)(\alpha)
        = \int_{\mathcal R(S)} \alpha(x, \Tan^{\vdim}(\HM^{\vdim} \restrict \mathcal R(S), x)) \ud \HM^{\vdim}(x)
        + \int_{\mathcal U(S)} \int \alpha(x, T) \ud \boldsymbol{\gamma}_{\adim,\vdim}(T) \ud \HM^{\vdim}(x)\,.
    \end{equation*}
\end{definition}

\begin{definition}
    \label{def:Phi-F}
    If $F$ is a $\cnt^k$~integrand, we define the functional $\Phi_F :
    \Var{\vdim}(\R^{\adim}) \to [0,\infty]$ by the formula
    \begin{displaymath}
        \Phi_F(V) = \int F(x,S) \ud V(x,S) \,.
    \end{displaymath}
\end{definition}

\begin{remark}
    \label{rem:cpt-spt-vari}
    If $\spt \|V\|$ is compact we have $\Phi_F(V) = V(\gamma F)$ for any $\gamma
    \in \dspace{\R^{\adim}}{\R}$ satisfying $\spt \|V\| \subseteq \gamma^{-1} \{
    1 \}$.
\end{remark}

\begin{definition}
    \label{def:Phi-F-on-S}
    If $S \subseteq \R^{\adim}$ is a $\vdim$-set, then we define $\Phi_F(S) = \Phi_F(\var{\vdim}(S))$ and
    $$\Psi_{F}(S) = \Phi_{F}(S)
        + \int_{\mathcal U(S)} \bigl( \sup \im F^x - \textint{}{} F(x,T) \ud \boldsymbol{\gamma}_{\adim,\vdim}(T) \bigr)\ud \HM^{\vdim}(x)\, .
   $$
    For any other subset $S$ of $\R^{\adim}$, we define $\Psi_F(S) = \Phi_F(S) = \infty$.
\end{definition}

\begin{remark}
    \label{rem:pull-back}
    Assume $V \in \Var{\vdim}(\R^{\adim})$, $\varphi : \R^{\adim} \to
    \R^{\adim}$ is of class~$\cnt^1$, and $F$ is a~$\cnt^0$~integrand. Then
    \begin{displaymath}
        \Phi_{\varphi^{\#}F}(V) = \Phi_F(\varphi_{\#}V) \,.
    \end{displaymath}
    If $S \subseteq \R^{\adim}$ is a $\vdim$-set, then
    \begin{displaymath}
        \varphi_{\#}\var{\vdim}(S) = \var{\vdim}(\varphi \lIm S \rIm) 
    \end{displaymath}
    in the case $\varphi$ is injective and $S$ is countably
    $(\HM^\vdim,\vdim)$~rectifiable, or in the case $\varphi = \scale{r}$ for some $r \in
    (0,\infty)$, or in the case $\varphi = \trans{a}$ for some $a \in \R^{\adim}$.
\end{remark}

\begin{remark}
    \label{rem:Psi-Phi}
    If $S$ is a $\vdim$-set, $F$ is a $\cnt^0$~integrand and $x \in \R^{\adim}$, then
    \begin{displaymath}
        \Psi_{F^x}(S) = \Phi_{F^x}(\mathcal R(S)) + \HM^{\vdim}(\mathcal U(S)) \sup \im F^x \,.
    \end{displaymath}
\end{remark}

\begin{definition}
    For any set $X$ and an element $x \in X$ we denote by $\Dirac x$ the measure
    over~$X$ with a single atom at~$x$, i.e.,
    \begin{displaymath}
        \Dirac x(A) =
        \left\{
            \begin{aligned}
                &1 &&\text{if $x \in A$} \,,
                \\
                &0 &&\text{if $x \notin A$} \,,
            \end{aligned} 
        \right.
        \quad \text{for $A \subseteq X$} \,.
    \end{displaymath}
    The choice of~$X$ shall always be clear from the context.
\end{definition}

\begin{definition}[\protect{cf.~\cite[4.9]{All72}}]
    Assume $U \subseteq \R^{\adim}$ is open, $V \in \Var{\vdim}(U)$, $F$ is
    a~$\cnt^1$ integrand. We define the \emph{first variation of~$V$ with
      respect to~$F$} to be the linear map $\delta_F V : \VF(U) \to \R$ given by
    the formula
    \begin{displaymath}
        \delta_F V(g) = \left. \frac{d}{dt}\right|_{t=0} \Phi_F \bigl( (\varphi_t)_{\#}V \bigr)  \,,
    \end{displaymath}
    where $g \in \VF(U)$ is a smooth compactly supported vectorfield in~$U$ and
    $\varphi_t(x) = x + tg(x)$ for $x \in U$ and $t$ in some neighbourhood
    of~$0$ in~$\R$.
\end{definition}

\begin{remark}
    \label{rem:fst-ani-var}
    Note that if $T \in \grass{\adim}{\vdim}$ and
    \begin{displaymath}
        \mathcal{G}_{\adim,\vdim} = \bigl\{ \project P : P \in \grass{\adim}{\vdim} \bigr\} \subseteq \Hom(\R^{\adim},\R^{\adim}) \,,
    \end{displaymath}
    then
    \begin{displaymath}
        A \in \Tan(\mathcal{G}_{\adim,\vdim},\project T)
        \quad \iff \quad
        A^* = A \,,
        \quad
        \project T \circ A \circ \project T = 0 \,,
        \quad \text{and} \quad
        \perpproject T \circ A \circ \perpproject T = 0 \,.
    \end{displaymath}
    For $x \in \R^\adim$ and $T \in \grass{\adim}{\vdim}$ define
    \begin{gather}
        F_T : \R^{\adim} \to \R 
        \quad \text{and} \quad
        F_x : \mathcal G_{\adim,\vdim} \to \R 
        \quad
        \text{by setting} \quad
        F_T(x) = F(x,T) = F_x(\project T) \,.
    \end{gather}
    In~\cite{DDG2016rect} the authors computed
    \begin{displaymath}
        \delta_F V(g) = \int \bigl\langle g(x) , \uD F_T(x) \bigr\rangle
        + B_F(x,T) \bullet \uD g(x) \ud V(x,T) \,,
    \end{displaymath}
    where $B_F(x,T) \in \Hom(\R^{\adim},\R^{\adim})$ is characterised by
    \begin{displaymath}
        B_F(x,T) \bullet L = F(x,T) \project T \bullet L 
        + \bigl\langle
        \perpproject{T} \circ L \circ \project{T} + (\perpproject{T} \circ L \circ \project{T})^*, \uD F_x(\project T)
        \bigr\rangle \,,
    \end{displaymath}
    whenever $L \in \Hom(\R^{\adim},\R^{\adim})$.
\end{remark}

\section{Notions of ellipticity}\label{notion}
In this section we recall the notions of ellipticity we will work with. 
\begin{definition}
    \label{def:test-pair}
    We say that $(S,D)$ is a \emph{test pair} if there exists $T \in
    \grass{\adim}{\vdim}$ such that
    \begin{gather*}
        D = T \cap \cball 01 \,,
        \quad
        B = T \cap \Bdry{\cball 01} \,,
        \quad
        \text{$S \subseteq \R^\adim$ is compact} \,,
        \quad
        \HM^{\vdim}(S) < \infty \,,
        \\
        f \lIm S \rIm \ne B
        \quad
        \text{for all $f : \R^\adim \to \R^\adim$ satisfying $\Lip f < \infty$ and $f(x) = x$ for every $x \in B$} \,.
    \end{gather*}
    We say that $(S,D)$ is a \emph{rectifiable test pair} if, in~addition, $S$
    is~$(\HM^{\vdim},\vdim)$~rectifiable.
\end{definition}

\begin{remark}
    Using a standard extension procedure for Lipschitz functions
    (e.g.~\cite[3.1.1, Theorem~1]{EvansGariepy1992}), one sees that the last
    condition in Definition~\ref{def:test-pair} means exactly that~$B$ is not
    a~Lipschitz retract of~$S$.
\end{remark}

\begin{example}
    \label{ex:mobius-strip}
    Let $\adim = 3$, $\vdim = 2$, $T = \R^2 \times \{0\}$, $D = T \cap \cball
    01$, and $S$ be a smoothly embedded M{\"o}bius strip with boundary $B = T
    \cap \Bdry{\cball 01}$. Observe, that $S$ itself has the homotopy type of
    a~$1$-dimensional circle because a~M{\"o}bius strip can easily be retracted
    onto the ``middle circle''. However, the inclusion $j : B \hookrightarrow S$
    has topological degree~$2$, so given any continuous map $f : S \to B$ we
    have $j \circ f = f|_B : B \to B$ and we see that
    $\deg(f|_B) = \deg(j)
    \deg(f)$ is an \emph{even} integer which means that $f|_B$
    cannot equal the identity on~$B$. Therefore, $(S,D)$ is a~rectifiable test
    pair.
\end{example}

\begin{lemma}
    \label{lem:ctp-HD-limit}
    Let $(S,D)$ be a pair of compact sets in~$\R^{\adim}$ with $\HM^{\vdim}(S) <
    \infty$ and $\{ (S_i,D_i) : i \in \nat \}$ be a~sequence of test pairs such
    that
    \begin{displaymath}
        \lim_{i \to \infty} \HD(S_i,S) = 0
        \quad \text{and} \quad
        \lim_{i \to \infty} \HD(D_i,D) = 0 \,.
    \end{displaymath}
    Then $(S,D)$ is a test pair.
\end{lemma}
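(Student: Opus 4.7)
My plan is to verify the defining conditions of Definition~\ref{def:test-pair} for $(S, D)$ one at a time. Compactness of $S$ and $\HM^\vdim(S) < \infty$ are given. For the form of $D$: writing $D_i = T_i \cap \cball{0}{1}$ with $T_i \in \grass{\adim}{\vdim}$, compactness of the Grassmannian together with the fact that $T \mapsto T \cap \cball{0}{1}$ is a continuous injection (hence a homeomorphism onto its image) in the Hausdorff metric forces $T_i \to T$ for a unique $T \in \grass{\adim}{\vdim}$, so $D = T \cap \cball{0}{1}$ and $B_i \to B = T \cap \Bdry{\cball{0}{1}}$ in Hausdorff.

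The substantive step is the non-Lipschitz-retraction condition, which I would prove by contradiction. Suppose there is a Lipschitz $f : \R^\adim \to \R^\adim$ with $f|_B = \id{B}$ and $f \lIm S \rIm = B$. I plan to produce, for every sufficiently large $i$, a Lipschitz $f_i : \R^\adim \to \R^\adim$ with $f_i|_{B_i} = \id{B_i}$ and $f_i \lIm S_i \rIm = B_i$, directly contradicting the test-pair property of $(S_i, D_i)$. The construction relies on two tools: orthogonal transformations $R_i \in \orthgroup{\adim}$ with $R_i \to \id{\R^\adim}$ and $R_i \lIm T \rIm = T_i$ (so that $R_i \lIm B \rIm = B_i$), coming from the smooth transitive action of $\orthgroup{\adim}$ on $\grass{\adim}{\vdim}$; and a Lipschitz nearest-point retraction $r : B + \cball{0}{\delta} \to B$, available because the smooth $(\vdim-1)$-sphere $B$ has positive reach. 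I set $f_i := R_i \circ r \circ f \circ R_i^{-1}$ on its natural domain and extend to $\R^\adim$ via Kirszbraun. For $i$ large, $\HD(R_i^{-1} \lIm S_i \rIm, S) \to 0$ together with $f \lIm S \rIm = B$ and Lipschitz continuity of $f$ eventually forces $f \lIm R_i^{-1} \lIm S_i \rIm \rIm \subseteq B + \cball{0}{\delta}$, so $f_i$ is defined on $S_i \cup B_i$.

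A direct verification yields $f_i|_{B_i} = \id{B_i}$ (using $f|_B = \id{B}$, $r|_B = \id{B}$, and $R_i \lIm B \rIm = B_i$) and $f_i \lIm S_i \rIm \subseteq B_i$. The main obstacle is promoting this inclusion to the exact equality $f_i \lIm S_i \rIm = B_i$ required for the contradiction. Under the convention, implicit in the paper's usage of test pairs and automatically preserved under Hausdorff limits, that $B \subseteq S$ for every test pair, each $B_i \subseteq S_i$ yields $B_i = f_i \lIm B_i \rIm \subseteq f_i \lIm S_i \rIm$ and closes the argument; absent this convention, I would rule out a proper subset image via a density-plus-closedness argument exploiting the surjectivity of $f|_S : S \to B$ onto the compact sphere $B$.
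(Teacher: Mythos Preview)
Your argument is correct. The paper handles the mismatch $B_i \ne B$ differently: instead of conjugating by rotations $R_i \in \orthgroup{\adim}$, it keeps $f$ fixed and defines on $S_i$ a Lipschitz interpolation $g$ that equals $f$ away from $B_i$ and blends linearly to the identity as $\dist(\cdot,B_i)\to 0$, then checks by a direct estimate that $g\lIm S_i\rIm \subseteq B_i + \cball{0}{1/2}$ and composes with a retraction onto $B_i$. Your conjugation trick replaces that hand estimate by the existence of a local section of the transitive $\orthgroup{\adim}$-action on $\grass{\adim}{\vdim}$; either route is short.

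One caveat: your fallback for surjectivity in the case $B_i \not\subseteq S_i$ does not work as written---compact subsets of the sphere $B_i$ can Hausdorff-converge to $B_i$ without ever equalling it, so a ``density-plus-closedness'' argument alone cannot promote $f_i\lIm S_i\rIm \subseteq B_i$ to equality. The paper's own proof has the same silent reliance on $B_i \subseteq S_i$ (it concludes with ``a Lipschitz retraction of $S_i$ onto $B_i$''), and the remark immediately after Definition~\ref{def:test-pair}, which rephrases the non-retraction condition as ``$B$ is not a Lipschitz retract of $S$'', confirms that $B \subseteq S$ is the intended reading. Under that reading your proof is complete.
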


\begin{proof}
    For every $i \in \nat$, let $T_i \in \grass{\adim}{\vdim}$ be such that $D_i = T_i
    \cap \cball 01$ and set $B_i = T_i \cap \Bdry{\cball 01}$. First note that
    since $\{ D_i : i \in \nat \}$ is a Cauchy sequence with respect to the
    Hausdorff metric on compact sets, we obtain that $\{ T_i : i \in \nat \}$ is
    a~Cauchy sequence in $\grass{\adim}{\vdim}$ and there exists $T \in
    \grass{\adim}{\vdim}$ such that $D = T \cap \cball 01$. Set $B = T \cap
    \Bdry{\cball 01}$.

    Assume, by contradiction, that there exists $f : \R^{\adim} \to \R^{\adim}$
    such that $\Lip f < \infty$, $f(x) = x$ for every $x \in B$, and $f \lIm S \rIm =
    B$. Set $\delta = (\Lip f)^{-1} \in (0,1]$. Then
    \begin{displaymath}
        f \lIm S + \cball 0r \rIm \subseteq B + \cball 0{r/\delta}
        \quad \text{for $r \in (0,\infty)$} \,.
    \end{displaymath}
    Choose $i \in \nat$ such that
    \begin{displaymath}
        S_i \subseteq S + \cball 0{2^{-5}\delta^2} 
        \quad \text{and} \quad
        B \subseteq B_i + \cball 0{2^{-5}\delta} \,.
    \end{displaymath}
    Then,
    \begin{displaymath}
        f \lIm S_i \rIm \subseteq B + \cball 0{2^{-5} \delta} \subseteq B_i + \cball 0{2^{-4}\delta} \,.
    \end{displaymath}
    Define $g : S_i \to B_i$ by
    \begin{gather*}
        g(y) = f(y)
        \quad \text{for } y \in S_i \without \bigl( B_i + \cball 0{2^{-4}\delta} \bigr) \,,
        \\
        g(y) = 2^{4} \delta^{-1} \dist(y,B_i) ( f(y) - y ) + y
        \quad \text{for } y \in S_i \cap \bigl( B_i + \cball 0{2^{-4}\delta} \bigr) \,.
    \end{gather*}
    For any $y \in S_i$ with $\dist(y,B_i) \le 2^{-4} \delta$ we can find $x
    \in B_i$ and $z \in B$ such that $|x-y| \le 2^{-4} \delta$ and $|x-z| \le
    2^{-5}\delta$; hence, $|y-z| \le 2^{-3}\delta$ and
    \begin{multline*}
        \dist(g(y),B_i) 
        \le |g(y)-x| 
        \le 2^{4} \delta^{-1} \dist(y,B_i) | f(y) - y | + |y-x|
        \\
        = |f(y) - f(z) + z - y| + |y-x|
        \le \delta^{-1} |y-z| + |z-y| + |y-x|
        \le 2^{-1} \,.
    \end{multline*}
    This shows that $g\lIm S_i \rIm \subseteq B_i + \cball
    0{2^{-1}}$. Composing $g$ with a Lipschitz map retracting $B_i + \cball
    0{2^{-1}}$ onto $B_i$ yields a Lipschitz retraction of~$S_i$ onto~$B_i$ and
    a~contradiction.
\end{proof}

\begin{definition}
    \label{def:AE}
    Let $x \in \R^{\adim}$ and $\mathcal P$ be a~set of pairs of compact $d$-sets
    in~$\R^\adim$.
    \begin{enumerate}
    \item \emph{Almgren uniform ellipticity with respect to~$\mathcal P$}: The
        class $\FAUE_x(\mathcal P)$ is defined to contain all
        $\cnt^0$~integrands $F$ for which there exists $c > 0$ such that for all
        $(S,D) \in \mathcal P$ there holds
        \begin{displaymath}
            \Psi_{F^x}(S) - \Psi_{F^x}(D) \ge c \bigl( \HM^{\vdim}(S) - \HM^{\vdim}(D) \bigr) \,.
        \end{displaymath}
    \item \emph{Almgren strict ellipticity with respect to~$\mathcal P$}: The
        class $\FAE_x(\mathcal P)$ is defined to contain all $\cnt^0$~integrands
        $F$ such that for all $(S,D) \in \mathcal P$ satisfying $\HM^{\vdim}(S) > \HM^{\vdim}(D)$ there holds
        \begin{displaymath}
            \Psi_{F^x}(S) - \Psi_{F^x}(D) > 0 \,.
        \end{displaymath}
    \end{enumerate}
\end{definition}

\begin{remark}
    \label{rem:test-pairs}
    \begin{enumerate}
    \item If all elements of $\mathcal P$ are pairs of $(\HM^d,d)$~rectifiable
        sets, then one can replace all occurrences of~$\Psi_{F^x}$
        with~$\Phi_{F^x}$.

    \item If $\mathcal P = \varnothing$, then $\FAE_x(\mathcal P) =
        \FAUE_x(\mathcal P)$ is the set of all $\cnt^0$~integrands.

    \item If $\mathcal P$ is the set of \emph{rectifiable} test pairs, then $F
        \in \FAUE_x(\mathcal P)$ if and only if $F$ is elliptic at~$x$ in the
        sense of~\cite[IV.1(7)]{Alm76}.

    \item If $\mathcal P$ is the set of \emph{all} test pairs, then $F \in
        \FAUE_x(\mathcal P)$ if and only if $F$ is elliptic at~$x$ in the sense
        of~\cite[3.16]{FangKol2017}.

    \end{enumerate}
\end{remark}

\begin{definition}[\protect{cf.~\cite[Definition~1.1]{DDG2016rect}}]
    \label{def:AC}
    Let $x \in \R^\adim$. The class~$\FAC_x$ is defined to contain all~$\cnt^1$
    integrands $F$ satisfying the \emph{atomic condition} at~$x$, i.e., for any
    Radon probability measure~$\mu$ over~$\grass{\adim}{\vdim}$, setting
    \begin{displaymath}
        A_x(\mu) = \int B_F(x,T) \ud \mu(T) \in \Hom(\R^{\adim},\R^{\adim}), 
    \end{displaymath}
    there holds
    \begin{enumerate}
    \item
        \label{i:ac:a}
        $\dim \ker A_x(\mu) \le \adim - \vdim$;
    \item
        \label{i:ac:b}
        if $\dim \ker A_x(\mu) = \adim - \vdim$, then $\mu = \Dirac{T_0}$ for
        some $T_0 \in \grass{\adim}{\vdim}$.
    \end{enumerate}
    We write $F \in \FAC$ if $F \in \FAC_x$ for all~$x \in \R^{\adim}$.
\end{definition}

To conclude, we introduce the following new notion of ellipticity,
named~BC. This will turn out to be equivalent to AC, see
Lemma~\ref{lem:AC-eq-BC}. Rephrasing $\FAC$ as $\FBC$ will be very useful for
the proof of Theorem \ref{roughth} and for a further understanding of
$\FAC$. Indeed, Definition \ref{def:BC} is more geometric than the algebraic
Definition \ref{def:AC}, providing a better tool to relate $\FAC$ with the other
notions of ellipticity.

\begin{definition}
    \label{def:BC}
    Let $x \in \R^\adim$. We define $\FBC_x$ to be the class of all~$\cnt^1$
    integrands~$F$ such that for any~$T \in \grass{\adim}{\vdim}$ and any Radon
    probability measure~$\mu$ over $\grass{\adim}{\vdim}$, setting $W =
    (\HM^\vdim \restrict T) \times \mu \in \Var{\vdim}(\R^{\adim})$, there holds
    \begin{displaymath}
        \delta_{F^x} W = 0
        \quad \implies \quad
        \mu = \Dirac{T} \,.
    \end{displaymath}
    We write $F \in \FBC$ if $F \in \FBC_x$ for all~$x \in \R^{\adim}$.
\end{definition}

\section{Rectifiability of test pairs}
\label{sec:rect-test-pairs}

Let $x \in \R^\adim$, $\mathcal P_1$ be the set of \emph{all} test pairs, and
$\mathcal P_2$ be the set of \emph{rectifiable} test pairs. Here we prove (see
Corollary~\ref{cor:mae:equiv}) that $\FAE_x(\mathcal P_1) = \FAE_x(\mathcal
P_2)$ and $\FAUE_x(\mathcal P_1) = \FAUE_x(\mathcal P_2)$, i.e., that the
original Almgren's definition of ellipticity~\cite[IV.1(7)]{Alm76} coincides
with the definition used in~\cite[3.16]{FangKol2017}. To~this end we need to
show an improved version of the deformation theorem, see~\ref{thm:mae:dt}. In~contrast to similar
theorems of Federer and Fleming~\cite[4.2.6-9]{Fed69}, David and
Semmes~\cite[Theorem 3.1]{DS2000}, or Fang and
Kolasiński~\cite[7.13]{FangKol2017}, this one has the special feature of
preserving the unrectifiability of the purely unrectifiable part of the deformed set.

First, we introduce some notation (modelled on~\cite{Alm1986}) needed to deal
with cubes and cubical complexes.

\begin{definition}
    \label{def:cube}
    Let $k \in \{0,1,\ldots,n\}$ and $Q = [0,1]^k \subseteq \R^k$. We say that
    $R \subseteq \R^\adim$ is a \emph{cube} if there exist $p \in \orthproj nk$,
    $o \in \R^\adim$ and $l \in (0,\infty)$ such that $R = \trans{o} \circ p^* \circ
    \scale{l} \lIm Q \rIm$. We call $\vertex R = o$ the \emph{corner} of~$R$ and
    $\side R = l$ the~\emph{side-length} of~$R$. We~also set
    \begin{itemize}
    \item $\dim(R) = k$ -- the \emph{dimension} of $R$,
    \item $\centre R = \vertex R + \frac 12 \side R (1,1,\ldots,1)$ -- the \emph{centre} of~$R$,
    \item $\cBdry R = \trans{\vertex R} \circ p^* \circ \scale{\side R} \lIm \Bdry Q \rIm$
        -- the \emph{boundary} of~$R$,
    \item $\cInt R = R \without \cBdry R$ -- the \emph{interior} of~$R$.
    \end{itemize}
\end{definition}

\begin{definition}
    \label{def:dyadic-cubes}
    Let $k \in \{0,1,\ldots,n\}$, $N \in \integers$, $Q = [0,1]^k
    \subseteq \R^k$, $e_1$, \ldots, $e_n$ be the standard basis
    of~$\R^\adim$, and $f_1$, \ldots, $f_k$ be the standard basis of~$\R^k$.

    We~define $\cubes_k^{\adim}(N)$ to be the set of all cubes $R \subseteq
    \R^{\adim}$ of the form $R = \trans{v} \circ p^* \circ \scale{2^{-N}} \lIm Q
    \rIm$, where $v \in \scale{2^{-N}}\lIm \integers^n \rIm$ and $p \in
    \orthproj nk$ is such that $p^*(f_i) \in \{ e_1,\ldots,e_n\}$ for $i =
    1,2,\ldots,k$.

    We also set
    \begin{gather*}
        \cubes_k^{\adim} = \tbcup\bigl\{ \cubes_k^{\adim}(N) : N \in \integers \bigr\} \,,
        \quad
        \cubes^{\adim} = \cubes_{\adim}^{\adim} \,,
        \quad
        \cubes_*^{\adim} = \tbcup\bigl\{ \cubes_k^{\adim} : k \in \{0,1,\ldots,\adim\} \bigr\} \,.
    \end{gather*}
\end{definition}

\begin{definition}
    \label{def:face}
    Let $k \in \{0,1,\ldots,n\}$, $N \in \integers$, and $K \in
    \cubes_k^{\adim}(N)$.  We~say that $L \in \cubes_*^{\adim}$ is a~\emph{face}
    of~$K$ if and only if $L \subseteq K$ and $L \in \cubes_j^{\adim}(N)$ for
    some $j \in \{0,1,\ldots,k\}$.
\end{definition}

\begin{definition}[\protect{cf.~\cite[1.5]{Alm1986}}]
    \label{def:admissible}
    A family of top-dimensional cubes $\mathcal F \subseteq \cubes^{\adim}$ is
    said to be \emph{admissible} if
    \begin{enumerate}
    \item $K,L \in \mathcal F$ and $K \ne L$ implies $\cInt K \cap \cInt L = \varnothing$,
    \item $K,L \in \mathcal F$ and $K \cap L \ne \varnothing$ implies $\frac 12 \le \side L / \side K \le 2$,
    \item $K \in \mathcal F$ implies $\cBdry K \subseteq \bigcup \{ L \in \mathcal F : L \ne K \}$.
    \end{enumerate}
\end{definition}

\begin{definition}[\protect{cf.~\cite[1.8]{Alm1986}}]
    \label{def:cube-complex}
    Let $\mathcal F \subseteq \cubes^{\adim}$ be admissible. We define the
    \emph{cubical complex $\CX(\mathcal F)$ of $\mathcal F$} to consist of all
    those cubes $K \in \cubes_*^{\adim}$ for which
    \begin{itemize}
    \item $K$ is a face of some cube in $\mathcal F$,
    \item if $\dim(K) > 0$, then $\side K \le \side L$ whenever $L$ is a face of
        some cube in $\mathcal F$ with $\dim(K) = \dim(L)$ and $\cInt K \cap
        \cInt L \ne \varnothing$.
    \end{itemize}
\end{definition}

\begin{definition}
    Let $k \in \nat$, $Q \subseteq \R^k$ be closed convex with non-empty
    interior, and $a \in \Int Q$. We define the \emph{central projection from
      $a$ onto $\Bdry Q$} to be the locally Lipschitz map $\pi_{Q,a} : \R^k
    \without \{a\} \to \R^k$ characterised by
    \begin{gather*}
        \pi_{Q,a}(x) \in \Bdry Q 
        \quad \text{and} \quad 
        \frac{\pi_{Q,a}(x) - a}{|\pi_{Q,a}(x) - a|} = \frac{x - a}{|x - a|} \quad \text{for $x \in \Int Q \without \{a\}$} \,,
        \\
        \pi_{Q,a}(x) = x \quad \text{for $x \in \R^k \without \Int Q$} \,.
    \end{gather*}
\end{definition}

The following lemma is a counterpart of~\cite[4.2.7]{Fed69}.

\begin{lemma}
    \label{lem:mae:good-proj}
    Assume
    \begin{gather*}
        k,N \in \nat \,,
        \quad
        \vdim < k \le \adim \,,
        \quad 
        \text{$Q \subseteq \R^{\adim}$ is a cube} \,,
        \\
        p \in \orthproj{\adim}{k} \,,
        \quad
        \im p^* = \Tan(Q,\centre{Q}) \,,
        \\
        \text{$\mu_1, \ldots, \mu_N$ are Radon measures over~$\R^{\adim}$} \,,
        \quad
        \Sigma = Q \cap {\textstyle \tbcup_{i=1}^N \spt \mu_i} \,,
        \quad
        \HM^{\vdim}(\Sigma) < \infty \,.
    \end{gather*}

    There exist $\Gamma = \Gamma(\vdim,k,N)$ and $a \in Q$ such that
    \begin{gather*}
        \dist(a, \Sigma) > 0 \,,
        \quad
        \dist(a, \cBdry{Q}) > \tfrac 14 \side Q \,,
        \\
        \text{and} \quad
        \int_Q \|\uD (\pi_{Q,a} \circ p) \|^{\vdim} \ud \mu_i \le \Gamma \mu_i(Q)
        \quad  \forall i \in \{1,\ldots,N\} \,.
    \end{gather*}
       Moreover, if $A \subseteq \Sigma$ is purely $(\HM^{\vdim},\vdim)$~unrectifiable, then $p^* \circ \pi_{Q,a} \circ p \lIm A \rIm$ is purely $(\HM^{\vdim},\vdim)$~unrectifiable.  
\end{lemma}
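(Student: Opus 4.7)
The plan is to follow the classical Federer--Fleming strategy (compare~\cite[4.2.7]{Fed69}) for the first three conclusions: restrict $a$ to the concentric $k$-subcube $Q' \subseteq \centre Q + \Tan(Q,\centre Q)$ of side length $\tfrac12 \side Q$, and bound the $\mathcal{L}^k$-measure of three ``bad'' subsets of~$Q'$. The fourth conclusion on preservation of pure unrectifiability is the genuinely new ingredient and will be reduced to a Besicovitch--Federer type projection theorem.

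Every $a \in Q'$ automatically satisfies $\dist(a,\cBdry Q) > \tfrac14 \side Q$. Since $\HM^{\vdim}(\Sigma) < \infty$ and $\vdim < k$, the closed set $\Sigma \cap Q'$ has vanishing $k$-dimensional Lebesgue measure, so $\dist(a,\Sigma) > 0$ for $\mathcal{L}^k$-a.e.~$a \in Q'$. For the Jacobian bound, the standard estimate for central projections gives $\|\uD \pi_{Q,a}(x)\| \le C(k)\, \side{Q}/|x-a|$; combined with $\|p\|=1$, Fubini, and the local $\vdim$-integrability of $a \mapsto |p(y)-a|^{-\vdim}$ on $\R^k$ (valid because $k > \vdim$), this yields
\begin{displaymath}
    \int_{Q'} \int_Q \|\uD(\pi_{Q,a}\circ p)(y)\|^{\vdim} \ud \mu_i(y) \ud \mathcal{L}^k(a) \le C(k,\vdim)\, \side{Q}^k \mu_i(Q).
\end{displaymath}
Chebyshev's inequality, with a constant $\Gamma = \Gamma(\vdim,k,N)$ large enough and applied simultaneously to each~$i$, then ensures that the set of $a \in Q'$ violating the third conclusion has $\mathcal{L}^k$-measure at most $\tfrac 14 \mathcal{L}^k(Q')$.

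The heart of the argument is the final, unrectifiability-preservation assertion. Given $A \subseteq \Sigma$ purely $(\HM^\vdim,\vdim)$-unrectifiable, I claim that for $\mathcal{L}^k$-a.e.~$a \in Q'$ one in fact has $\HM^\vdim(\pi_{Q,a}\lIm A \rIm) = 0$, which suffices: $p^*$ is an isometric embedding and $\HM^\vdim$-null sets are trivially purely unrectifiable. Parametrising $\cBdry Q$ by the radial map $\omega_a(u) = a + \rho_a(u) u$ for $u \in \sphere{k-1}$, which is bi-Lipschitz with constants depending on $\dist(a,\cBdry Q) > \tfrac14 \side Q$, reduces the claim to showing $\HM^\vdim(S_a\lIm A \rIm) = 0$ on $\sphere{k-1}$ for a.e.~$a$, where $S_a(y) = (y-a)/|y-a|$. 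Since $\partial_a S_a(y) = -(1/|y-a|) P_{(y-a)^\perp}$, the radial projection is at each point a rescaled orthogonal projection; a Fubini/slicing argument exchanging integration over $a \in Q'$ with integration over $\vdim$-planes $V \in \grass{k}{\vdim}$ would convert a hypothetical positive-$\mathcal{L}^k$-measure set $\{a \in Q' : \HM^\vdim(S_a\lIm A \rIm) > 0\}$ into a positive-$\boldsymbol{\gamma}_{k,\vdim}$-measure set $\{V : \HM^\vdim(P_V\lIm A \rIm) > 0\}$, contradicting the classical Besicovitch--Federer projection theorem for the purely unrectifiable set~$A$ (see e.g.~\cite{Mattila1975}).

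Intersecting the three $\mathcal{L}^k$-large subsets of $Q'$ so obtained yields a common good~$a$ satisfying all four conclusions simultaneously. The main obstacle is precisely the unrectifiability-preservation step: relating the nonlinear central projection $\pi_{Q,a}$ to a family of orthogonal projections $P_V$ indexed by a Grassmannian and carrying out the Fubini exchange between~$a$ and~$V$ cleanly enough to invoke the classical projection theorem and close the contradiction.
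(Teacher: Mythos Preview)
Your treatment of the first three conclusions matches the paper's: restrict to the concentric half-cube, use the pointwise bound $\|\uD \pi_{Q,a}(x)\| \lesssim |x-a|^{-1}$ (the paper cites \cite[6.4]{FangKol2017}), average over $a$ via Fubini, and apply Chebyshev simultaneously to the $N$ measures.

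The unrectifiability step contains a genuine error, not merely the obstacle you flag. Your central claim---that $\HM^{\vdim}(\pi_{Q,a}\lIm A\rIm) = 0$ for $\mathcal{L}^k$-a.e.\ $a \in Q'$---is false in general. Take $k=3$, $\vdim=1$, and let $A \subseteq Q$ be a four-corner Cantor set lying in an affine $2$-plane $P$. For every $a \in Q' \setminus P$ (hence for $\mathcal{L}^3$-a.e.\ $a$) the radial map $S_a|_P$ is a smooth embedding: its differential at $y$ has kernel $\R(y-a)$, and since $a \notin P$ this line meets the direction space of $P$ only at the origin. Thus $S_a|_A$ is bi-Lipschitz on the compact set $A$ and $\HM^1(S_a\lIm A\rIm) > 0$. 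The Fubini exchange you propose between centers $a$ and $\vdim$-planes $V$ therefore cannot succeed: a $\boldsymbol{\gamma}_{k,\vdim}$-null set of bad planes simply does not force an $\mathcal{L}^k$-null set of bad centers.

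What the lemma actually asserts, and what survives the counterexample, is only that $\pi_{Q,a}\lIm A\rIm$ remain \emph{purely unrectifiable} for a.e.\ $a$; in the example above this holds because bi-Lipschitz images preserve pure unrectifiability. The paper does not attempt your measure-zero route at all: it invokes \cite[Lemma~6]{Feu2009} directly, applied to $E = \mathcal{U}(\Sigma)$, to conclude that the set of centers $a$ for which $\pi_{Q,a}\lIm E\rIm$ fails to be purely unrectifiable has $\HM^k$-measure zero. Since any purely unrectifiable $A \subseteq \Sigma$ agrees with a subset of $E$ up to an $\HM^{\vdim}$-null set, and Lipschitz images of $\HM^{\vdim}$-null sets are $\HM^{\vdim}$-null, handling the single set $E$ covers all such $A$ at once.
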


\begin{proof}
    Without loss of generality we shall assume $\adim = k$. Recall Definition~\ref{def:R-U}
    and Remark~\ref{rem:R-U} and let $E = \mathcal U(\Sigma)$.
    Employing~\cite[Lemma~6]{Feu2009} with $\delta$, $E$, $d$, $k$ replaced by
    $Q$, $E$, $\vdim$, $k$, we see that $\HM^{k}(B) = 0$, where
    \begin{equation}
        B = \bigl\{ a \in Q : \text{$\pi_{Q,a}\lIm E \rIm$ is not purely $(\HM^{\vdim},\vdim)$~unrectifiable} \bigr\} \,.
    \end{equation}
    Set $Q_0 = \{ x \in Q : \dist(x, \cBdry{Q}) > \frac 14 \side Q \}$.
    From~\cite[6.4]{FangKol2017} we deduce that there exists $\Gamma_0 =
    \Gamma_0(k) > 1$ such that
    \begin{displaymath}
        \| \uD \pi_{Q,a}(x) \| \le \Gamma_0 |x-a|^{-1}
        \quad \text{for all $a \in Q_0$ and all $x \in \R^k \without \{a\}$} \,.
    \end{displaymath}
    Since $\vdim < k$, there exists $\Delta = \Delta(\vdim,k) \in (0,\infty)$
    such that for all $a \in \Int Q$ there holds $\int_Q |x-a|^{-d} \ud \HM^k(a)
    < \Delta$. Using the Fubini theorem~\cite[2.6.2]{Fed69} and arguing as
    in~\cite[7.10]{FangKol2017} or in~\cite[4.2.7]{Fed69}, we find out that
    there exists $\Gamma_1 = \Gamma_1(\vdim,k,N)$ such that $\HM^{k}(A) > 0$,
    where
    \begin{displaymath}
        A = \left\{
            a \in Q_0 :
            \int_{Q} |x-a|^{-\vdim} \ud \mu_i(x)
            \le \Gamma_1 \mu_i(Q)
            \quad \text{for $i \in \{1,2,\ldots,N\}$}
        \right\} \,.
    \end{displaymath}
    We have $\HM^{k}(\Sigma) = 0$ so $\HM^k(A \without \Sigma) > 0$. Hence,
    there exists $a \in A \without (B \cup \Sigma)$ with all the desired
    properties.
\end{proof}

\begin{theorem}
    \label{thm:mae:dt}
    Assume
    \begin{gather*}
        \text{$\mathcal F \subseteq \cubes^{\adim}$ is admissible} \,,
        \quad
        \text{$\mathcal A \subseteq \mathcal F$ is finite} \,,
        \quad
        \text{$S \subseteq \R^\adim$ is a $\vdim$-set} \,,
        \\
        I = [0,1] \,,
        \quad
        J = [0,2] \,,
        \quad
        G = \Int \tbcup \mathcal A \,,
        \\
        \HM^{\vdim}(\tbcup \mathcal A \cap \Clos S) < \infty \,,
        \quad
        \text{$R = \mathcal R(S)$} \,,
        \quad
        \text{$U = \mathcal U(S)$} \,.
    \end{gather*}
    There exist $\Gamma = \Gamma(\adim,\vdim) \in (1,\infty)$, a~Lipschitz map
    $f : J \times \R^{\adim} \to \R^{\adim}$, a~finite set $\mathcal B \subseteq
    \CX(\mathcal F) \cap \cubes_{\vdim}^{\adim}$, and an open set $V \subseteq
    \R^{\adim}$ such that
    \begin{gather}
        f(0,x) = x \quad \text{for $x \in \R^{\adim}$} \,,
        \\
        f(t,x) = x \quad \text{for $(t,x) \in 
          \bigl( J \times (\R^{\adim} \without G) \cup \tbcup \mathcal B \bigr) 
          \cup \bigl( I \times \tbcup (\CX(\mathcal F) \cap \cubes_{\vdim}^{\adim}) \bigr)$} \,,
        \\
        S \subseteq V \,,
        \quad
        f \lIm J \times Q \rIm \subseteq Q \quad \text{for $Q \in \mathcal A$} \,,
        \quad
        f\lIm \{1\} \times V \rIm \cap G \subseteq \tbcup \bigl( \CX(\mathcal F) \cap \cubes_{\vdim}^{\adim} \bigr) \,,
        \\
        f\lIm \{2\} \times V \rIm \cap G = \tbcup \mathcal B \cap G \,,
        \quad
        f\lIm I \times (V \cap G) \rIm \subseteq \tbcup \mathcal A \,,
        \\
        \HM^{\vdim}(f(1,\cdot) \lIm R \cap G \rIm) \le \Gamma \HM^{\vdim}(R \cap G) \,,
        \quad
        \HM^{\vdim}(f(1,\cdot) \lIm U \cap G \rIm) \le \Gamma \HM^{\vdim}(U \cap G) \,,
        \\
        \HM^{\vdim}(f(1,\cdot) \lIm U \rIm \cap G) = 0 \,,
        \quad
        \text{$f(1,\cdot) \lIm U \rIm$ is purely $(\HM^{\vdim},\vdim)$~unrectifiable} \,,
        \\
        f(2,\cdot) \lIm f \lIm J \times V \rIm \rIm = f \lIm \{2\} \times V \rIm  \,,
        \\
        f\lIm \{2\} \times V \rIm \text{ is a strong deformation retract of } f\lIm J \times V \rIm
        \,.
    \end{gather}
\end{theorem}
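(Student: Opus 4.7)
The plan is to prove Theorem~\ref{thm:mae:dt} as a cubical deformation theorem in the spirit of Federer--Fleming \cite[4.2.9]{Fed69} and Almgren \cite[1.12]{Alm1986}, with the additional feature that the image of the purely unrectifiable part~$U$ remains purely~$(\HM^\vdim,\vdim)$~unrectifiable. The construction splits into two stages, $t \in I = [0,1]$ and $t \in [1,2]$; the open set~$V$ will be a small neighbourhood of~$S$ chosen to avoid all the projection centres selected below.

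For the first stage, I would construct $f$ on $I \times \R^\adim$ by iterating Lemma~\ref{lem:mae:good-proj} downward through the dimensions $k = \adim, \adim - 1, \ldots, \vdim + 1$ of the cubes of~$\CX(\mathcal F)$. At each dimension~$k$, for every cube~$Q$ of dimension~$k$ meeting the current image of~$S$, I would apply the Lemma with the measures $\mu_1 = \HM^\vdim \restrict (R \cap Q)$ and $\mu_2 = \HM^\vdim \restrict (U \cap Q)$, which are finite by the hypothesis $\HM^\vdim(\tbcup \mathcal A \cap \Clos S) < \infty$, to select a centre~$a_Q$ with $\dist(a_Q, \cBdry Q) > \tfrac 14 \side Q$ such that $p^* \circ \pi_{Q,a_Q} \circ p$ satisfies the measure-distortion bound and sends~$U \cap Q$ to a~purely~$(\HM^\vdim,\vdim)$~unrectifiable set. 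Linearly interpolating between the identity and this projection via a smooth ramp $s\colon I \to I$ yields a~Lipschitz homotopy on a sub-interval of~$I$ that fixes~$\cBdry Q$; admissibility of~$\mathcal F$ then ensures these local homotopies glue consistently across shared faces of adjacent cubes.

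Composing the partial homotopies over all dimensions from $\adim$ down to $\vdim + 1$ produces $f$ on $I \times \R^\adim$ with $f(1,\cdot)\lIm V \rIm \cap G \subseteq \tbcup (\CX(\mathcal F) \cap \cubes_\vdim^\adim)$; the overall constant $\Gamma(\adim,\vdim)$ arises as the product of the constants $\Gamma(\vdim,k,N)$ from the Lemma across the finitely many dimensions traversed. For the second stage, I would choose~$\mathcal B$ to be the finite subfamily of $\vdim$-cubes of~$\CX(\mathcal F)$ meeting~$G$ that survive after collapsing faces not needed to cover the image of~$V$, and construct through $t \in [1,2]$ a~Lipschitz strong deformation retract of $\tbcup(\CX(\mathcal F) \cap \cubes_\vdim^\adim) \cap G$ onto $\tbcup \mathcal B \cap G$ by piecewise linear pushes along collapsible faces, extended by the identity outside~$G$. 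The pure unrectifiability of $f(1,\cdot)\lIm U \rIm$ then splits into two parts: outside~$G$ the map is the identity and $U$ itself is purely unrectifiable, while inside~$G$ the image lies in the rectifiable $\vdim$-skeleton, so the unrectifiability preservation from the Lemma forces $\HM^\vdim(f(1,\cdot)\lIm U \rIm \cap G) = 0$.

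The main obstacle is precisely this preservation of unrectifiability, which is new relative to the classical Federer--Fleming deformation. The delicate step is already encoded in Lemma~\ref{lem:mae:good-proj}, whose Feuvrier-type genericity argument \cite{Feu2009} allows the centre~$a_Q$ to be chosen simultaneously for the measure distortion and for keeping $\pi_{Q,a_Q}\lIm U \cap Q \rIm$ purely unrectifiable. The remaining work in Theorem~\ref{thm:mae:dt} is careful bookkeeping across dimensions, together with gluing the cube-wise Lipschitz homotopies into a single map on $J \times \R^\adim$, where admissibility of~$\mathcal F$ (factor-$2$ control on adjacent side-lengths) and finiteness of~$\mathcal A$ are essential for producing one globally Lipschitz $f$ satisfying all the enumerated identities.
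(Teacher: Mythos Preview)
Your first-stage outline is essentially the paper's construction: one iterates Lemma~\ref{lem:mae:good-proj} through the cubes of $\CX(\mathcal F)$ in dimensions $k=\adim,\adim-1,\ldots,\vdim+1$, interpolating linearly between the identity and the central projection $p_Q^*\circ\pi_{Q,a_Q}\circ p_Q$ on each cube, and gluing via admissibility. One imprecision worth flagging: at step~$k$ you must apply the Lemma not to $\HM^\vdim\restrict(R\cap Q)$ and $\HM^\vdim\restrict(U\cap Q)$ but to the \emph{push-forwards} of these measures under the composition of the previous projections, weighted by $\|\uD\varphi\|^{\vdim}$; otherwise the iterated distortion estimate (and the unrectifiability claim for the composed map) does not follow. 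The paper tracks these as $\mu_{i,k}=(\varphi_k)_\#(\|\uD\varphi_k\|^\vdim\mu_{i,\adim})$ and shows $\mu_{i,k}(\tbcup\mathcal A_k)\le 4^{\adim}\Gamma_{\ref{lem:mae:good-proj}}\,\mu_{i,k+1}(\tbcup\mathcal A_{k+1})$ at each step.

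Your second-stage description has a genuine gap. You propose to choose $\mathcal B$ by ``collapsing faces not needed'' and then build a Lipschitz strong deformation retract of $\tbcup(\CX(\mathcal F)\cap\cubes_\vdim^\adim)\cap G$ onto $\tbcup\mathcal B\cap G$ by ``piecewise linear pushes along collapsible faces''. This is not available in general: a cubical $\vdim$-complex need not deformation retract onto an arbitrary subcomplex (think of the boundary of a $(\vdim{+}1)$-cube), and nothing in the hypotheses guarantees collapsibility. The paper does something different and more concrete. It sets $\mathcal B=\{Q\in\mathcal A_\vdim:Q\subseteq\varphi_\vdim[S]\}$, i.e.\ exactly the $\vdim$-cubes \emph{entirely covered} by the time-$1$ image of~$S$. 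For each $Q\in\mathcal A_\vdim\setminus\mathcal B$ there is then a point $a_Q\in\cInt Q$ missed by the image, and the map on $[1,2]$ is \emph{one further round of central projection} from these $a_Q$, pushing the image inside such $Q$ onto $\cBdry Q$. The open set $V$ is not just a thickening of~$S$ chosen to avoid the centres: it is defined by pulling back, through all the $\psi_k$, a neighbourhood $V_{\vdim-1}$ of $\tbcup\mathcal B$ inside $Z_\vdim$. This is what makes $f[\{2\}\times V]\cap G=\tbcup\mathcal B\cap G$ hold and what yields the strong-deformation-retract statement for $f[J\times V]$ (not for the whole $\vdim$-skeleton, which the theorem does not claim).
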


\begin{proof}
    For each $Q \in \CX(\mathcal F)$ we find $p_Q \in \orthproj{\adim}{\dim Q}$
    such that $Q \subseteq \centre{Q} + \im p_Q^*$. For $k \in
    \{0,1,2,\ldots,\adim\}$ set
    \begin{displaymath}
        \mathcal A_k = \bigl\{ Q \in \CX(\mathcal F) \cap \cubes_k^{\adim} : Q \cap G \ne \varnothing \bigr\} \,.
    \end{displaymath}
    We shall perform a central projection inside the cubes of~$\mathcal A_k$ for
    $k = \adim, \adim-1, \ldots, \vdim+1$. Note that $\Bdry G \cap \bigcup
    \mathcal A_k \ne \Bdry G$ for $k < \adim$. In~fact, all the projections
    shall equal identity on~$\Bdry G$.

    Let us set
    \begin{gather}
        \mu_{1,\adim} = \HM^{\vdim} \restrict (R \cap G) \,,
        \quad
        \mu_{2,\adim} = \HM^{\vdim} \restrict (U \cap G) \,,
        \quad
        \mu_{3,\adim} = \HM^{\vdim} \restrict (S \cap G) \,,
        \\
        \varphi_{\adim}(x) = \psi_{\adim}(t,x) = x \quad \text{for $(t,x) \in I \times \R^{\adim}$} \,,
        \quad
        \delta_{\adim+1} = 1 \,,
        \\
        E = \R^{\adim} \without G \,,
        \quad
        Z_{\adim+1} = \R^{\adim} \,.
    \end{gather}
    For $k \in \{\adim-1, \adim-2, \ldots, \vdim \}$ and $i \in \{1,2,3\}$ we
    shall define Lipschitz maps $\psi_k : I \times \R^{\adim} \to \R^{\adim}$
    and $\varphi_k : \R^{\adim} \to \R^{\adim}$, Radon measures $\mu_{i,k}$
    over~$\R^{\adim}$, sets $Z_{k+1} \subseteq \tbcup \mathcal A_{k+1} \cup E$,
    and numbers $\delta_{k+1} \in (0,1)$ satisfying
    \begin{equation}
        \label{eq:mae:dt:fk}
        \left\{
        \begin{gathered}
            \spt \mu_{i,k} = \varphi_{k}\lIm \spt \mu_{i,k+1} \rIm \subseteq E \cup \tbcup \mathcal A_{k} \,,
            \quad
            \psi_k \lIm I \times Z_{k+1} \rIm = Z_{k+1} \,,
            \\ 
            \bigl( \spt \mu_{i,k+1} + \oball{0}{\delta_{k+1}} \bigr) \cap \tbcup \mathcal A_{k+1} \subseteq Z_{k+1} \,,
            \quad
            \varphi_k = \psi_k(1,\cdot) \circ \varphi_{k+1} \,,
            \\
            \psi_k(t,x) = x \quad \text{for $(t,x) \in I \times (E \cup \tbcup \mathcal A_{k})$} \,,
            \quad
            \psi_k\lIm \{1\} \times Z_{k+1} \rIm 
            \subseteq E \cup \tbcup \mathcal A_{k} \,.
        \end{gathered}
        \right.
    \end{equation}
    We proceed inductively. Assume that for some $l \in \{ \adim-1, \ldots,
    \vdim+1\}$ we have defined $\psi_k$, $\varphi_k$, $\delta_{k+1}$, $Z_{k+1}$
    and $\mu_{i,k}$ for $k \in \{\adim, \adim-1,\ldots,l+1\}$ and $i \in
    \{1,2,3\}$. For each $Q \in \mathcal A_{l+1}$ we apply
    Lemma~\ref{lem:mae:good-proj} to find~$a_Q \in Q$ satisfying
    \begin{gather}
        \label{eq:aQ-choice}
        \dist(a_Q, \spt \mu_{3,l+1}) > 0 \,,
        \quad
        \dist(a_Q, \cBdry{Q}) > \tfrac 14 \side Q \,,
        \\
        \int_{Q} \|\uD (\pi_{Q,a_Q} \circ p_Q) \|^{\vdim} \ud \mu_{i,l+1} \le \Gamma_{\ref{lem:mae:good-proj}} \mu_{i,l+1}(Q)
        \quad \text{for $i \in \{1,2,3\}$} \,,
    \end{gather} 
    and such that if $A \subseteq \spt \mu_{3,l+1}$ is purely
    $(\HM^{\vdim},\vdim)$~unrectifiable, then $p_Q^* \circ \pi_{Q,a_Q} \circ p_Q
    \lIm A \rIm$ is also purely $(\HM^{\vdim},\vdim)$~unrectifiable.

    Let $\delta_{l+1} \in (0,1)$ be such that
    \begin{multline}
        \label{eq:def-delta}
        \dist(a_Q, \spt \mu_{3,l+1}) > 2 \delta_{l+1}
        \\ \text{and} \quad
        \dist(a_Q, \cBdry{Q}) >  2 \delta_{l+1}
        \quad \text{for all $Q \in \mathcal A_{l+1}$} \,.
    \end{multline}
    Set
    \begin{equation}
        \label{eq:def-Z}
        Z_{l+1} = E \cup \bigl(
        \tbcup \mathcal A_{l+1} \without \tbcup \bigl\{ \cball{a_Q}{\delta_{l+1}} : Q \in \mathcal A_{l+1} \bigr\}
        \bigr) \,.
    \end{equation}
    Define $\tilde \psi_l : I \times Z_{l+1} \to Z_{l+1}$ by setting for $(t,x)
    \in I \times Z_{l+1}$
    \begin{equation}
        \label{eq:def-psi}
        \tilde \psi_{l}(t,x) = 
        \left\{
            \begin{aligned}
                &(1-t)x + t p_Q^* \circ \pi_{Q,a_Q} \circ p_Q(x)
                &&\text{if $x \in \cInt{Q}$ for some $Q \in \mathcal A_{l+1}$} \,,
                \\
                &\tilde \psi_{l}(t,x) = x
                &&\text{if $x \in E \cup \tbcup \mathcal A_{l}$} \,.
            \end{aligned}
        \right.
    \end{equation}
    Since for $Q \in \mathcal A_{l+1}$ the map $p_Q^* \circ \pi_{Q,a_Q} \circ
    p_Q$ equals the identity on~$\cBdry{Q}$, is Lipschitz continuous
    on~$\R^{\adim} \without \oball{a_Q}{\delta_l}$, and~$Q$ is convex, we see
    that $\tilde \psi_l$ is well defined and Lipschitz continuous. Extend
    $\tilde \psi_l$ to a~Lipschitz map $\psi_l : I \times \R^{\adim} \to
    \R^{\adim}$ using~\cite[2.10.43]{Fed69}. Next, for $i \in \{1,2,3\}$ set
    \begin{displaymath}
        \varphi_l = \psi_l(1,\cdot) \circ \varphi_{l+1} 
        \quad \text{and} \quad
        \mu_{i,l} = (\varphi_{l})_{\#}(\|\uD \varphi_{l}\|^{\vdim} \mu_{i,\adim})  \,.
    \end{displaymath}
    Note that $\| \uD \varphi_{l}\|^{\vdim}$ is bounded and $\varphi_{l}$ is
    proper, so $\mu_{i,l}$ is a~Radon measure. Also, because we assumed $\spt
    \mu_{3,l+1} \subseteq E \cup \tbcup \mathcal A_{l+1}$, we readily verify
    that
    \begin{displaymath}
        \spt \mu_{3,l} \subseteq \varphi_{l} \lIm \Clos S \rIm \subseteq E \cup \tbcup \mathcal A_{l} \,.
    \end{displaymath}
    Hence, $\psi_l$, $\varphi_l$, $\mu_{i,l}$ for $i \in \{1,2,3\}$,
    $\delta_{l+1}$, and~$Z_{l+1}$ verify~\eqref{eq:mae:dt:fk}. This concludes the
    inductive construction.

    Define
    \begin{displaymath}
        \mathcal B = \bigl\{ Q \in \mathcal A_{\vdim} : Q \subseteq \varphi_{\vdim} \lIm S \rIm \bigr\} \,.
    \end{displaymath}
    For $Q \in \mathcal A_{\vdim} \without \mathcal B$ we choose $a_Q \in
    \cInt{Q}$ so that~\eqref{eq:aQ-choice} holds and we define $\delta_{\vdim}
    \in (0,1)$ so that~\eqref{eq:def-delta} is satisfied with $l+1 = \vdim$.
    Set
    \begin{gather}
        Z_{\vdim} = E \cup \bigl(
        \tbcup \mathcal A_{\vdim} \without \tbcup \bigl\{ \cball{a_Q}{\delta_{\vdim}} : Q \in \mathcal B \bigr\}
        \bigr) \,,
        \quad 
        \tilde \psi_{\vdim-1} : Z_{\vdim} \to Z_{\vdim} \,,
        \\
        \begin{aligned}
            \tilde \psi_{\vdim-1}(t,x) &= \tilde \psi_{l}(t,x) = x
            \quad \text{if $x \in E \cup \tbcup \mathcal B \cup \tbcup \mathcal A_{\vdim-1}$} \,,
            \\
            \tilde \psi_{\vdim-1}(t,x) &= (1-t)x + t p_Q^* \circ \pi_{Q,a_Q} \circ p_Q(x)
            \\
            &\phantom{= (1-t)x + t p_Q^* \circ}\text{if $x \in \cInt{Q}$ for some $Q \in \mathcal A_{\vdim} \without \mathcal B$} \,.
        \end{aligned}
    \end{gather}
    Extend $\tilde \psi_{\vdim-1}$ to a Lipschitz map $\psi_{\vdim-1} : I \times
    \R^{\adim} \to \R^{\adim}$. Set $\varphi_{\vdim-1} = \psi_{\vdim-1}(1,\cdot)
    \circ \varphi_{\vdim}$,
    \begin{gather}
        V_{\vdim-1} = E \cup \bigl( \tbcup \mathcal B + \oball{0}{\delta_{\vdim}} \bigr) \cap Z_{\vdim} \,,
        \\ 
        \text{and} \quad
        V_{l} = \tilde \psi_{l-1}(1,\cdot)^{-1} \lIm V_{l-1} \rIm \subseteq Z_l
        \quad \forall l \in \{ \vdim, \vdim+1, \ldots, \adim \} \,.
    \end{gather}
    Note that $V_{l}$ is relatively open in~$Z_{l}$ for $l \in \{ \adim,
    \adim-1, \ldots, \vdim\}$; in~particular, $V_{\adim}$ is open
    in~$\R^{\adim}$ and, setting $V = V_{\adim}$, we get
    \begin{displaymath}
        S \subseteq V \,,
        \quad
        \varphi_{\vdim-1} \lIm V \rIm \cap G = \tbcup \mathcal B \cap G \,.
    \end{displaymath}
    We set for $l \in \{ 1, 2, \ldots, \adim-\vdim \}$ and $(t,x) \in I \times
    \R^{\adim}$ satisfying $l-1 \le (\adim-\vdim) t < l$
    \begin{displaymath}
        f(t,x) = \psi_{\adim-l}\bigl( (\adim-\vdim)t-(l-1), \varphi_{\adim-l+1}(x) \bigr) 
    \end{displaymath}
    and for $(t,x) \in [1,2] \times \R^{\adim}$
    \begin{displaymath}
        f(t,x) = \psi_{\vdim-1}\bigl( t-1, \varphi_{\vdim}(x) \bigr)  \,.
    \end{displaymath}
    This defines a Lipschitz map $f : J \times \R^{\adim} \to \R^{\adim}$. From
    the construction it follows that $f\lIm \{1\} \times U \rIm$ is purely
    $(\HM^{\vdim},\vdim)$~unrectifiable and $f(1,\cdot)\lIm U \rIm \cap G
    \subseteq \tbcup (\CX(\mathcal F) \cap \cubes_{\vdim}^{\adim})$, so
    \begin{displaymath}
        \HM^{\vdim}(f(1,\cdot)\lIm U \rIm \cap G) = 0 \,.
    \end{displaymath}

    Now, we need to verify the required estimates. For brevity of the notation
    let us set
    \begin{displaymath}
        g = f(1,\cdot) \quad \text{and} \quad
        \eta_k = \psi_k(1,\cdot) \quad \text{for $k \in \{\vdim, \vdim+1 \ldots, \adim\}$} \,.
    \end{displaymath}
    Observe that if $Q \in \mathcal F$, then $\HM^{0}(\{ R \in \mathcal F : R
    \cap Q \ne \varnothing \}) \le 4^{\adim}$. Note also that for $k \in \{
    \vdim, \vdim+1, \ldots, \adim-1 \}$ and $i \in \{1,2,3\}$ we have
    \begin{multline*}
        (\varphi_{k+1})_{\#}\bigl( \| \uD \varphi_{k+1} \|^{\vdim} \mu_{i,\adim}
        \restrict \varphi_{k}^{-1} \lIm \tbcup \mathcal A_k \rIm \bigr)
        \\
        = (\varphi_{k+1})_{\#}\bigl( \| \uD \varphi_{k+1} \|^{\vdim} \mu_{i,\adim} \bigr)
        \restrict \varphi_{k+1} \lIm \varphi_{k}^{-1} \lIm \tbcup \mathcal A_k \rIm \rIm
        \\
        = \mu_{i,k+1} \restrict \eta_k^{-1} \lIm \tbcup \mathcal A_k \rIm
        \le \mu_{i,k+1} \restrict \tbcup \mathcal A_{k+1} \,,
    \end{multline*}
    so we obtain
    \begin{multline}
        \label{eq:mae:dt:iter}
        \mu_{i,k}(\tbcup \mathcal A_k) 
        = \int_{\varphi_k^{-1} \left \lIm \tbcup \mathcal A_k \right\rIm} \| \uD \varphi_k \|^{\vdim} \ud \mu_{i,\adim}
        \\
        \le \int_{\varphi_k^{-1} \left \lIm \tbcup \mathcal A_k \right\rIm} \| \uD \eta_k \circ \varphi_{k+1} \|^{\vdim} \| \uD \varphi_{k+1} \|^d \ud \mu_{i,\adim}
        = \int_{\eta_k^{-1} \left \lIm \tbcup \mathcal A_k \right\rIm} \| \uD \eta_k \|^{\vdim} \ud \mu_{i,k+1}
        \\
        \le \int_{\tbcup \mathcal A_{k+1}} \| \uD \eta_k \|^{\vdim} \ud \mu_{i,k+1}
        \le \sum_{Q \in \mathcal A_{k+1}} \int_{Q} \| \uD \eta_k \|^{\vdim} \ud \mu_{i,k+1}
        \\
        = \sum_{Q \in \mathcal A_{k+1}} \int_{Q} \| \uD (\pi_{Q,a_Q} \circ p_Q) \|^{\vdim} \ud \mu_{i,k+1}
        \\
        \le \Gamma_{\ref{lem:mae:good-proj}} \sum_{Q \in \mathcal A_{k+1}} \mu_{i,k+1}(Q)
        \le 4^{\adim} \Gamma_{\ref{lem:mae:good-proj}} \mu_{i,k+1}(\tbcup \mathcal A_{k+1}) \,.
    \end{multline}
    In~particular, setting $\Sigma_1 = R \cap G$, $\Sigma_2 = U \cap G$ and
    employing~\cite[7.12]{FangKol2017} we obtain for $i \in \{1,2\}$
    \begin{multline*}
        \HM^{\vdim}(g \lIm \Sigma_i \rIm \cap \tbcup \mathcal A_{\vdim})
        = \HM^{\vdim}(\varphi_d \lIm \Sigma_i \rIm \cap \tbcup \mathcal A_{\vdim})
        \\
        \le \int_{\varphi_{\vdim}^{-1} \left\lIm \tbcup \mathcal A_{\vdim} \right\rIm} \| \uD \varphi_{\vdim} \|^{\vdim} \ud \mu_{i,\adim}
        = \mu_{i,\vdim}(\tbcup \mathcal A_{\vdim})
        \\
        \le \bigl( 4^{\adim} \Gamma_{\ref{lem:mae:good-proj}} \bigr)^{\adim - \vdim} \mu_{i,\adim}(\tbcup \mathcal A_{\adim})
        = \bigl( 4^{\adim} \Gamma_{\ref{lem:mae:good-proj}} \bigr)^{\adim - \vdim} \HM^{\vdim}(\Sigma_i) \,.
    \end{multline*}
    Estimating as in~\eqref{eq:mae:dt:iter}, we also get
    \begin{multline*}
        \HM^{\vdim}(g \lIm \Sigma_i \rIm \without \tbcup \mathcal A_{\vdim})
        = \HM^{\vdim}(\varphi_d \lIm \Sigma_i \rIm \without \tbcup \mathcal A_{\vdim})
        \le \int_{G \cap \varphi_{\vdim}^{-1} \left\lIm \Bdry G \right\rIm} \| \uD \varphi_{\vdim} \|^{\vdim} \ud \mu_{i,\adim}
        \\
        \le \int_{\varphi_{\vdim+1} \lIm G \rIm \cap \eta_{\vdim}^{-1} \left\lIm \Bdry G \right\rIm} \| \uD \eta_{\vdim} \|^{\vdim} \ud \mu_{i,\vdim+1}
        \le \int_{\tbcup \mathcal A_{\vdim+1}} \| \uD \eta_{\vdim} \|^{\vdim} \ud \mu_{i,\vdim+1}
        \\
        \le 4^{\adim} \Gamma_{\ref{lem:mae:good-proj}} \mu_{i,\vdim+1}(\tbcup \mathcal A_{\vdim+1})
        \le (4^{\adim} \Gamma_{\ref{lem:mae:good-proj}})^{\adim - \vdim} \HM^{\vdim}(\Sigma_i)  \,.
    \end{multline*}
    This gives the desired estimates.
\end{proof}

\begin{remark}
    \label{rem:image-warning}
    Observe that 
    \begin{equation*}
        f(1,\cdot) \lIm S \rIm \cap G \subseteq \tbcup \bigl( \CX(\mathcal F) \cap \cubes_{\vdim}^{\adim} \bigr) \qquad 
        \text{but} \qquad
        f(1,\cdot) \lIm S \cap G \rIm \subseteq \tbcup \bigl( \CX(\mathcal F) \cap \cubes_{\vdim}^{\adim} \bigr) \cup \Bdry G \,.
    \end{equation*}
\end{remark}

\begin{remark}
    \label{rem:dt-retract}
    Define
    \begin{gather*}
        \tilde Q = \tbcup \{ R \in \mathcal F : R \cap Q \ne \varnothing \} 
        \quad \forall Q \in \mathcal F \,,
        \\
        H = \tbcup \{ Q \in \mathcal A : \tilde Q \subseteq \tbcup \mathcal A \} \,,
        \quad \text{and} \quad
        W = V \cap G \,.
    \end{gather*}
    Assume that $S$ is separated from $E = \R^{\adim} \without G$ in the sense
    that $S \subseteq H$. Then $W$ is an open neighborhood of~$S$
    in~$\R^{\adim}$ with
    \begin{displaymath}
        f\lIm J \times S \rIm \subseteq f\lIm J \times W \rIm \subseteq W 
    \end{displaymath}
    and $f(2,\cdot) \lIm W \rIm = \tbcup B$ is a strong deformation retract
    of~$S$.
\end{remark}

In the next lemma given a test pair~$(S,D)$ we construct a Lipschitz deformation
$f : \R^{\adim} \to \R^{\adim}$ which modifies the rectifiable part~$R$ of~$S$
only on a set of small measure and transforms the unrectifiable part~$I$ into
a~nullset. The construction works as follows. The set~$R$ can be represented,
up~to a~set of arbitrarily small measure, as a~finite disjointed collection $\{
F_1, \ldots, F_N\}$, where each $F_i$ is a~compact subsets of the~graph of
a~$\cnt^1$ map~$\psi_i : T_i \to T_i^{\perp}$ for some $T_i \in
\grass{\adim}{\vdim}$. Since the pieces $F_i$ are compact and pairwise disjoint,
there is a~positive distance $70\delta$ between them. To deal with the part
of~$I$ which lies at least $4\delta$ away from~$F = \tbcup_{i=1}^N F_i$ we
employ the deformation theorem~\ref{thm:mae:dt} and obtain the map $g : \R^\adim
\to \R^\adim$ which does not move points of~$F$, converts the pary of~$I$ away
from~$F$ into a nullset, and preserves unrectifiability of the part of~$I$ close
to~$F$. After this step the unrectifiable part of~$g \lIm S \rIm$ lies entirely
in $4\delta$-neighbourhood of~$F$. Next, for each~$i$ we employ the
Besicovitch-Federer projection theorem to find $P_i \in \grass{\adim}{\vdim}$
such that the associated orthogonal projection~$\project{P_i}$ kills the measure
of the unrectifiable part of~$g\lIm S \rIm$. We replace $\psi_i$ with $\varphi_i
: P_i \to P_i^{\perp}$ so that the graphs of~$\psi_i$ and~$\varphi_i$ coincide
and we define a~projection $\pi_i = \project{P_i} + \varphi_i \circ
\project{P_i}$ onto the graph of $\psi_i$. The map $\pi_i$ does not move points
of~$F_i$ and carries the unrectifiable part of~$g\lIm S \rIm$ into a
nullset. The final step is to combine all the maps~$\pi_i$ into a single map~$h$
using simple interpolation, which is possible since $F_i$ is at least $70\delta$
away from $F_j$ if $i \ne j$. The final deformation is $f = h \circ g$. There is
still a~small problem with $f$: we do not know how~$f$ acts on the boundary~$B$
of~$D$ and we want $(f\lIm S \rIm,D)$ to be a~test pair. To deal with that we
artificially introduce the set $F_0 = T \cap (B + \cball 0{\delta})$ and the
map $\psi_0 : T \to T^{\perp}$, where $T \in \grass{\adim}{\vdim}$ is such that
$D \subseteq T$. After that, the whole construction yields a correct map.

\begin{lemma}
    \label{lem:mae:rect-test-pair}
    Assume
    \begin{gather}
        \text{$(S,D)$ is a test pair} \,,
        \quad
        T = \Tan(D,0) \,,
        \quad
        B = T \cap \Bdry{\cball 01} \,,
        \\
        R = \mathcal R(S) \,,
        \quad
        I = \mathcal U(S) \,.
    \end{gather}
    For each $\varepsilon \in (0,1)$ there exists a Lipschitz map $f :
    \R^{\adim} \to \R^{\adim}$ such that
    \begin{gather*}
        f(x) = x \quad \text{for $x \in B$} \,,
        \quad
        \HM^{\vdim}(f \lIm I \rIm) = 0 \,,
        \quad
        \HM^{\vdim}\bigl( (R \without f \lIm R \rIm) \cup (f \lIm R \rIm \without R) \bigr) \le \varepsilon \,.
    \end{gather*}
    In~particular, $f \lIm S \rIm$ is $(\HM^{\vdim},\vdim)$~rectifiable and $(f
    \lIm S \rIm,D)$ is a~rectifiable test pair.
\end{lemma}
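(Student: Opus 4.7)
The plan is to carry out exactly the construction sketched in the paragraph preceding the statement. First, using the standard structure theorem for countably $(\HM^{\vdim},\vdim)$~rectifiable sets (see~\cite[3.2.29]{Fed69}), I would cover $R$ up to $\HM^{\vdim}$~measure $\varepsilon/2$ by a~finite disjointed family of compact pieces $F_1,\ldots,F_N$, each contained in the graph of a~$\cnt^1$~function $\psi_i : T_i \to T_i^{\perp}$ with $T_i \in \grass{\adim}{\vdim}$. To arrange the required boundary behaviour I would artificially adjoin a~``boundary piece'' $F_0 = T \cap (B + \cball{0}{\delta})$ together with $\psi_0 \equiv 0 : T \to T^{\perp}$. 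Choosing $\delta > 0$ small enough, the compact pieces $F_0,F_1,\ldots,F_N$ become pairwise $70\delta$-separated.

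Next I would build an admissible family $\mathcal F \subseteq \cubes^{\adim}$ of cubes of side length of order~$\delta$ and let $\mathcal A \subseteq \mathcal F$ consist of those cubes whose closure does not meet the $4\delta$-neighbourhood of $F = F_0 \cup F_1 \cup \cdots \cup F_N$. Applying Theorem~\ref{thm:mae:dt} to $S$, $\mathcal F$, $\mathcal A$, I obtain a~Lipschitz map $g : \R^{\adim} \to \R^{\adim}$ which fixes the complement of $\Int \tbcup \mathcal A$ (in particular $F$ and $B$), sends $I \cap \tbcup \mathcal A$ into a~set of $\HM^{\vdim}$~measure zero, and keeps $g \lIm I \rIm$ purely $(\HM^{\vdim},\vdim)$~unrectifiable. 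After this step every point of $g\lIm I \rIm$ carrying positive measure lies within the $4\delta$-thickening of~$F$.

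For each $i \in \{0,1,\ldots,N\}$ I would then invoke the Besicovitch--Federer projection theorem on the purely unrectifiable slab $g\lIm I \rIm \cap (F_i + \cball{0}{4\delta})$ to produce $P_i \in \grass{\adim}{\vdim}$, arbitrarily close to $T_i$, with $\HM^{\vdim}(\project{P_i} \lIm g\lIm I\rIm \cap (F_i + \cball{0}{4\delta}) \rIm) = 0$. Because $P_i$ is close to $T_i$, the implicit function theorem writes $F_i$ (possibly after a negligible shrinkage) as the graph of a~$\cnt^1$~map $\varphi_i : P_i \to P_i^{\perp}$, and I define $\pi_i(x) = \project{P_i}(x) + \varphi_i(\project{P_i}(x))$. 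This $\pi_i$ is Lipschitz, is the identity on~$F_i$, and sends the nearby unrectifiable slab into an $\HM^{\vdim}$~nullset. Since the $F_i$ are $70\delta$-apart, the $\pi_i$ can be glued together via a~Lipschitz cut-off supported in disjoint neighbourhoods into a~single map $h$ that equals the identity outside these neighbourhoods. Setting $f = h \circ g$ yields $\HM^{\vdim}(f \lIm I \rIm) = 0$ and $f(x) = x$ for $x \in B$ (because $\pi_0$ is the identity on~$B$ by construction and $g$ already fixes~$B$). The symmetric-difference estimate on~$R$ follows because the initial covering misses only $\varepsilon/2$ of~$R$ and the small $\cnt^1$~perturbation $\psi_i \rightsquigarrow \varphi_i$ can be arranged to alter at most $\varepsilon/2$ of its measure.

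The hard part is the simultaneous enforcement of the boundary condition $f|_B = \id{B}$ and the preservation of unrectifiability under the central-projection step hidden inside the deformation. The first difficulty is precisely what forces the artificial piece $(F_0,\psi_0)$, which must be threaded through every stage of the construction so that $B$ is never moved; the second is precisely what the refined deformation Theorem~\ref{thm:mae:dt} is designed to provide, the absence of this feature in the Federer--Fleming version of the deformation theorem being the whole motivation for proving~\ref{thm:mae:dt}. The remaining technical points -- keeping the $70\delta$-separation stable under the perturbation $T_i \rightsquigarrow P_i$ and keeping the total defect on~$R$ below $\varepsilon$ -- are quantitative and are handled by choosing $\delta$ small and $P_i$ close enough to $T_i$ in~$\grass{\adim}{\vdim}$.
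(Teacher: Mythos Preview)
Your plan matches the paper's proof essentially step for step: the decomposition of $R$ into $\cnt^1$ graph pieces $F_1,\ldots,F_N$, the artificial boundary piece $F_0$, the application of Theorem~\ref{thm:mae:dt} away from $F$, the Besicovitch--Federer choice of $P_i$ near $T_i$, the graph projections $\pi_i$, and the cut-off gluing into $h$ are all exactly what the paper does. One small correction to your reasoning: the symmetric-difference estimate on $R$ does not come from the perturbation $\psi_i \rightsquigarrow \varphi_i$ altering measure (the graphs of $\psi_i$ and $\varphi_i$ \emph{coincide}, so $\pi_i$ is the identity on all of $F_i$, no shrinkage needed); rather, it comes from the fact that $f$ fixes $F$ pointwise, $\HM^{\vdim}(R \without F)$ is small, and the Lipschitz constant of $h$ together with the estimate from Theorem~\ref{thm:mae:dt} controls $\HM^{\vdim}(f\lIm R \without F\rIm)$.
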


\begin{proof}
    We define
    \begin{equation}
        \label{eq:iota-def}
        \iota = \bigl( 2 + 45 \Gamma_{\ref{thm:mae:dt}} + 45 \bigr)^{-1} \varepsilon \,.
    \end{equation}
    Since $\HM^{\vdim}(B) = 0$ we can find $\delta_0 \in (0, \frac 14)$ such that
    \begin{displaymath}
        \HM^{\vdim}\bigl( (B + \cball 0{\delta_0}) \cap S \bigr) < \iota \,.
    \end{displaymath}
    Employing~\cite[3.2.29, 3.1.19(5), 2.8.18, 2.2.5]{Fed69} we find $Z
    \subseteq \R^\adim$ and for each $i \in \nat$ a~vectorspace $T_i \in
    \grass{\adim}{\vdim}$, a~compact set $K_i \subseteq T_i$, and a~$\cnt^1$ map
    $\psi_i : T_i \to T_i^{\perp}$ such that, denoting $\bar F_i = \{ x +
    \psi_i(x) : x \in K_i \}$, it holds
    \begin{equation}\label{11}
        \bar F_i \cap \bar F_j = \varnothing \quad \forall i \ne j \,, \quad 
        R = Z \cup {\textstyle \tbcup_{i=1}^{\infty} \bar F_i } \,,
        \quad
        \HM^{\vdim}(Z) = 0 \,,
        \quad
        \Lip \psi_i \le 1 \,.
    \end{equation}
    Since $\HM^{\vdim}(R) < \infty$ we can find $N \in \nat$ such that
    \begin{equation*}
        \HM^{\vdim}(R \without {\textstyle \tbcup_{i=1}^{N} \bar F_i}) < \iota \,.
    \end{equation*}
    Set
    \begin{equation}
        \delta = 80^{-1}\min \bigl\{ \delta_0,
        \inf \bigl\{ |x-y| : x \in \bar F_i ,\, y \in \bar F_j ,\,i,j \in \{1,\ldots,N\} \,, i \ne j \bigr\} \bigr\}
        < 80^{-1}\,.
    \end{equation}
    Note that $\delta > 0$ because the sets $\bar F_i$ are mutually disjoint and
    compact. Define
    \begin{displaymath}
        F_0 = T \cap (B + \cball 0{\delta}) \,,
        \quad
        T_0 = T \,,
        \quad \text{and} \quad
        \psi_0 : T \to T^{\perp} \text{ by } \psi_0(x) = 0 \text{ for } x \in T \,.
    \end{displaymath}
    For $i \in \{1,\ldots,N\}$ set
    \begin{displaymath}
        F_i = \bar F_i \without ( F_0 + \oball 0{70\delta} )
        \quad \text{and} \quad
        F = {\textstyle \tbcup_{i=0}^{N} F_i} \,.
    \end{displaymath}
    Clearly we have
    \begin{gather}
        \label{eq:R-F-meas}
        B \subseteq F \,,
        \quad
        \HM^{\vdim}(R \without F) \le
        \HM^{\vdim}(R \without {\textstyle \tbcup_{i=1}^{N} \bar F_i})
        + \HM^{\vdim}\bigl( (B + \cball 0{\delta_0}) \cap S \bigr)
        < 2 \iota
        \\
        \label{22}
        \text{and} \quad
        |x-y| \ge 70 \delta \quad \text{whenever $x \in F_i$, $y \in F_j$, $i,j \in \{0,1,\ldots,N\}$, $i \ne j$} \,.
    \end{gather}
    Let $L \in \nat$ be such that $2^{-L} < \delta n^{-1/2} \le 2^{-L+1}$ so
    that $\diam Q < \delta$ whenever $Q \in \cubes_{\adim}^{\adim}(L)$.
    We~define
    \begin{gather}
        \mathcal F = \cubes_{\adim}^{\adim}(L) \,,
        \quad
        \widetilde Q = \tbcup \{ Q' \in \mathcal F : Q' \cap Q \ne \varnothing \}
        \quad \text{for every $Q \in \mathcal F$} \,,
        \\
        \mathcal A = \bigl\{ Q \in \mathcal F :
        \widetilde Q \cap I \ne \varnothing ,\,
        Q \cap (F + \cball 0{2\delta}) = \varnothing \bigr\}  \,,
        \quad
        G = \Int \tbcup \mathcal A
        \,.
    \end{gather}
    Observe that
    \begin{equation}
        \label{eq:dist-4d}
        \bigl\{ x \in I : \dist(x,F) \ge 4 \delta \bigr\}
        \subseteq \tbcup \bigl\{ Q \in \mathcal A : \widetilde Q \subseteq G \bigr\}
        \subseteq G \,.
    \end{equation}
    We apply Theorem~\ref{thm:mae:dt} to obtain a~Lipschitz continuous map $g :
    \R^{\adim} \to \R^{\adim}$ such that
    \begin{gather}
        g(x) = x \quad \text{for $x \in \R^{\adim} \without G$} \,,
        \quad
        \text{$g\lIm I \rIm$ is purely $(\HM^{\vdim},\vdim)$~unrectifiable} \,,
        \\
        \label{eq:im-reg-est}
        \HM^{\vdim}(g \lIm R \cap G \rIm) 
        \le \Gamma_{\ref{thm:mae:dt}} \HM^{\vdim}(R \cap G)
        \le \Gamma_{\ref{thm:mae:dt}} \HM^{\vdim}(R \without F)
        \le  \iota  \cdot \Gamma_{\ref{thm:mae:dt}} \,,
        \\
        \label{eq:dt-zero-im}
        \HM^{\vdim}(g \lIm I  \rIm \cap G) = 0 \,,
        \quad
        \HM^{\vdim}(g \lIm I \cap G \rIm) 
        \le \Gamma_{\ref{thm:mae:dt}} \HM^{\vdim}(I \cap G) < \infty \,.
    \end{gather}
    In~particular, from~\eqref{eq:dt-zero-im}, \eqref{eq:dist-4d}, and the fact
    that $g\lIm Q \rIm \subseteq Q$ for all $Q \in \mathcal F$ we deduce
    \begin{equation}
        \label{eq:far-I-zero}
        \HM^\vdim\bigl( g \bigl\lIm \bigl\{ x \in I : \dist(x,F) \ge 4 \delta \bigr\} \bigr\rIm \bigr) = 0 \,.
    \end{equation}
    For each $i \in \{ 0, 1, \ldots, N \}$, we employ the Besicovitch-Federer
    projection theorem~\cite[3.3.15]{Fed69} to choose $P_i \in
    \grass{\adim}{\vdim}$ such that
    \begin{equation}\label{33}
        \|\project{P_i} - \project{T_i}\| < 1/100
        \quad \text{and} \quad
        \HM^{\vdim}(\project{P_i} \circ g \lIm I \rIm) = 0 \,.
    \end{equation}
    Thanks to \eqref{11} and \eqref{33}, we can
    apply~\cite[Lemma~3.2]{KSvdM2015} to conclude that for every $i \in
    \{0,1,\ldots,N\}$ there exists a~$\cnt^1$~function $\varphi_i : P_i \to
    P_i^{\perp}$ such that $\{ x + \psi_i(x) : x \in T_i \} = \{ x +
    \varphi_i(x) : x \in P_i \}$ and $\Lip \varphi_i \le 2$. Next, for every $i
    \in \{0,1,\ldots,N\}$ we define the projection onto the graph of $\varphi_i$
    by the formula
    \begin{displaymath}
        \pi_i : \R^{\adim} \to \R^{\adim} \,,
        \quad
        \pi_i(x) = \project{P_i}x + \varphi_i(\project{P_i}x) \quad \text{for $x \in \R^{\adim}$} \,.
    \end{displaymath}
    Note that $\Lip \pi_i \le 1 + \Lip \varphi_i \le 3$. We choose a~smooth map
    $\gamma : \R \to \R$ such that
    \begin{displaymath}
        \gamma(t) = 0 \quad \text{for $t > 10 \delta$} \,,
        \quad
        \gamma(t) = 1 \quad \text{for $t < 5 \delta$} \,,
        \quad
        -\frac{1}{\delta} \le \gamma'(t) \le 0
    \end{displaymath}
    and we define $\cnt^{\infty}$ maps $f, h, \lambda_0, \lambda_1, \ldots,
    \lambda_N : \R^{\adim} \to \R^{\adim}$ by
    \begin{gather}
        \lambda_i(x) = \gamma(\dist(x,F_i)) \pi_i(x) + (1 - \gamma(\dist(x,F_i))) x \quad \text{for $i \in \{0,1,2,\ldots,N\}$}\,,
        \\
        h = \lambda_0 \circ \lambda_1 \circ \cdots \circ \lambda_N \,,
        \quad
        f = h \circ g \,.
    \end{gather}
    We remark that for every $x \in \R^{\adim}$, if there exists $i \in \{
    0,1,\ldots, N \}$ and $y \in F_i$ satisfying $|x-y| = \dist(x,F_i) \le 10
    \delta$, then $\pi_i(y) = y$ and
    \begin{equation}
        \label{44}
        |x - \pi_i(x)|
        \le |x-y| + |\pi_i(y) - \pi_i(x)| + |y - \pi_i(y)|
        \le 10 \delta + 3\cdot 10 \delta \le 40 \delta \,.
    \end{equation}
    In particular, \eqref{44} implies that 
    \begin{equation}
        \label{55}
        \dist(\lambda_i(x),F_i)\leq \dist(\lambda_i(x),x)+\dist(x,F_i) \leq \dist(\pi_i(x),x)+10 \delta \leq 50 \delta \,,
    \end{equation}
    which in turn, combined with~\eqref{22}, implies that $h(x)=\lambda_i(x)$
    and that the index~$i$ is unique for~$x$. Moreover, since the map
    $\dist(\cdot,F_i)$ is $1$-Lipschitz, we get
    \begin{displaymath}
        \| \uD h(x) \| = \| \uD \lambda_i(x) \|
        \le \delta^{-1} |\pi_i(x) - x| + \| \uD(\pi_i - \id{\R^\adim})(x) \| + 1
        \le 45 \,.
    \end{displaymath}
    On the other hand, if $x \in \R^{\adim}$ is such that $\dist(x,F_i) > 10
    \delta$ for every $i \in \{ 1,\ldots, N \}$, then $h(x)=x$. Hence, we get
    \begin{equation}
        \label{66}
        \Lip h \le 45 \,.
    \end{equation}
    Since, by~\eqref{eq:far-I-zero}, the unrectifiable part of $g \lIm S \rIm$
    lies in $4\delta$-neighbourhood of~$F$ and for each $i \in \{0,1,\ldots,N\}$
    the maps $h$, $\lambda_i$, and $\pi_i$ are all equal in
    $5\delta$-neighbourhood of~$F_i$ we see that
    \begin{displaymath}
        \HM^{\vdim}(f \lIm I \rIm) = 0 \,.
    \end{displaymath}
    Moreover, since $f(x) = x$ for $x \in F$ we have
    \begin{gather}
        R \without f \lIm R \rIm
        \subseteq R \without f \lIm R \cap F \rIm
        = R \without (R \cap F) = R \without F \,,
        \\
        f \lIm R \rIm \without R
        \subseteq f \lIm R \rIm \without (R \cap F)
        = f \lIm R \rIm \without f \lIm R \cap F \rIm
        \subseteq f \lIm R \without (R \cap F) \rIm
        =  f \lIm R \without F \rIm \,;
    \end{gather}
    hence, recalling~\eqref{eq:R-F-meas}, \eqref{eq:im-reg-est},
    and~\eqref{eq:iota-def}, we get
    \begin{multline}
        \HM^{\vdim} \bigl( (R \without f \lIm R \rIm) \cup (f \lIm R \rIm \without R) \bigr)
        \le 
        \HM^{\vdim}(R \without F) +  \HM^{\vdim}(f \lIm R \without F \rIm)
        \\
        \le 2 \iota + \Lip h \cdot \HM^{\vdim}(g \lIm R \without F \rIm)
        \le 2 \iota + 45 \HM^{\vdim}(g \lIm R \cap G \rIm) + 45 \HM^{\vdim}(R \without (G \cup F))
        \\
        \le 2 \iota + 45  \Gamma_{\ref{thm:mae:dt}} \iota + 45 \iota
        \le \iota ( 2 + 45 \Gamma_{\ref{thm:mae:dt}} + 45 ) \le \varepsilon \,.
        \qedhere
    \end{multline}
\end{proof}

\begin{remark}
    The difficulty in proving Lemma~\ref{lem:mae:rect-test-pair} stems from the
    situation when $\HM^{\vdim}(R \cap \Clos I) > 0$;
    cf.~\cite[4.2.25]{Fed69}. In this case one cannot argue that
    \begin{displaymath}
        {\textstyle \lim_{r \downarrow 0} \HM^{\vdim}((I + \oball 0r) \cap R) = 0 }
    \end{displaymath}
    so it is not possible to separate the unrectifiable part of~$S$ from the
    rectifiable part. However, since $R$~has a nice (rectifiable) structure and
    $I$~can be easily squashed to a~set of~$\HM^{\vdim}$ measure zero by means
    of Besicovitch-Federer projection theorem~\cite[3.3.15]{Fed69}, we can find
    nice Lipschitz deformations which produce ``holes'' in~$I$ and do not move
    most of~$R$.
\end{remark}

\begin{corollary}
    \label{cor:mae:equiv}
    Let $x \in \R^\adim$, $\mathcal P_1$ be the set of all test pairs, and $\mathcal
    P_2$ be the set of rectifiable test pairs. Then
    \begin{displaymath}
        \FAE_x(\mathcal P_1) = \FAE_x(\mathcal P_2)
        \quad \text{and} \quad
        \FAUE_x(\mathcal P_1) = \FAUE_x(\mathcal P_2) \,.
    \end{displaymath}
\end{corollary}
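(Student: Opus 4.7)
The inclusions $\FAE_x(\mathcal P_1) \subseteq \FAE_x(\mathcal P_2)$ and $\FAUE_x(\mathcal P_1) \subseteq \FAUE_x(\mathcal P_2)$ are immediate since $\mathcal P_2 \subseteq \mathcal P_1$. For the reverse inclusions, fix $F$ in the smaller class and $(S,D) \in \mathcal P_1$ with $\HM^{\vdim}(S) > \HM^{\vdim}(D)$; set $R = \mathcal R(S)$ and $U = \mathcal U(S)$. If $\HM^{\vdim}(U) = 0$ then $S$ is already $(\HM^{\vdim},\vdim)$-rectifiable, so $(S,D) \in \mathcal P_2$ and the conclusion is immediate; hence I assume $\HM^{\vdim}(U) > 0$.

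Given a small $\varepsilon > 0$, Lemma~\ref{lem:mae:rect-test-pair} yields a Lipschitz map~$f$ with $(f\lIm S \rIm, D) \in \mathcal P_2$, $\HM^{\vdim}(f\lIm U \rIm) = 0$, and $\HM^{\vdim}(R \triangle f\lIm R \rIm) \le \varepsilon$. Since $\grass{\adim}{\vdim}$ is compact and $F^x$ is continuous, I pick $T^\ast \in \grass{\adim}{\vdim}$ attaining $M := \sup \im F^x$, and let $S'' \subseteq T^\ast + v$ be a planar $\vdim$-disk of measure $s > 0$, translated by a vector $v$ so that $S''$ is disjoint from $\cball{0}{2}$ and from $f\lIm S \rIm$. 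Set $S_\ast = f\lIm S \rIm \cup S''$. Then $S_\ast$ is $(\HM^{\vdim},\vdim)$-rectifiable, $\HM^{\vdim}(S_\ast) \ge \HM^{\vdim}(R) - \varepsilon + s$, and $\Phi_{F^x}(S_\ast) = \Phi_{F^x}(f\lIm R \rIm) + Ms$. Moreover $(S_\ast, D) \in \mathcal P_2$: since $B \subseteq f\lIm S \rIm$, any Lipschitz $g : \R^{\adim} \to \R^{\adim}$ with $g(x) = x$ for $x \in B$ and $g\lIm S_\ast \rIm = B$ would satisfy $g\lIm f\lIm S \rIm \rIm \supseteq g\lIm B \rIm = B$, hence $g\lIm f\lIm S \rIm \rIm = B$, contradicting that $(f\lIm S \rIm, D)$ is a test pair.

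The key estimate is
\begin{equation*}
    \lvert \Phi_{F^x}(f\lIm R \rIm) - \Phi_{F^x}(R) \rvert \le M\varepsilon \,.
\end{equation*}
This is because on the overlap $R \cap f\lIm R \rIm$ the two rectifiable sets share approximate tangent planes at $\HM^{\vdim}$-a.e.\ point: the finite symmetric difference forces the intersection to have density one in each of the two sets at a.e.\ common point, so both sets inherit the intersection's tangent plane; the remaining contributions from the symmetric difference are bounded above by $M \HM^{\vdim}(R \triangle f\lIm R \rIm) \le M\varepsilon$. Combined with Remark~\ref{rem:Psi-Phi}, which gives $\Psi_{F^x}(S) = \Phi_{F^x}(R) + M \HM^{\vdim}(U)$, this yields the \emph{master comparison}
\begin{equation*}
    \Psi_{F^x}(S_\ast) \le \Psi_{F^x}(S) + M \bigl( s + \varepsilon - \HM^{\vdim}(U) \bigr) \,.
\end{equation*}

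For $\FAUE$, I take $s = \HM^{\vdim}(U)$; then $\HM^{\vdim}(S_\ast) \ge \HM^{\vdim}(S) - \varepsilon > \HM^{\vdim}(D)$ for small $\varepsilon$, and the hypothesis $\Psi_{F^x}(S_\ast) - \Psi_{F^x}(D) \ge c (\HM^{\vdim}(S_\ast) - \HM^{\vdim}(D))$ combined with the master comparison yields $\Psi_{F^x}(S) - \Psi_{F^x}(D) \ge c (\HM^{\vdim}(S) - \HM^{\vdim}(D)) - (c + M)\varepsilon$; sending $\varepsilon \to 0$ proves $F \in \FAUE_x(\mathcal P_1)$ with the same constant. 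For $\FAE$ the strict inequality carries no quantitative gap, so the delicate choice is $s = \max(0, \HM^{\vdim}(D) - \HM^{\vdim}(R)) + 2\varepsilon$. For $\varepsilon < \tfrac{1}{3}\min\{\HM^{\vdim}(U), \HM^{\vdim}(S) - \HM^{\vdim}(D)\}$, a case analysis on the sign of $\HM^{\vdim}(R) - \HM^{\vdim}(D)$ shows both $\HM^{\vdim}(S_\ast) > \HM^{\vdim}(D)$ and $\HM^{\vdim}(U) - s - \varepsilon \ge 0$; then $F \in \FAE_x(\mathcal P_2)$ applied to $(S_\ast, D)$ gives $\Psi_{F^x}(S_\ast) > \Psi_{F^x}(D)$, which the master comparison upgrades to $\Psi_{F^x}(S) > \Psi_{F^x}(D) + M(\HM^{\vdim}(U) - s - \varepsilon) \ge \Psi_{F^x}(D)$. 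The principal obstacle is the tangent-plane matching on $R \cap f\lIm R \rIm$ underlying the key estimate; the tuned choice of~$s$ in the $\FAE$ argument is equally essential, since a naive passage $\varepsilon \to 0$ would collapse the strict inequality to a non-strict one.
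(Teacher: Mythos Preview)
Your argument is correct. One small point: the blanket assumption $\HM^{\vdim}(S) > \HM^{\vdim}(D)$ you impose at the outset is required only for $\FAE$; the $\FAUE$ hypothesis applies to every $(S_\ast, D) \in \mathcal P_2$ regardless of sign, so your $\FAUE$ chain with $s = \HM^{\vdim}(U)$ runs verbatim for all $(S,D) \in \mathcal P_1$ with $\HM^{\vdim}(U) > 0$ and delivers the same constant~$c$.

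Your route differs genuinely from the paper's. The paper does not introduce the auxiliary disk~$S''$: it applies Lemma~\ref{lem:mae:rect-test-pair} with $\varepsilon = 1/k$, observes $\lim_k \Phi_{F^x}(f_k\lIm S \rIm) = \Phi_{F^x}(R)$ and $\lim_k \HM^{\vdim}(f_k\lIm S \rIm) = \HM^{\vdim}(R)$, and reads off the conclusion directly from the decomposition $\Psi_{F^x}(S) - \Psi_{F^x}(D) = \Psi_{F^x}(I) + \bigl(\Phi_{F^x}(R) - \Phi_{F^x}(D)\bigr)$ of Remark~\ref{rem:Psi-Phi} together with $\Psi_{F^x}(I) = (\sup \im F^x)\,\HM^{\vdim}(I)$. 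This is shorter and bypasses both your tangent-plane matching on $R \cap f\lIm R \rIm$ and the case analysis on~$s$. On the other hand, the paper's $\FAUE$ argument lowers the ellipticity constant to $\min\{c,\, \inf \im F^x\}$, whereas your disk construction preserves~$c$; and your $\FAE$ argument works at fixed~$\varepsilon$ with an explicit strict inequality, so the case $\HM^{\vdim}(R) < \HM^{\vdim}(D)$ (where the $\FAE$ hypothesis on the approximants~$f_k\lIm S \rIm$ is vacuous and a naive limit would collapse $>$ to $\ge$) is handled transparently by the compensating disk.
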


\begin{proof}
    Since $\mathcal P_2 \subseteq \mathcal P_1$ we clearly have $\FAE_x(\mathcal
    P_1) \subseteq \FAE_x(\mathcal P_2)$ and $\FAUE_x(\mathcal P_1) \subseteq
    \FAUE_x(\mathcal P_2)$. Hence, it suffices to prove the reverse
    inclusions. Take any test pair $(S,D) \in \mathcal P_1$ and set
    \begin{displaymath}
        T = \Tan(D,0) \,,
        \quad
        B = T \cap \cball 01 \,,
        \quad
        R = \mathcal R(S) \,,
        \quad \text{and} \quad
        I = \mathcal U(S) \,.
    \end{displaymath}
    For each $k \in \nat$ apply Lemma~\ref{lem:mae:rect-test-pair} with $\varepsilon =
    1/k$ to obtain a~map~$f_k : \R^{\adim} \to \R^{\adim}$ satisfying
    \begin{gather}
        \Lip f_k < \infty \,,
        \quad
        f_k(x) = x \quad \text{for $x \in B$} \,,
        \\
        \HM^{\vdim}(f \lIm I \rIm) = 0 \,,
        \quad
        \HM^{\vdim}\bigl( (R \without f_k \lIm R \rIm) \cup (f_k \lIm R \rIm \without R) \bigr) \le \tfrac 1k \,.
    \end{gather}
    Then $(S_k,D) = (f_k \lIm S \rIm, D)$ is a rectifiable test pair for each $k
    \in \nat$, hence for any integrand $F$ we~have
    \begin{displaymath}
        \Psi_{F^x}(S_k) - \Psi_{F^x}(D) = \Phi_{F^x}(S_k) - \Phi_{F^x}(D) \,.
    \end{displaymath}
    Observe that
    \begin{displaymath}
        \bigl| \lim_{k \to \infty} \HM^{\vdim}(S_k) - \HM^{\vdim}(R) \bigr| = 0 \,;
        \quad \text{hence, also} \quad
        \bigl| \lim_{k \to \infty} \Phi_{F^x}^{\vdim}(S_k) - \Phi_{F^x}^{\vdim}(R) \bigr| = 0 \,.
    \end{displaymath}
    Thus, if $F \in \FAUE_x(\mathcal P_2)$, then
    \begin{multline*}
        \Psi_{F^x}(S) - \Psi_{F^x}(D) = \Psi_{F^x}(I) + \lim_{k \to \infty} \Phi_{F^x}(S_k) - \Phi_{F^x}(D)
        \\
        \ge \Psi_{F^x}(I) + c \bigl( \HM^{\vdim}(R) - \HM^{\vdim}(D) \bigr)
        \\
        \ge \inf \bigl( \{c\} \cup \im F^x \bigr) \bigl( \HM^{\vdim}(S) - \HM^{\vdim}(D) \bigr)
        \,.
    \end{multline*}
    Similarly, if $F \in \FAE_x(\mathcal P_2)$, then
    \begin{displaymath}
        \Psi_{F^x}(S) - \Psi_{F^x}(D) = \Psi_{F^x}(I) + \lim_{k \to \infty} \Phi_{F^x}(S_k) - \Phi_{F^x}(D) > \Psi_{F^x}(I) \ge 0 \,.
        \qedhere
    \end{displaymath}
\end{proof}

\begin{remark}
    Recalling Remark~\ref{rem:test-pairs}, from Corollary~\ref{cor:mae:equiv} we
    deduce that definitions~\cite[IV.1(7)]{Alm76} and~\cite[3.16]{FangKol2017}
    are equivalent.
\end{remark}

\section{Existence of a minimiser for an integrand in $\FwBC$}
\label{Plateau}
In this section we provide a solution to the set theoretical formulation of the
anisotropic Plateau problem under the assumption $F \in \FwBC$. Since~$\FwBC$
will be proven to be equivalent to~$\FAC$, see Lemma~\ref{lem:AC-eq-BC}, this
section reproves~\cite[Theorem 1.8]{DDG2017b} without referring to the results
of~\cite{DDG2016rect}.
\begin{definition}
    \label{def:adm-deform}
    Let $U \subseteq \R^{\adim}$ be open. We say that $f : \R^{\adim} \to \R^{\adim}$ is
    a~\emph{basic deformation in~$U$} if $f$ is of class~$\cnt^1$ and there
    exists a bounded convex open set $V \subseteq U$ such that
    \begin{displaymath}
        f(x) = x \quad \text{for every $x \in \R^{\adim} \without V$}
        \quad \text{and} \quad
        f \lIm V \rIm  \subseteq V \,.
    \end{displaymath}
    If $f \in \cnt^1(\R^{\adim},\R^{\adim})$ is a composition of a finite number of basic
    deformations, then we say that $f$ is an~\emph{admissible deformation
      in~$U$}. The set of all such deformations shall be denoted $\adm{U}$.
\end{definition}

\begin{definition}[\protect{cf.~\cite[2.10.21]{Fed69}}]
    Whenever $K \subseteq \R^{\adim}$ is compact and $A,B \subseteq \R^{\adim}$, we
    define~$\HDK{K}(A,B)$ by
    \begin{align}
      \HDK{K}(A,B)
      &= \sup \bigl\{ | \dist(x,A) - \dist(x,B)| : x \in K \bigr\}
      \\
      &= \max \bigl\{
        \sup \{ \dist(x,A) : x \in K \cap B \} \,,\,
        \sup \{ \dist(x,B) : x \in K \cap A \}
        \bigr\} 
        \,.
    \end{align}
\end{definition}

\begin{definition}
    \label{def:goodclass}
    Let $U \subseteq \R^{\adim}$ be an open set. We say that $\mathcal C$ is
    a~\emph{good class in $U$} if
    \begin{enumerate}
    \item
        \label{i:gc:nonempty}
        $\mathcal C \ne \varnothing$;
    \item
        \label{i:gc:compact}
        each $S \in \mathcal C$ is a closed subset of~$\R^{\adim}$;
    \item
        \label{i:gc:deformation}
        if $S \in \mathcal C$ and $f \in \adm{U}$, then $f \lIm S \rIm \in
        \mathcal C$;
    \end{enumerate}
\end{definition}

\begin{remark}
    Definition~\ref{def:goodclass} differs from~\cite[3.4]{FangKol2017} by not
    assuming that the class is closed under Hausdorff convergence.
\end{remark}

Combining~\cite[11.2, 11.3, 11.7, 11.8(a)]{FangKol2017} we obtain the following.

\begin{theorem}
    \label{thm:FK}
    Let $U \subset \R^\adim$ be an open set, $\mathcal C$ be a~good class in~$U$,
    and $F$ be a~bounded $\cnt^0$~integrand. Set $\mu = \inf\bigl\{ \Phi_F(T
    \cap U) : T \in \mathcal C \bigr\}$.

    If $\mu \in (0,\infty)$, then there exist $V \in \Var{\vdim}(U)$, $S
    \subseteq \R^\adim$ closed, and $\{ S_i \in \mathcal C : i \in \nat \}$ such
    that
    \begin{enumerate}
    \item
        \label{i:FK:rect}
        $S \cap U$ is $(\HM^{\vdim},\vdim)$ rectifiable. In~particular
        $\HM^{\vdim}(S \cap U) < \infty$.

    \item
        \label{i:FK:var-lim}
        $\lim_{i \to \infty} \var{\vdim}(S_i \cap U) = V$ in $\Var{m}(U)$.

    \item
        \label{i:FK:min}
        $\lim_{i \to \infty} \Phi_F(S_i \cap U) = \Phi_F(V) = \mu$.

    \item $\spt \|V\| \subseteq S \cap U$ and $\HM^{\vdim}(S \cap U \without
        \spt \|V\|) = 0$.

    \item 
        \label{i:FK:abs-cont}
        The measures $\|V\|$ and $\HM^{\vdim} \restrict S$ are mutually
        absolutely continuous.

    \item $\lim_{i \to \infty} \HDK{K}(S_i \cap U, S \cap U) = 0$ for any
        compact set $K \subseteq U$.

    \item
        \label{i:FK:HM-far}
        For any compact set $K \subseteq U$ we have
        \begin{displaymath}
            \lim_{i \to \infty} \sup \bigl\{
            r \in \R : \HM^m(\{ x \in S_i \cap K : \dist(x, \spt \|V\| \cup \R^\adim \without U) \ge r \}) > 0
            \bigr\} = 0 \,.
        \end{displaymath}
        
    \item
        \label{i:FK:unrect}
        If $\bar S_i = \mathcal U(S_i \cap U)$, then
       $$
            \lim_{r \downarrow 0} \lim_{i \to \infty} r^{-\vdim} \HM^{\vdim}(\bar S_i \cap \cball xr) = 0
            \quad \text{for $\|V\|$-a.e.~$x$} \quad
            \text{and} \quad
            \lim_{i \to \infty} \HM^{\vdim}(\bar S_i) = 0 \,.
       $$

    \item $\density^{\vdim}(\|V\|,x) \ge 1$ for $\|V\|$ almost all~$x$.

    \item 
        \label{i:FK:Tan}
        For $\HM^{\vdim}$ almost all~$x \in \spt \|V\|$ we have
        \begin{displaymath}
            \Tan^{\vdim}(\|V\|,x) = \Tan(\spt\|V\|,x) \in \grass{\adim}{\vdim} \,.
        \end{displaymath}

    \item
        \label{i:FK:compact}
        If $\R^{\adim} \without U$ is compact and there exists
        a~$\Phi_F$-minimising sequence in $\mathcal C$ consisting only of compact
        sets (but not necessarily uniformly bounded), then
        \begin{displaymath}
            \diam(\spt\|V\|) < \infty
            \quad \text{and} \quad
            \sup \bigl\{ \diam(S_i \cap U) : i \in \nat \bigr\} < \infty \,.
        \end{displaymath}
    \end{enumerate}
\end{theorem}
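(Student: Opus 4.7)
The plan is to extract a minimising sequence $\{S_i\} \subseteq \mathcal C$ with $\lim_i \Phi_F(S_i \cap U) = \mu$, pass to a weak varifold limit using Allard's compactness, and then upgrade the limit to a closed rectifiable set with the claimed properties by exploiting that $\mathcal C$ is stable under all maps in $\adm{U}$. First I would observe that since $F$ is bounded and $\mu < \infty$, the masses $\HM^{\vdim}(S_i \cap K)$ are uniformly bounded on every compact $K \subseteq U$; setting $V_i = \var_{\vdim}(S_i \cap U)$, the sequence $\{V_i\}$ has locally bounded mass in~$U$, so a subsequence converges weakly to some $V \in \Var{\vdim}(U)$. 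Lower semicontinuity of $\Phi_F$ with bounded continuous $F$ gives $\Phi_F(V) \le \mu$, and equality follows once (iii) is justified.

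The closed set $S$ should be defined so that $\spt \|V\| \subseteq S \cap U$ and the sequence converges to~$S$ locally in Hausdorff distance; the natural choice is essentially the ``Hausdorff limit'' of the $S_i$ (after passing to a further subsequence by a diagonal argument on an exhaustion of~$U$ by compacts), possibly enlarged by $\R^{\adim} \without U$. The key technical heart is (i) and (viii): to show $S \cap U$ is $(\HM^{\vdim},\vdim)$-rectifiable and that the unrectifiable parts $\bar S_i = \mathcal U(S_i \cap U)$ asymptotically vanish. This is where the deformation Theorem~\ref{thm:mae:dt} is essential: given any point $x$ around which the unrectifiable mass of $S_i$ does not vanish at some scale, I would build, at each scale, an admissible deformation supported in a small cube and mapping $S_i$ to a competitor whose $\Phi_F$ is strictly smaller by a definite amount (using that $F$ is bounded below, that $\HM^{\vdim}(f(1,\cdot)\lIm U \rIm \cap G) = 0$, and the uniform image estimates from~\ref{thm:mae:dt}). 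Since $\{S_i\}$ is minimising, this yields (viii), and the rectifiable limit of $\var_{\vdim}(\mathcal R(S_i \cap U))$ together with closure of rectifiable varifolds under weak limits with density bounds gives rectifiability of~$V$; then (ix)--(x) come from the Allard rectifiability / Preiss theory applied to the $\vdim$-rectifiable part, while mutual absolute continuity of $\|V\|$ and $\HM^{\vdim}\restrict S$ in (v) follows from upper and lower density bounds extracted via comparison with admissible deformations.

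For (vii) and the Hausdorff-convergence statement, I would argue by contradiction: if there remained mass of~$S_i$ at positive distance from $\spt\|V\| \cup (\R^{\adim} \without U)$, I could apply the deformation theorem on a cube around such a point to create a competitor beating~$\mu$, contradicting the minimising property; this is essentially the ``concentration of mass'' step that forces the~$S_i$ to cluster onto $\spt\|V\|$. Finally, for the compactness assertion~(xi), assuming a minimising sequence of compact sets with possibly unbounded diameter, I would use admissible deformations that retract far-out pieces radially towards a fixed large ball (possible because $\R^{\adim} \without U$ is compact): each such deformation is in $\adm{U}$, cannot increase $\Phi_F$ much, and combined with the deformation theorem produces a uniformly bounded-diameter minimising sequence; the same bound transfers to $\spt\|V\|$ by Hausdorff convergence on compacts.

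The main obstacle I expect is the interplay in (viii): proving that a positive fraction of unrectifiable mass can always be strictly decreased by an admissible deformation. The unrectifiability-preserving feature of Theorem~\ref{thm:mae:dt} is tailored for exactly this — after the first stage of the deformation the unrectifiable part is carried onto a $\vdim$-dimensional skeleton of measure zero, while the rectifiable part grows only by a controlled factor, so the bounded-below integrand~$F$ yields a definite energy decrease that contradicts minimality; making this quantitative at typical points of~$\bar S_i$ (so as to conclude vanishing densities for $\|V\|$-a.e.\ $x$) is the delicate part.
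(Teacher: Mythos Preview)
The paper does not prove this theorem: immediately before the statement it says ``Combining~\cite[11.2, 11.3, 11.7, 11.8(a)]{FangKol2017} we obtain the following,'' and there is no further argument. Theorem~\ref{thm:FK} is treated here as a black-box import from the cited reference. Your proposal, by contrast, sketches an actual proof from scratch, so the two are not really comparable at the level of ``approach.''

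That said, your outline is a plausible reconstruction of what the cited results in~\cite{FangKol2017} contain: varifold compactness on a minimising sequence, then repeated use of a Federer--Fleming-type deformation to force concentration onto $\spt\|V\|$, kill unrectifiable mass, and obtain lower density bounds. One small inaccuracy: you invoke the unrectifiability-preserving feature of Theorem~\ref{thm:mae:dt} for item~(viii), but that feature is not needed here (it is used in Section~\ref{sec:rect-test-pairs} for a different purpose); what matters for~(viii) is only that the deformation annihilates $\HM^{\vdim}$ of the unrectifiable part inside~$G$ while controlling the image measure of the rest, which already the classical deformation theorems in~\cite{FangKol2017} provide. Also, your step ``closure of rectifiable varifolds under weak limits with density bounds gives rectifiability of~$V$'' hides the real work: rectifiability of the limit does not follow from rectifiability of the approximants alone, and in~\cite{FangKol2017} it is obtained via the lower density bound~(ix) together with Preiss/Mattila-type results, exactly the route you allude to in passing. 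If you were writing this up you would need to make that dependence explicit.
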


\begin{lemma}
    \label{lem:fst-var-mini}
    Assume $U \subseteq \R^{\adim}$ is open, $V \in \Var{\vdim}(U)$, $\mathcal
    C$ is a good class, $F$ is a bounded $\cnt^0$~integrand, $\mu = \inf\{
    \Phi_F(P) : P \in \mathcal C \}$, $\Phi_{F}(V) = \mu$, and either $V =
    \var{\vdim}(S \cap U)$ for some $(\HM^{\vdim},\vdim)$~rectifiable set $S \in
    \mathcal C$, or there exists a sequence $\{ S_i \in \mathcal C : i \in
    \nat\}$ such that
    \begin{displaymath}
        \lim_{i \to \infty} \var{\vdim}(S_i \cap U) = V 
        \quad \text{and} \quad
        \lim_{j \to \infty} \HM^{\vdim}(\mathcal U(S_j \cap U)) = 0 \,.
    \end{displaymath}
    Then $\delta_F V  = 0 \,.$
\end{lemma}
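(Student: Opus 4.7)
The plan is to exploit the defining closure property~\ref{i:gc:deformation} of a good class: applying an admissible deformation to an element of~$\mathcal{C}$ produces another element of~$\mathcal{C}$, so it cannot push the energy strictly below~$\mu$. Given any $g \in \VF(U)$ with compact support, I would choose a bounded convex open set $W \subseteq U$ with $\spt g \subseteq W$ and observe that for all sufficiently small $|t|$ the map $\varphi_t(x) = x + tg(x)$ is a $\cnt^1$ diffeomorphism of~$\R^\adim$, equals the identity outside~$W$, and sends $W$ into itself; hence $\varphi_t \in \adm{U}$. The definition of~$\delta_F V(g)$ then reduces the lemma to proving the one-sided inequality $\Phi_F((\varphi_t)_\# V) \ge \Phi_F(V) = \mu$ for all small $|t|$, since replacing $g$ by $-g$ gives the reverse inequality and differentiating at $t=0$ yields $\delta_F V(g) = 0$.

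In the rectifiable case, I would unwind Remark~\ref{rem:pull-back}: injectivity of $\varphi_t$ together with rectifiability of~$S \cap U$ gives $(\varphi_t)_\# V = \var{\vdim}(\varphi_t \lIm S \cap U \rIm)$, and the identity $\varphi_t \lIm S \rIm \cap U = \varphi_t \lIm S \cap U \rIm$ (which follows from $\varphi_t \lIm W \rIm \subseteq W \subseteq U$) identifies this with $\var{\vdim}(\varphi_t \lIm S \rIm \cap U)$. Because $\varphi_t \lIm S \rIm \in \mathcal{C}$, the inequality $\Phi_F((\varphi_t)_\# V) \ge \mu$ is immediate. For the limit case, I would use the continuity of varifold pushforward under varifold convergence to get $(\varphi_t)_\# \var{\vdim}(S_i \cap U) \to (\varphi_t)_\# V$ and then, since $F$ is bounded and compactly supported arguments apply after localising to~$W$, conclude $\Phi_F((\varphi_t)_\# V) = \lim_i \Phi_F((\varphi_t)_\# \var{\vdim}(S_i \cap U))$. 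Splitting $S_i \cap U$ into $\mathcal{R}(S_i \cap U)$ and $\mathcal{U}(S_i \cap U)$, the pushforward of the rectifiable part coincides with $\var{\vdim}$ of its image, while the (unrectifiable) difference between $(\varphi_t)_\#\var{\vdim}(\mathcal{U}(S_i \cap U))$ and $\var{\vdim}(\varphi_t\lIm \mathcal{U}(S_i \cap U)\rIm)$ has $\Phi_F$-energy bounded by $(\sup \im F)(\Lip \varphi_t)^\vdim \HM^\vdim(\mathcal{U}(S_i \cap U))$, which vanishes in the limit. Combining these yields $\Phi_F((\varphi_t)_\# V) = \lim_i \Phi_F(\varphi_t\lIm S_i \rIm \cap U) \ge \mu$, as desired.

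The main obstacle I expect is the bookkeeping in the second case: one must show that the smallness of $\HM^\vdim(\mathcal{U}(S_i\cap U))$ is sufficient to reconcile the two natural ``images'' of an almost-unrectifiable set under~$\varphi_t$, namely the varifold pushforward $(\varphi_t)_\#\var{\vdim}(S_i\cap U)$ (where the Haar-distributed tangent planes on the unrectifiable part get twisted by $\uD\varphi_t$) and the set-theoretic image $\var{\vdim}(\varphi_t\lIm S_i\cap U\rIm)$ (whose unrectifiable part again carries Haar-distributed planes). Boundedness of~$F$ makes the discrepancy between the two controllable by $\HM^\vdim(\mathcal{U}(S_i\cap U))$, but one has to verify carefully that this small-measure bound is uniform in~$t$ in a neighbourhood of~$0$, so that the inequality $\Phi_F((\varphi_t)_\# V) \ge \mu$ can be differentiated at $t=0$ to conclude $\delta_F V(g) \ge 0$.
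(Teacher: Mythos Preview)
Your approach is essentially the same as the paper's, but there is one genuine gap in the setup. You write ``choose a bounded convex open set $W \subseteq U$ with $\spt g \subseteq W$''---but such a $W$ need not exist: $\spt g$ is merely compact in~$U$, and if $U$ is, say, an annulus and $\spt g$ encircles the hole, no convex $W \subseteq U$ can contain it. Without this $W$ you cannot conclude $\varphi_t \in \adm{U}$, since the definition of a basic deformation requires the map to fix everything outside a bounded convex open subset of~$U$.

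The paper handles exactly this point by a partition of unity: one writes $g = \sum_{i=1}^N g_i$ with each $g_i$ supported in a ball contained in~$U$, and since $\delta_F V$ is linear, if $\delta_F V(g) \ne 0$ then $\delta_F V(g_i) \ne 0$ for some~$i$. One then runs your argument with $h = g_i$ in place of~$g$ (now the convex $W$ is simply that ball). After this reduction the rest of your argument---the rectifiable case via Remark~\ref{rem:pull-back}, and the limit case by continuity of pushforward plus the splitting into $\mathcal R(S_i \cap U)$ and $\mathcal U(S_i \cap U)$ with the unrectifiable discrepancy controlled by $(\sup \im F)(\Lip \varphi_t)^{\vdim} \HM^{\vdim}(\mathcal U(S_i \cap U)) \to 0$---matches the paper's proof. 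Your final worry about uniformity in~$t$ is not really an obstacle: the paper (and you) only need the inequality at a single fixed small~$t_0$, not a differentiable family; once $\Phi_F((\varphi_{t_0})_\# V) < \mu$ is contradicted, you are done.
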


\begin{proof}
    The proof can be found, with a slightly different notation, in
      \cite[Section 5.1]{DeR2016}. For the sake of the exposition we report it
      below.
    
    Assume there exists $g \in \VF(U)$ such that $\delta_F V(g) \ne 0$. Since
    $\spt g$ is compact, using a~partition of unity~\cite[3.1.13]{Fed69} one can
    decompose $g$ into a~finite sum $g = \sum_{i=1}^{N} g_i$, where $g_i \in
    \VF(U)$ is supported in some ball contained in~$U$ for each $i \in \{
    1,2,\ldots, N \}$.  Recalling that $\delta_F V$ is linear we see that there
    exists an $i \in \{1,2,\ldots,N\}$ such that $\delta_F V(g_i) \ne 0$. Set $h
    = g_i$ and $\varphi_t(x) = x + th(x)$ for $x \in U$ and $t$ in some
    neighbourhood of~$0$ in~$\R$. Clearly $\varphi_t \in \adm{U}$ is an
    injective admissible map whenever $|t|$ is small enough. Replacing possibly
    $h$ with $-h$ we shall assume that $\delta_F V(h) < 0$. Then there exists
    $t_0 > 0$ such that $\Phi_F( (\varphi_t)_\# V ) < \Phi_F(V) = \mu$ for $t
    \in (0,t_0]$. Set $\psi = \varphi_{t_0}$.

    In case $V = \var{\vdim}(S)$ for some $(\HM^{\vdim},\vdim)$~rectifiable set
    $S \in \mathcal C$, we have
    \begin{displaymath}
        \mu = \Phi_F(V) > \Phi_F( \psi_\# V ) = \Phi_F( \psi\lIm S \rIm ),
    \end{displaymath}
    which contradicts the definition of~$\mu$.

    In the other case, since $\psi_{\#} : \Var{\vdim}(U) \to \Var{\vdim}(U)$ is
    continuous and $V$ equals the limit $\lim_{j \to \infty} \var{\vdim}(S_j
    \cap U)$, we have also $\psi_{\#}V = \lim_{j \to \infty} \psi_{\#}
    \var{\vdim}(S_j \cap U)$.  For $j \in \nat$ we set $\bar S_j = \mathcal
    U(S_j \cap U)$ and $\hat S_j = \mathcal R(S_j \cap U)$ to obtain
    \begin{multline*}
        \mu > \lim_{j \to \infty} \Phi_F(\psi_{\#} \var{\vdim}(S_j \cap U))
        \ge \lim_{j \to \infty} \Phi_F(\psi_{\#} \var{\vdim}(\hat S_j)) 
        = \lim_{j \to \infty} \Phi_F(\var{\vdim}(\psi \lIm \hat S_j \rIm)) 
        \\
        = \lim_{j \to \infty} \Phi_F(\psi \lIm S_j \cap U \rIm) 
        - \Phi_F(\psi \lIm \bar S_j \rIm) 
        \,.
    \end{multline*}
    Since $\lim_{j \to \infty} \HM^{\vdim}(\bar S_j) = 0$, we see that $\mu >
    \lim_{j \to \infty} \Phi_F(\psi \lIm S_j \cap U \rIm)$ which contradicts the
    definition of~$\mu$.
\end{proof}

\begin{theorem}
    \label{thm:wbc-exist}
    Assume $U$, $\mathcal C$, $F$, $\mu$, $V$, $S$, and~$\{ S_i : i \in \nat\}$
    are as in Theorem~\ref{thm:FK} and that $F \in \FwBC$. Then
    \begin{enumerate}
    \item
        \label{i:wbc:tan}
        $T = \Tan^{\vdim}(\|V\|,x)$ for $V$ almost all $(x,T)$.
    \item
        \label{i:wbc:dens}
        $\density^{\vdim}(\|V\|,x) = 1$ for $\|V\|$ almost all $x$.
    \end{enumerate}
    In~particular, $V = \var{\vdim}(S)$.
\end{theorem}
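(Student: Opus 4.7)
The starting observation is that Theorem~\ref{thm:FK}~\ref{i:FK:unrect} provides exactly the hypothesis $\lim_{i\to\infty} \HM^\vdim(\mathcal U(S_i \cap U)) = 0$ needed to invoke the second alternative of Lemma~\ref{lem:fst-var-mini}; applied to~$V$ and the sequence $\{S_i\}$ it yields $\delta_F V = 0$, and this vanishing is the engine that drives the rest of the argument.

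For~\ref{i:wbc:tan}, I~would first disintegrate $V = \|V\| \otimes \mu_x$, where $\mu_x$ is a~Borel probability measure on $\grass{\adim}{\vdim}$ defined for $\|V\|$-almost every $x \in U$. Combining Theorem~\ref{thm:FK}~\ref{i:FK:rect},~\ref{i:FK:abs-cont}, and~\ref{i:FK:Tan} one obtains, at $\|V\|$-almost every $x \in \spt\|V\|$, the existence of the tangent plane $T_x := \Tan^\vdim(\|V\|,x) = \Tan(\spt\|V\|,x)$ and of a~positive finite density $\theta(x) := \density^\vdim(\|V\|,x)$. A~standard blow-up argument, using the translation-dilation scaling of~$\delta_F$ together with the $\cnt^1$ regularity of~$F$ (so that $F$ is replaced by~$F^x$ in the rescaled limit), shows that the rescaled varifolds $r^{-\vdim}(\trans{-x} \circ \scale{1/r})_\# V$ converge, as $r \downarrow 0$, to $W_x := \theta(x)(\HM^\vdim \restrict T_x) \times \mu_x$, and that $\delta_{F^x} W_x = 0$. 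Linearity of $\delta_{F^x}$ in the varifold and $\theta(x) > 0$ then give $\delta_{F^x}\bigl((\HM^\vdim \restrict T_x) \times \mu_x\bigr) = 0$, so Definition~\ref{def:BC} applied with $T = T_x$ and $\mu = \mu_x$ forces $\mu_x = \Dirac{T_x}$, which is exactly~\ref{i:wbc:tan}.

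Once~\ref{i:wbc:tan} is established, $V = \theta\,\var{\vdim}(S \cap U)$ with $\theta \ge 1$ $\HM^\vdim$-a.e.\ on $S \cap U$, using Theorem~\ref{thm:FK}~\ref{i:FK:abs-cont} and the density lower bound of Theorem~\ref{thm:FK}. For~\ref{i:wbc:dens} I~plan to compare $V$ with the unit-density candidate $\var{\vdim}(S \cap U)$: writing $\Phi_F(V) = \int F(x,T_x)\,\theta\,d\HM^\vdim \restrict S$ and $\Phi_F(\var{\vdim}(S \cap U)) = \int F(x,T_x)\,d\HM^\vdim \restrict S$, the positivity of~$F$ shows the former strictly exceeds the latter whenever $\theta > 1$ on a set of positive $\HM^\vdim$-measure. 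To reach a~contradiction, one must exhibit $\var{\vdim}(S \cap U)$ as a~weak varifold limit of $\var{\vdim}(S_i' \cap U)$ for some sequence $\{S_i'\} \subseteq \mathcal C$, constructed from $\{S_i\}$ by admissible deformations that collapse the ``multiple sheets'' of $S_i$ onto~$S$; the Hausdorff convergence $S_i \to S$ on compact subsets of~$U$ (from Theorem~\ref{thm:FK}) together with the deformation theorem~\ref{thm:mae:dt} makes such a~construction possible, after which the definition of~$\mu$ forces $\Phi_F(\var{\vdim}(S \cap U)) \ge \mu$ and closes the argument. I~expect this last step---producing admissible collapsing competitors while staying in the class~$\mathcal C$---to be the main technical obstacle, whereas part~\ref{i:wbc:tan} is a~direct application of the disintegration machinery combined with the~$\FwBC$ hypothesis.
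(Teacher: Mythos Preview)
Your treatment of part~\ref{i:wbc:tan} is essentially the paper's argument: Lemma~\ref{lem:fst-var-mini} together with Theorem~\ref{thm:FK}\ref{i:FK:unrect} gives $\delta_F V = 0$, a~blow-up at a~generic point produces a~tangent varifold $W_x = \theta(x)\,(\HM^\vdim \restrict T_x) \times \mu_x$ with $\delta_{F^x} W_x = 0$, and the~$\FBC$ condition forces $\mu_x = \Dirac{T_x}$.

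For part~\ref{i:wbc:dens} your overall strategy---observe $\Phi_F(\var{\vdim}(S\cap U)) < \Phi_F(V) = \mu$ if $\theta > 1$ on a~set of positive measure, then produce competitors in~$\mathcal C$ realising the smaller value---is the right instinct, but the proposed construction has a~real gap. Invoking the deformation theorem~\ref{thm:mae:dt} does not help here: that theorem pushes sets onto cubical $\vdim$-skeleta with a~measure bound carrying a~constant $\Gamma > 1$, so it neither ``collapses sheets onto~$S$'' nor decreases measure. A~global collapse onto~$S$ is also problematic because $S$ is not known to be smooth (no tubular neighbourhood, no nearest-point projection), and the maps produced by~\ref{thm:mae:dt} are Lipschitz rather than admissible in the sense of Definition~\ref{def:adm-deform}.

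The paper sidesteps all of this by working \emph{locally} at a~single point~$x$ where $\vartheta := \density^\vdim(\|V\|,x) > 1$ and $T := \Tan(\spt\|V\|,x) \in \grass{\adim}{\vdim}$ exists (Theorem~\ref{thm:FK}\ref{i:FK:Tan}). Since $\spt\|V\|$ is tangent to the \emph{flat} plane~$T$ at~$x$, for small~$r$ the set $\spt\|V\| \cap \cball{x}{r}$ lies in a~slab of height $o(r)$ about~$T$. One writes down an explicit smooth map $\tilde p_r \in \adm{U}$ which equals the orthogonal projection $\project{T}$ on the part of this slab inside $\cball{x}{(1-\varepsilon_r)r}$, equals the identity outside $\cball{x}{r}$, and interpolates in between with uniformly bounded Lipschitz constant. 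Applying $\tilde p_r$ to~$S_i$ replaces $S_i \cap \cball{x}{(1-\varepsilon_r)r}$ by its $\project{T}$-image, whose $\HM^\vdim$-measure is at most $\unitmeasure{\vdim} r^\vdim$, while the original piece has measure close to $\vartheta\,\unitmeasure{\vdim} r^\vdim$; thus one gains roughly $(\vartheta - 1)\,\unitmeasure{\vdim} r^\vdim F(x,T)$ of energy, and the annular interpolation errors are shown to vanish after taking $i \to \infty$ and then $r \downarrow 0$. This yields $\Phi_F(\tilde p_r[S_i \cap U]) < \mu$ for suitable~$r$ and large~$i$, contradicting the definition of~$\mu$. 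The flatness of~$T$ is what makes the projection both an admissible deformation and genuinely measure-decreasing---precisely the feature your global scheme lacks.
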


\begin{proof}
    \textit{Proof of~\ref{i:wbc:tan}.} Employing Lemma~\ref{lem:fst-var-mini} together
    with~\cite[2.3, 2.4]{DDG2016rect}
    and Theorem~\ref{thm:FK}\ref{i:FK:rect}\ref{i:FK:var-lim}\ref{i:FK:min}\ref{i:FK:abs-cont}\ref{i:FK:unrect}
    we see that for $\|V\|$ almost all~$x$ and all $W \in \VarTan(V,x)$ there
    exists a Radon probability measure~$\sigma$ over~$\grass{\adim}{\vdim}$ such
    that
    \begin{gather}
        \label{eq:wbc:x-cond1}
        \Tan^{\vdim}(\|V\|,x) = T \in \grass{\adim}{\vdim} \,,
        \quad
        \density^{\vdim}(\|V\|,x) = \vartheta \in [1,\infty) \,,
        \\
        \label{eq:wbc:x-cond2}
        W = \vartheta (\HM^{\vdim} \restrict T) \times \sigma \,,
        \quad \text{and} \quad
        \delta_{F^x} W = 0 \,.
    \end{gather}
    Since $F \in \FwBC$ it follows that $\VarTan(V,x) = \{
    \density^{\vdim}(\|V\|,x)\var{\vdim}(\Tan^{\vdim}(\|V\|,x)) \}$ for $\|V\|$
    almost all~$x$ which proves~\ref{i:wbc:tan}.

    \textit{Proof of~\ref{i:wbc:dens}.} Let $T \in \grass{\adim}{\vdim}$ and
    $\vartheta \in [1,\infty)$
    satisfy~\eqref{eq:wbc:x-cond1}\eqref{eq:wbc:x-cond2}, and $x \in U$ be such
    that Theorem~\ref{thm:FK}\ref{i:FK:unrect}\ref{i:FK:Tan} hold. Without loss of
    generality we shall assume $x = 0$. Assume, by contradiction, that
    $\vartheta > 1$.  Define
    \begin{displaymath}
        \delta_r = \sup\left\{
            \frac{\dist(x,T)}{|x|} : x \in \spt \|V\| \cap \oball{x}{2r} \without \{ 0 \}
        \right\}
        \quad \text{for $r \in (0,\infty)$} \,.
    \end{displaymath}
    From Theorem~\ref{thm:FK}\ref{i:FK:Tan}, we see that $\delta_r \downarrow 0$ as $r
    \downarrow 0$. Set $\varepsilon_r = 12 \delta_r^{1/2}$. For $r \in (0,1)$
    let $f_r, h_r \in \cnt^{\infty}(\R,[0,1])$ be such that
    \begin{gather}
        f_r(t) = 1 \quad \forall t \le 1 - \varepsilon_r \,,
        \quad
        f_r(t) = 0 \quad \forall t \ge 1 - \tfrac 12 \varepsilon_r \,,
        \quad 
        |f_r'(t)| \le 4/\varepsilon_r \quad \forall t \in \R \,,
        \\
        h_r(t) = 1 \quad \forall t \le 2 \delta_r \,,
        \quad
        h_r(t) = 0 \quad \forall t \ge 3 \delta_r \,,
        \quad 
        |h_r'(t)| \le 2/\delta_r \quad \forall t \in \R \,.
    \end{gather}
    For $r \in (0,1)$ we define $p_r \in \cnt^{\infty}(\R^\adim,\R^\adim)$ by the
    formula
    \begin{gather}
        p_r(x) = \project{T}(x) + \bigl( 1- f_r(|\project{T}(x)|) h_r(|\perpproject{T}(x)|) \bigr) \perpproject{T}(x)
        \quad \text{for $x \in \R^\adim$} \,.
    \end{gather}
    Clearly $p_r \in \adm{U}$ for $r \in (0,1)$ small enough. Note also that
    \begin{gather}
        \begin{aligned}
            p_r(x) &= x 
            &&\text{for $x \in \R^{\vdim} \without ((T \cap \cball 0{1-\varepsilon_r/2}) + \cball 0{3\delta_r}) \subseteq \R^{\vdim} \without \oball 01$}
            \,,
            \\
            p_r(x) &= \project{T}x
            &&\text{for $x \in (T \cap \cball 0{1-\varepsilon_r}) + \cball 0{2\delta_r}$}
            \,,
        \end{aligned}
        \\
        \label{eq:wbc:Lip-p}
        \Lip  p_r \le 8 + 12 \frac{\delta_r}{\varepsilon_r} \le 8 +  \delta_r^{1/2} \le 9
        \quad \text{for $r \in (0,1)$} \,.
    \end{gather}
    Set $A_r = \cball 01 \without \oball 0{1 - \varepsilon_r}$ and $\tilde p_r =
    \scale{r} \circ p_r \circ \scale{1/r}$. Let $C \in \VarTan(V,0)$.
    By~\cite[3.4(2)]{All72} and~\ref{i:wbc:tan} we get
    \begin{equation}
        \label{eq:wbc:vartan}
        C = \lim_{r \downarrow 0} (\scale{1/r})_{\#}V 
        = \lim_{r \downarrow 0} \lim_{i \to \infty} \var{\vdim}(\scale{1/r} \lIm S_i \rIm)
        = \vartheta \var{\vdim}(T) \,;
    \end{equation}
    Hence, we have $\|C\|(\Bdry{\cball 01}) = 0$, which implies that
    \begin{displaymath}
        \lim_{r \downarrow 0} \lim_{i \to \infty} r^{-\vdim} \HM^{\vdim}( \scale{r} \lIm A_r \rIm \cap S_i ) = 0 \,.
    \end{displaymath}
    In particular, employing~\eqref{eq:wbc:Lip-p},
    \begin{equation}
        \label{eq:wbc:errors}
        \lim_{r \downarrow 0} \lim_{i \to \infty} r^{-\vdim} \Phi_F(\scale{r} \lIm A_r \rIm \cap S_i) = 0 
        \quad \text{and} \quad
        \lim_{r \downarrow 0} \lim_{i \to \infty} r^{-\vdim} \Phi_F(\tilde p_r \lIm  \scale{r} \lIm A_r \rIm \cap S_i \rIm) = 0 \,.
    \end{equation}
    For $r \in (0,1)$ and $i \in \nat$ we have
    \begin{multline}
        \label{eq:wbc:contr}
        \Phi_F(\tilde p_r \lIm S_i \cap U \rIm)
        = \Phi_F(S_i \cap U) 
        - \Phi_F(S_i \cap \cball 0{(1 - \varepsilon_r)r})
        \\
        + \Phi_F(\tilde p_r \lIm S_i \cap \cball 0{(1 - \varepsilon_r)r} \rIm)
        - \Phi_F(S_i \cap \scale{r} \lIm A_r \rIm) 
        + \Phi_F(\tilde p_r \lIm S_i \cap \scale{r} \lIm A_r \rIm \rIm) \,.
    \end{multline}
    Since $\lim_{i \to \infty} \Phi_F(S_i \cap U) = \mu$, taking into
    account~\eqref{eq:wbc:errors}, to reach a~contradiction it suffices to show
    that
    \begin{equation}
        \label{eq:wbc:pre-claim}
        \lim_{r \downarrow 0} \lim_{i \to \infty}
        r^{-\vdim} \Phi_F(\tilde p_r \lIm S_i \cap \cball 0{(1 - \varepsilon_r)r} \rIm) 
        - r^{-\vdim} \Phi_F(S_i \cap \cball 0{(1 - \varepsilon_r)r}) < 0 \,.
    \end{equation}
    For $i \in \nat$ and $r \in (0,1)$ we define
    \begin{displaymath}
        S_{r,i} = \scale{1/r} \lIm S_i \rIm  \cap \cball 01  \,,
        \quad
        F_r = \scale{r}^{\#} F \,,
        \quad \text{and} \quad
        \hat S_{r,i} = \mathcal R(S_{r,i}) \,.
    \end{displaymath}
    Observe that, using~\eqref{eq:wbc:errors} and Theorem~\ref{thm:FK}\ref{i:FK:unrect},
    claim~\eqref{eq:wbc:pre-claim} will follow from
    \begin{equation}
        \label{eq:wbc:claim}
        \lim_{r \downarrow 0} \lim_{i \to \infty} \Phi_{F_r}(\project T \lIm \hat S_{r,i} \rIm) -  \Phi_{F_r}( \hat S_{r,i}) < 0 \,.
    \end{equation}
    In order to prove~\eqref{eq:wbc:claim}, we observe that~\eqref{eq:wbc:vartan} implies
    \begin{displaymath}
        \lim_{r \downarrow 0} \lim_{i \to \infty} \int_{\cball 01} \| \project P - \project T\| \ud \var{\vdim}(\hat S_{r,i})(x,P) = 0 \,.
    \end{displaymath}
    Since $F$ is continuous, we obtain also
    \begin{equation}
        \label{eq:wbc:F-tilt}
        \lim_{r \downarrow 0} \lim_{i \to \infty} \int_{\cball 01} |F(z,P) - F(z,T)| \ud \var{\vdim}(\hat S_{r,i})(x,P) = 0 
        \quad \text{for any $z \in \R^\adim$} \,.
    \end{equation}
    We then estimate
    \begin{multline*}
        \Phi_{F_r}(\project T \lIm \hat S_{r,i} \rIm) -  \Phi_{F_r}( \hat S_{r,i}) 
        = \int_{\project T \lIm \hat S_{r,i} \rIm} F_r(y,T) \ud \HM^{\vdim}(y)
        - \int F_r(x,P) \ud \var{\vdim}(\hat S_{r,i})(x,P)
        \\
        \le \int_{\project T \lIm \hat S_{r,i} \rIm} F_r(0,T) \ud \HM^{\vdim}(y)
        - \int F_r(0,T) \ud \var{\vdim}(\hat S_{r,i})
        \\
        + \int_{\project T \lIm \hat S_{r,i} \rIm} |F_r(y,T) - F_r(0,T)| \ud \HM^{\vdim}(y)
        \\
        + \int |F_r(0,T) -  F_r(0,P)| + |F_r(0,P) -  F_r(x,P)| \ud \var{\vdim}(\hat S_{r,i})(x,P) \,.
    \end{multline*}
    Using continuity of~$F$ and~\eqref{eq:wbc:F-tilt}, we see that the last two
    terms converge to zero when we first take the limit with $i \to \infty$
    and then with $r \downarrow 0$. Therefore, 
    \begin{multline*}
        \lim_{r \downarrow 0} \lim_{i \to \infty}
        \Phi_{F_r}(\project T \lIm \hat S_{r,i} \rIm) -  \Phi_{F_r}( \hat S_{r,i})
        \\
        =
        \lim_{r \downarrow 0} \lim_{i \to \infty}
        \int_{\project T \lIm \hat S_{r,i} \rIm} F_r(0,T) \ud \HM^{\vdim}(y)
        - \int F_r(0,T) \ud \var{\vdim}(\hat S_{r,i})(x,P)
        \\
        = \lim_{r \downarrow 0} \lim_{i \to \infty}
        F_r(0,T) \bigl( 
        \HM^{\vdim} ( \project T \lIm \hat S_{r,i} \rIm ) 
        - \HM^{\vdim}( S_{r,i} )
        \bigr)
        \le \unitmeasure{\vdim} F_r(0,T) (1 - \vartheta) = - \kappa < 0 \,.
    \end{multline*}
    Thus, we have proved~\eqref{eq:wbc:claim}, which in turn
    implies~\eqref{eq:wbc:pre-claim}. Hence, recalling~\eqref{eq:wbc:contr}, we can
    choose $r \in (0,1)$ so that for all big enough $i \in \nat$
    \begin{displaymath}
        \Phi_F(\tilde p_r \lIm S_i \cap U \rIm) - \Phi_F(S_i \cap U) 
        < - \tfrac 12 \kappa r^{\vdim} \,.
    \end{displaymath}
    Up to choosing a bigger $i \in \nat$, we get~$\Phi_F(\tilde p_r \lIm S_i
    \cap U \rIm) < \mu$, which contradicts the definition of~$\mu$.
\end{proof}

\section{Equivalence of $\FBC$ and $\FAC$}
\label{equiv}
In this section we prove that the new condition $\FBC$ can be used in place
of~$\FAC$. First we prove a small lemma.

\begin{lemma}
    \label{lem:BC:k-ge-d}
    Let $F$ be an integrand of class~$\cnt^{1}$, $x \in \R^{\adim}$, $F \in
    \mathrm{BC}_x$, $\mu$ be a probability measure over $\grass{\adim}{\vdim}$,
    $k \in \nat$, $T \in \grass{\adim}{k}$, $W = (\HM^{k} \restrict T) \times
    \mu$. Then
    \begin{displaymath}
        \delta_{F^x} W = 0 \quad \implies \quad k \ge \vdim \,.
    \end{displaymath}
\end{lemma}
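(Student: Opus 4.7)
The plan is to compute $\delta_{F^x}W$ explicitly via Remark~\ref{rem:fst-ani-var}, derive from its vanishing a range condition on $A_x(\mu)$, and then --- assuming $k < \vdim$ --- invoke $F \in \FBC_x$ on two distinct $\vdim$-dimensional extensions of $T$ to reach a contradiction. Since $(F^x)_S(y) = F(x,S)$ is independent of $y$, we have $\uD(F^x)_S \equiv 0$ and, by inspection of the formula in Remark~\ref{rem:fst-ani-var}, $B_{F^x}(y,S) = B_F(x,S)$; hence, writing $A := A_x(\mu)$ and applying Fubini,
\begin{equation*}
    \delta_{F^x}W(g) = \int_T A \bullet \uD g(y)\, \ud\HM^k(y)
    \quad \text{for every } g \in \VF(\R^{\adim}).
\end{equation*}

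Testing with $g(y) = \varphi(y)\, e_i$ for $\varphi \in \dspace{\R^{\adim}}{\R}$ and a standard basis vector $e_i$, the right-hand side becomes $(A^* e_i) \cdot \int_T \nabla\varphi\, \ud\HM^k$. The tangential part of $\nabla\varphi$ integrates to zero along $T$ by the divergence theorem, while the normal part $\int_T \nabla_{T^\perp}\varphi\, \ud\HM^k$ can be made to equal any prescribed $v \in T^\perp$ by an appropriate choice of $\varphi$. Consequently, $\delta_{F^x}W = 0$ forces $A^* e_i \in T$ for every $i$, i.e.\ $T^\perp \subseteq \ker A$. Now assume, toward a contradiction, that $k < \vdim$ and first suppose $\vdim < \adim$. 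Since $\dim T^\perp = \adim - k > \vdim - k \ge 1$, one may pick two distinct $(\vdim - k)$-dimensional subspaces $U_1, U_2 \subseteq T^\perp$ and set $T_j = T \oplus U_j \in \grass{\adim}{\vdim}$. Then $T_j^\perp \subseteq T^\perp \subseteq \ker A$, so by the same calculation $\delta_{F^x}W_j = 0$ for $W_j = (\HM^{\vdim} \restrict T_j) \times \mu$; the hypothesis $F \in \FBC_x$ yields $\mu = \Dirac{T_1} = \Dirac{T_2}$, a contradiction.

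The remaining case $\vdim = \adim$ is handled directly: then $\grass{\adim}{\vdim} = \{\R^{\adim}\}$, so $\mu = \Dirac{\R^{\adim}}$, and with $\perpproject{\R^{\adim}} = 0$ and $\project{\R^{\adim}} = \id_{\R^{\adim}}$ the formula in Remark~\ref{rem:fst-ani-var} reduces to $A = F(x,\R^{\adim})\, \id_{\R^{\adim}}$; since $F > 0$, one has $\ker A = \{0\}$, contradicting $T^\perp \subseteq \ker A$ with $\dim T^\perp = \adim - k \ge 1$. The delicate point is the characterization $\delta_{F^x}W = 0 \iff T^\perp \subseteq \ker A$, which rests on the fact that normal derivatives of compactly supported test functions, integrated over $T$, realize arbitrary elements of $T^\perp$.
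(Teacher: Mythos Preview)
Your argument is correct but takes a different route from the paper. The paper never introduces $A_x(\mu)$ here: given $k < \vdim$ it picks $R \in \grass{\adim}{\vdim-k}$ with $R \perp T$, sets $V = (\HM^{\vdim}\restrict(T+R))\times\mu$, and shows $\delta_{F^x}V(g) = \int_R \delta_{F^x}W(g(v+\cdot))\,\ud\HM^{\vdim-k}(v) = 0$ directly by Fubini, so that $\FBC_x$ forces $\mu = \Dirac{T+R}$; varying $R$ then gives the contradiction. Your approach instead extracts the intermediate algebraic fact $T^{\perp}\subseteq\ker A_x(\mu)$ from $\delta_{F^x}W=0$, and then uses the converse of that same computation (that $T_j^{\perp}\subseteq\ker A_x(\mu)$ implies $\delta_{F^x}W_j=0$) to transfer stationarity to the $\vdim$-dimensional extensions $T_j$. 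The paper's Fubini reduction is slicker and avoids any explicit use of the $B_F$ formula, while your route has the advantage of making visible the link between $\FBC_x$ and the kernel of $A_x(\mu)$, which is exactly what is exploited in Lemma~\ref{lem:AC-eq-BC}. One stylistic point: when you write ``by the same calculation'' for $\delta_{F^x}W_j=0$, you are invoking the \emph{converse} direction of your test-function computation (tangential gradients integrate to zero, and $A^*e_i\in T_j$ kills the normal part); this is correct but worth stating explicitly since the forward direction alone does not give it.
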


\begin{proof}
    If $\vdim = \adim$, then $\grass{\adim}{\vdim}$ contains only one element so
    there is only one probability measure over $\grass{\adim}{\vdim}$ and the
    conclusion readily follows.

    Assume $1 \le \vdim < \adim$ and $k < d$. Choose $R \in \grass{\adim}{\vdim
      - k}$ such that $R \perp T$ and set $V = (\HM^{\vdim} \restrict (T+R))
    \times \mu$. We get
    \begin{multline}
        \delta_{F^x} V(g)
        = \int_R \int_T \int_{\grass{\adim}{\vdim}}
        B_F(u+v,S) \bullet \uD g(x)
        \ud \mu(S) \ud \HM^k(u) \ud \HM^{\vdim - k}(v)
        \\
        = \int_R \delta_{F^x}W(g(v + \cdot)) \ud \HM^{\vdim - k}(v) = 0
        \quad \text{for $g \in \VF(\R^{\adim})$} \,.
    \end{multline}
    Thus, $\delta_{F^x} V = 0$ and, since $F \in \mathrm{BC}_x$, we obtain $\mu
    = \Dirac{T + R}$. Since $R$ was chosen arbitrarily from
    $\grass{\adim}{\vdim} \cap \{ R : R \perp T \} \simeq \grass{\adim -
      k}{\vdim - k}$ which contains more than one element, we reach
    a~contradiction.
\end{proof}

\begin{lemma}
    \label{lem:AC-eq-BC}
    Let $x \in \R^\adim$. We have
    \begin{math}
        \FAC_x = \FBC_x \,.
    \end{math}
\end{lemma}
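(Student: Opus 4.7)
The plan is to reduce the first variation $\delta_{F^x}W$ on product varifolds $W = (\HM^\vdim \restrict T) \times \mu$ to a purely algebraic condition on $A_x(\mu)$, and then read off both inclusions from this identification. Since $F^x$ is independent of the spatial variable, Remark~\ref{rem:fst-ani-var} gives $\uD F^x_S \equiv 0$ and $B_{F^x}(y,S) = B_F(x,S)$ for every $y \in \R^\adim$; substituting into the definition of $\delta_{F^x}W$ produces
$$\delta_{F^x}W(g) = \int_T A_x(\mu) \bullet \uD g(y) \, d\HM^\vdim(y) \quad \text{for every } g \in \VF(\R^\adim).$$
Testing with $g(y) = \chi(y)w$ for $\chi \in \cnt^\infty_c(\R^\adim)$ and $w \in \R^\adim$ gives $\delta_{F^x}W(g) = \langle w, A_x(\mu)\int_T \nabla\chi\,d\HM^\vdim\rangle$. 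The divergence theorem on~$T$ applied to the compactly supported $\chi|_T$ yields $\int_T \project{T}\nabla\chi\,d\HM^\vdim = 0$, so the integral lies in $T^\perp$; conversely, taking $\chi(y) = \eta(\perpproject{T}y)\zeta(\project{T}y)$ one computes $\int_T\nabla\chi\,d\HM^\vdim = \nabla\eta(0)\int_T\zeta\,d\HM^\vdim$, and varying~$\eta$ and~$\zeta$ realizes every vector in~$T^\perp$. I therefore conclude that $\delta_{F^x}W = 0$ if and only if $T^\perp \subseteq \ker A_x(\mu)$.

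A second ingredient is the general inclusion $T^\perp \subseteq \ker B_F(x,T)$, valid for any integrand~$F$: plugging $L = v \otimes w$ with $w \in T^\perp$ into the formula of Remark~\ref{rem:fst-ani-var} kills both summands in $B_F(x,T) \bullet L$ (because $\project{T}w = 0$, hence $\project{T} \bullet L = 0$ and $L \circ \project{T} = 0$), so $\langle v, B_F(x,T)w\rangle = 0$ for every $v \in \R^\adim$, giving $B_F(x,T)w = 0$.

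To prove $\FAC_x \subseteq \FBC_x$: assume $\delta_{F^x}W = 0$. The first step gives $T^\perp \subseteq \ker A_x(\mu)$, so $\dim \ker A_x(\mu) \ge \adim-\vdim$; by~\ref{i:ac:a} this is an equality, and then~\ref{i:ac:b} yields $\mu = \Dirac{T_0}$ for some $T_0 \in \grass{\adim}{\vdim}$. Applying~\ref{i:ac:a} to $\Dirac{T_0}$ and combining with $T_0^\perp \subseteq \ker B_F(x,T_0)$ from the second step gives $\ker B_F(x,T_0) = T_0^\perp$, so $T^\perp \subseteq T_0^\perp$ and therefore $T = T_0$. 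For $\FBC_x \subseteq \FAC_x$: let $\mu$ be arbitrary and set $K = \ker A_x(\mu)$. If $\dim K > \adim - \vdim$ then $\dim K^\perp < \vdim$ and at least two distinct $T, T' \in \grass{\adim}{\vdim}$ both contain $K^\perp$; for each such~$T$ we have $T^\perp \subseteq K$, so $\delta_{F^x}W = 0$ and $\FBC_x$ forces $\mu = \Dirac{T} = \Dirac{T'}$, a contradiction, which proves~\ref{i:ac:a}. If $\dim K = \adim - \vdim$, set $T = K^\perp$: then $T^\perp = K \subseteq \ker A_x(\mu)$, so $\delta_{F^x}W = 0$ and $\FBC_x$ gives $\mu = \Dirac{T}$, proving~\ref{i:ac:b}.

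The main obstacle is the integration-by-parts computation in the first paragraph: once $\delta_{F^x}W = 0$ has been translated into the algebraic statement $T^\perp \subseteq \ker A_x(\mu)$, both inclusions reduce to linear algebra on the Grassmannian. The degenerate case $\vdim = \adim$ needs a separate remark: here $\grass{\adim}{\vdim}$ is a singleton, so the formula of Remark~\ref{rem:fst-ani-var} forces $B_F(x,\R^\adim) = F(x,\R^\adim)\,\id{\R^\adim}$, and both conditions become trivially satisfied.
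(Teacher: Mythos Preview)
Your proof is correct. The core idea---reducing $\delta_{F^x}W=0$ to the algebraic condition $T^\perp\subseteq\ker A_x(\mu)$---is the same as the paper's, but you organise the argument differently and more symmetrically.

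For $\FAC_x\subseteq\FBC_x$ your argument matches the paper's Step~1 essentially line by line: both compute $\delta_{F^x}W(g)=\int_T A_x(\mu)\bullet\uD g\,d\HM^\vdim$, test to extract $T^\perp\subseteq\ker A_x(\mu)$, and then use~\ref{i:ac:a},~\ref{i:ac:b} together with $S^\perp\subseteq\ker B_F(x,S)$ to force $\mu=\Dirac T$.

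For $\FBC_x\subseteq\FAC_x$ the routes diverge. The paper sets $T=\im A_x(\mu)^*$, so that $T^\perp=\ker A_x(\mu)$ with $k=\dim T$ possibly less than~$\vdim$, and then invokes the auxiliary Lemma~\ref{lem:BC:k-ge-d} (which itself builds a $\vdim$-dimensional varifold by taking a product with an arbitrary complementary subspace) to get $k\ge\vdim$. You instead use the clean biconditional $\delta_{F^x}W=0\iff T^\perp\subseteq\ker A_x(\mu)$ directly: if $\dim\ker A_x(\mu)>\adim-\vdim$ you pick two distinct $\vdim$-planes $T\ne T'$ both containing $(\ker A_x(\mu))^\perp$, obtain $\mu=\Dirac T=\Dirac{T'}$ from $\FBC_x$, and reach a contradiction. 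This is more elementary---it avoids the detour through lower-dimensional varifolds---at the cost of not isolating the structural statement of Lemma~\ref{lem:BC:k-ge-d} as a separate result. In effect your two-plane argument is the content of that lemma's proof, inlined and specialised.

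One minor point worth making explicit: after you test with $g=\chi w$ you use that every $g\in\VF(\R^\adim)$ is a finite sum $\sum_i g_i e_i$ of such products, so vanishing on products suffices for the full $\delta_{F^x}W=0$. This is immediate but could be stated, since you phrase the conclusion as a genuine ``if and only if''.
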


\begin{proof}
    {\emph{Step 1}} We first prove that $\FAC_x \subseteq \FBC_x$. Let $F \in
    \FAC_x$, $\mu$ be a~Radon probability measure over~$\grass{\adim}{\vdim}$,
    and $T \in \grass{\adim}{\vdim}$. We define the varifold
    \begin{displaymath}
        W = (\HM^{\vdim} \restrict T) \times \mu \in \Var{\vdim}(\R^{\adim}) \,.
    \end{displaymath}
    Assume that $ \delta_{F^x} W = 0$. We will show that $\mu=\Dirac{T}$, i.e.,
    that $F \in \FBC_x$. By the very definition of anisotropic first variation,
    we deduce that for every test vector field $g \in \VF(\R^{\adim})$
    \begin{multline}
        \label{eq:deltaWg}
        0 = \delta_{F^x} W (g) 
        = \int B_F(x,S) \bullet \uD g(y) \ud W(y,S)
        \\
        = \int \int B_F(x,S) \bullet \uD g(y) \ud (\HM^{\vdim} \restrict T)(y) \ud \mu(S) 
        = \int A_x(\mu) \bullet \uD g(y) \ud (\HM^{\vdim} \restrict T)(y) \,.
    \end{multline}
    Let $e_1,\ldots,e_{\adim-\vdim}$ be an orthonormal basis of $T^{\perp}$. For
    any $\varphi \in \dspace{T}{\R}$, $i,j \in \{1,2,\ldots,\adim-\vdim\}$, we
    can find $g \in \VF(\R^{\adim})$ such that
    \begin{displaymath}
        g(y) = \varphi(\project T y) (y \bullet e_i) e_j
        \quad \text{whenever $y \in (T + \cball 01)$} \,;
    \end{displaymath}
    hence, equation~\eqref{eq:deltaWg} yields
    \begin{equation*}
        \int \varphi(y) A_x(\mu)e_i \bullet e_j \ud (\HM^{\vdim} \restrict T)(y) = 0 \qquad
        \text{for all $\varphi \in \dspace{T}{\R}$ and $i,j \in \{1,2,\ldots,\adim-\vdim\}$} \,,
    \end{equation*}
    which shows that $ T^{\perp} \subseteq \ker A_x(\mu)$. Since $\dim
    T^{\perp}=\adim-\vdim$, we get $\dim \ker A_x(\mu) \geq \adim-\vdim$.
    By~Definition \ref{def:AC}\ref{i:ac:a} we obtain $\adim-\vdim \le \dim \ker
    A_x(\mu) \le \adim - \vdim$, so it follows from
    Definition~\ref{def:AC}\ref{i:ac:b} that $\mu = \Dirac S$ for some $S \in
    \grass{\adim}{\vdim}$. Then
    \begin{displaymath}
        A_x(\mu) = B_F(x,S) \,.
    \end{displaymath}
    Directly from the definition of $B_F(x,S)$ it follows that $S^{\perp}
    \subseteq \ker B_F(x,S)$. Therefore, since $\dim \ker B_F(x,S) = \adim -
    \vdim$ and $T^{\perp} \subseteq \ker B_F(x,S) = \ker A_x(\mu)$, we see that
    $S = T$, which shows that $F \in \FBC_x$.

    {\emph{Step 2}} We prove now that $\FBC_x \subseteq \FAC_x$. Assume $F \in
    \FBC_x$. Given a~Radon probability measure~$\mu$ over~$\grass{\adim}{d}$,
    we~define
    \begin{displaymath}
        T = \im(A_x(\mu)^*) \,,
        \quad
        k = \dim T \,,
        \quad
        V = (\HM^{k} \restrict T) \times \mu \in \Var{\vdim}(\R^{\adim}) \,.
    \end{displaymath}
    Note that $T^{\perp} = [\im(A_x(\mu)^*)]^\perp = \ker A_x(\mu)$. Thus,
    similarly as in~\eqref{eq:deltaWg}, we get that for every $g \in
    \VF(\R^{\adim})$
    \begin{equation*}
        \delta_{F^x} V(g) 
        = A_x(\mu) \bullet \int \uD (g \circ \project T)(y) \ud (\HM^{k} \restrict T)(y)
        \\
        + \int A_x(\mu) \bullet \bigl( \uD g(y) \circ \perpproject T \bigr) \ud (\HM^{k} \restrict T)(y)
        = 0 \,.
    \end{equation*}
    By Lemma~\ref{lem:BC:k-ge-d}, we obtain $\dim T = k \geq \vdim$ and conclude
    that
    \begin{displaymath}
        \dim \ker A_x(\mu) = n - \dim T \leq \adim - \vdim \,,
    \end{displaymath}
    which is Definition \ref{def:AC}\ref{i:ac:a}. Moreover, if $\dim \ker
    A_x(\mu) = \adim - \vdim$, then $\dim T = \vdim$ and we can apply
    Definition~\ref{def:BC} to the varifold~$V$ and deduce that $\mu =
    \Dirac{T}$, which is precisely Definition~\ref{def:AC}\ref{i:ac:b}.
\end{proof}

\section{The inclusion $\FwBC \subseteq \FAE(\mathcal P)$}
\label{sec:wbc-in-ae}

In this section we work with cubical test pairs $(S,Q)$, where $Q$ is now
a~$\vdim$-dimensional cube; see Definition~\ref{def:cube-test-pair}. Cubical
test pairs give rise to the same classes of Almgren elliptic integrands as the
test pairs defined in Definition~\ref{def:test-pair}; see
Remark~\ref{rem:ellipticcube}.

The main result is Theorem~\ref{thm:wBC-in-AE}, which shows that $\FwBC_x \subseteq
\FAE_x(\mathcal P)$ given $\mathcal P$ is closed under Lipschitz deformations
leaving the boundary fixed and under gluing together several rescaled copies of
an element of~$\mathcal{P}$; see Definition~\ref{def:good-test-pairs}.

The second closedness property for $\mathcal P$ is needed to be able to perform
a~``homogenization'' (one could also call it a~``blow-down'') argument. More
precisely, given a~minimiser~$P$ of~$\Phi_{F^x}$ in~$\{ R : (R,Q) \in \mathcal P
\}$ we construct the varifold~$W$, occurring in Definition~\ref{def:BC}, so that
$W \restrict Q \times \grass{\adim}{\vdim}$ is a~limit of a~sequence of
varifolds $\tilde W_N = \var{\vdim}(P_N)$, where $P_N$ is constructed, for $N
\in \nat$, by gluing together $2^{N\vdim}$ rescaled copies of~$P$. A crucial
observation is that $P_N$ has the same $\Phi_{F^x}$~energy as~$P$ which, in
turn, is a~minimiser of~$\Phi_{F^x}$ in~$\mathcal P$. This allows us to deduce
that $\delta_{F^x} W_N = 0$ using Lemma~\ref{lem:fst-var-mini}, provided $P_N$
is a~competitor (or a limit of competitors), i.e., if $(P_N,Q) \in \mathcal P$
for an appropriate choice of the cube~$Q$.

It is not at all obvious that Theorem~\ref{thm:wBC-in-AE} is valid with
$\mathcal P$ being the set of all cubical test pairs; see
Remark~\ref{rem:ctp-good-p}.  The~proof that such family~$\mathcal P$ has the
necessary closedness property requires some subtle topological arguments and is
postponed to~Section~\ref{sec:all-test-pairs-good}; see~\ref{thm:all-tp-good}.
\begin{definition}
    \label{def:cube-test-pair}
    Let $Q_0 = [-1,1]^{\vdim} \subseteq \R^{\vdim}$. We say that $(S,Q)$ is a
    \emph{cubical test pair} if there exists $p \in \orthproj{\adim}{\vdim}$
    such that
    \begin{gather}
        Q = p^* \lIm Q_0 \rIm \,,
        \quad
        B = p^* \lIm \Bdry{Q_0} \rIm \,,
        \quad
        S \subseteq \R^\adim \text{ is compact and $(\HM^d,d)$~rectifiable} \,,
        \\
        \label{eq:cube-non-retr}
        \text{$f \lIm S \rIm \ne B$
          for all $f : \R^\adim \to \R^\adim$
          satisfying $\Lip f < \infty$
          and $f(x) = x$ for $x \in B$} \,.
    \end{gather}
\end{definition}
\begin{remark}
    \label{rem:ellipticcube}
    In the rest of the paper we will work for simplicity on cubical
      test pairs, but it's worth to remark that the two notions are perfectly
      equivalent for our purposes. Indeed, if we denote with $\mathcal P_1$ the
      set of rectifiable test pairs and with $\mathcal P_2$ the set of cubical
      test pairs, then we easily verify that for every $F$ being
      a~$\cnt^0$~integrand and $x \in \R^\adim $, it holds $\FAE_x(\mathcal P_1) =
      \FAE_x(\mathcal P_2)$ and $\FAUE_x(\mathcal P_1) = \FAUE_x(\mathcal P_2)$.
      To show this, we denote $\rho = \sqrt d$ and $Q_0 = [-1,1]^d$.
    
    Given $(S,Q) \in \mathcal P_2$, we find $p \in \orthproj{\adim}{\vdim}$ such
    that $Q = p^*\lIm Q_0\rIm$ and construct $(R,D) \in \mathcal P_1$ by setting
    \begin{displaymath}
        T = \im p^* \,,
        \quad
        D = T \cap \cball 01 \,,
        \quad
        \bar D = \scale{\rho}\lIm D \rIm \,,
        \quad
        \bar R = S \cup (\bar D \without Q) \,, 
        \quad
        R = \scale{1/\rho}\lIm \bar R \rIm 
        \,.
    \end{displaymath}
    Then
    \begin{displaymath}
        \rho^{d} \bigl( \Phi_{F^x}(R) - \Phi_{F^x}(D) \bigr)
        =  \Phi_{F^x}(\bar R) - \Phi_{F^x}(\bar D)
        = \Phi_{F^x}(S) - \Phi_{F^x}(Q) \,.
    \end{displaymath}
    Given $(R,D) \in \mathcal P_1$ we choose $p \in \orthproj{\adim}{\vdim}$
    such that $D \subseteq \im p^*$ and construct $(S,Q) \in \mathcal P_2$ by
    setting
    \begin{displaymath}
        Q = p^* \lIm Q_0 \rIm \,,
        \quad 
        \bar Q = \scale{\rho}\lIm \bar Q \rIm \,,
        \quad
        \bar S = R \cup (\bar Q  \without D) \,,
        \quad
        S = \scale{1/\rho}\lIm \bar S \rIm \,.
    \end{displaymath}
    Then
    \begin{displaymath}
        \rho^{d} \bigl( \Phi_{F^x}(S) - \Phi_{F^x}(Q) \bigr)
        =  \Phi_{F^x}(\bar S) - \Phi_{F^x}(\bar Q)
        = \Phi_{F^x}(R) - \Phi_{F^x}(D) \,.
    \end{displaymath}
    Therefore, $\FAE_x(\mathcal P_1) = \FAE_x(\mathcal P_2)$ and
    $\FAUE_x(\mathcal P_1) = \FAUE_x(\mathcal P_2)$.
\end{remark}

\begin{definition}
    \label{def:multiplication}
    Let $Q$ be a~$\vdim$-dimensional cube in~$\R^{\adim}$ (see
    Definition~\ref{def:cube}), and $X \subseteq \R^{\adim}$. We say
    that~$(Y,Q)$ is a~\emph{multiplication} of~$(X,Q)$ if there exist $k \in
    \natp$ and a~finite set $\mathcal A$ of $\vdim$-dimensional cubes
    in~$\R^{\adim}$ of side-length $\side{Q}/k$ such that
    \begin{gather}
        Q = \tbcup \mathcal A \,,
        \quad
         \cInt K \cap \cInt L = \varnothing \, \, \forall K\neq L \in \mathcal A  \,,
        \\
        Y = \tbcup \bigl\{ \trans{\centre K} \circ \scale{1/k} \circ \trans{-\centre Q} \lIm X \rIm : K \in \mathcal A \bigr\}
        \,.
    \end{gather}
\end{definition}

\begin{remark}
    \label{rem:multiplication}
    Observe that a multiplication $(Y,Q)$ of $(X,Q)$ is uniquely determined by
    the parameter $k$ occurring in Definition~\ref{def:multiplication}. Thus, we
    may define the \emph{$k$-mul\-ti\-pli\-cation of~$(X,Q)$} to be exactly~$(Y,Q)$.
\end{remark}

\begin{definition}
    \label{def:good-test-pairs}
    We say that a set $\mathcal Q$ of pairs of subsets of~$\R^{\adim}$ is
    a~\emph{good family} if
    \begin{enumerate}
    \item all elements of $\mathcal Q$ are cubical test pairs;
    \item if $(X,Q) \in \mathcal Q$, $N \in \nat$, and $(Y,Q)$ is the
        $2^N$-multiplication of~$(X,Q)$, then $(Y,Q) \in \mathcal Q$;
    \item if $(X,Q) \in \mathcal Q$, $f : \R^{\adim} \to \R^{\adim}$ is
        Lipschitz, and $f(x) = x$ for $x \in \cBdry{Q}$, then $(f\lIm X \rIm,Q)
        \in \mathcal Q$.
    \end{enumerate}
\end{definition}

\begin{remark}
    \label{rem:ctp-good-p}
    It is plausible that the set of all cubical test pairs is a good family and,
    indeed, in Section~\ref{sec:all-test-pairs-good} we prove it is. However,
    this is not at all obvious.

    Consider the Adams' surface; see~\cite[Example~8 on p.~81]{Rei60}.
    The M{\"o}bius strip~$M$ and the triple M{\"o}bius strip~$T$ are both
    homotopy equivalent to the~$1$-dimensional sphere and both can be
    continuously embedded in some~$\R^{\adim}$ so that $(M,Q)$ and $(T,Q)$
    become cubical test pairs, where $Q = [0,1]^{2} \times \{0\}^{\adim-2}$.
    However, if one puts~$M$ and~$T$ side by side touching only along one
    $1$-dimensional face of~$Q$, then one obtains the Adams' surface~$A$, which
    retracts onto its boundary. This, as explained in~\cite[Example~8 on
    p.~81]{Rei60}, is a~consequence of the fact that the inclusion of the
    boundary of~$M$ into~$M$ has degree~$2$, the inclusion of the boundary
    of~$T$ into~$T$ has degree~$3$, these numbers are relatively prime, and~$A$
    is homotopy equivalent to the wedge sum (a.k.a. ``bouquet'';
    see~\ref{def:wedge-sum}) of two circles so, defining $f : A \to \sphere{1}$
    to be of degree $-1$ on~$M$ and of degree~$1$ on~$T$, we get a~map such that
    $f \circ j$ is of degree~one, where $j : \sphere{1} \to A$ is
    a~parameterization of the boundary of~$A$. One can then
    construct a Lipschitz retraction of $A$ onto its boundary;
    see~\ref{lem:thick-retr}. Luckily for us, the situation is different if one
    puts together many copies of \emph{the~same} set~$X$. We prove
    in~\ref{cor:deg-gcd} that if $(X,Q)$ is a cubical test pair, then one
    cannot have two maps $f,g : X \to \cBdry{Q}$ such
    that~$\deg(f|_{\cBdry{Q}})$ and~$\deg(g|_{\cBdry{Q}})$ are relatively prime.
\end{remark}

Before stating and proving the main theorem of this section, we need
the following lemma, which, roughly speaking, will be used as an
{\emph{almost}} uniqueness result for minimizers of the area functional in the
class of cubical test pairs:
\begin{lemma}\label{lem:uniqueness}
    Given a cubical test pair $(R,Q)$ as in Definition \ref{def:cube-test-pair}
    and $x \in \R^\adim$. If
    \begin{equation}\label{uniqueness0}
        \Phi_{F^x}(R) < \Phi_{F^x}(Q) \,,
    \end{equation}
    then 
    \begin{equation}\label{uniqueness}
        \HM^{\vdim}(R) > \HM^{\vdim}(Q) \,.
    \end{equation}
\end{lemma}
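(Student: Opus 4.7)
Let $T = \im p^* \in \grass \adim \vdim$ be the $\vdim$-plane containing $Q$. Since the approximate tangent plane to $Q$ equals $T$ at every interior point of $Q$, one has $\Phi_{F^x}(Q) = F(x, T) \HM^\vdim(Q)$, and the orthogonal projection $\project T$ is $1$-Lipschitz. The strategy is to use the cubical test pair property to force $Q \subseteq \project T \lIm R \rIm$, then to combine this with the area formula to deduce that under the assumption $\HM^\vdim(R) \leq \HM^\vdim(Q)$ the approximate tangent plane $T_y R$ must equal $T$ $\HM^\vdim$-a.e.~on~$R$, which collapses $\Phi_{F^x}(R)$ to $\Phi_{F^x}(Q)$ and contradicts~\eqref{uniqueness0}.

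My~first step is to show $Q \subseteq \project T \lIm R \rIm$. Suppose otherwise; since $\project T \lIm R \rIm$ is compact, I~can pick $q \in \cInt Q \without \project T \lIm R \rIm$ and consider $h := \pi_{Q, q} \circ \project T$, where $\pi_{Q, q}$ is the central projection from~$q$ onto $\cBdry Q$. The map~$h$ is Lipschitz on every set whose $\project T$-image is bounded away from~$q$, satisfies $h|_B = \id{B}$ (since $B = \cBdry Q \subseteq T \without \cInt Q$, where $\pi_{Q, q}$ is the identity), and sends~$R$ into~$B$. After replacing~$R$ by $R \cup B$---which affects neither~$\Phi_{F^x}(R)$ nor $\HM^\vdim(R)$ since $\HM^\vdim(B) = 0$, and which we may assume keeps the cubical test pair structure intact---one has $B \subseteq R$, so $h \lIm R \rIm \supseteq h \lIm B \rIm = B$, giving $h \lIm R \rIm = B$. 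Extending $h|_R$ via Kirszbraun's theorem to a Lipschitz $f : \R^\adim \to \R^\adim$ with $f|_B = \id{B}$ and $f \lIm R \rIm = B$ then contradicts~\eqref{eq:cube-non-retr}. Hence $Q \subseteq \project T \lIm R \rIm$, and the $1$-Lipschitzness of~$\project T$ yields $\HM^\vdim(R) \geq \HM^\vdim(\project T \lIm R \rIm) \geq \HM^\vdim(Q)$.

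To conclude, I~argue by contradiction assuming $\HM^\vdim(R) \leq \HM^\vdim(Q)$; combined with the previous step this forces $\HM^\vdim(R) = \HM^\vdim(\project T \lIm R \rIm) = \HM^\vdim(Q)$. Applying the area formula~\cite[3.2.22]{Fed69} to the $1$-Lipschitz map $\project T$ on the $(\HM^\vdim, \vdim)$-rectifiable set~$R$, with tangential Jacobian $J(y)$ (satisfying $J \leq 1$ with equality if and only if $T_y R = T$) and multiplicity $N(y) = \HM^0(R \cap \project T^{-1}\{y\}) \geq 1$ on $\project T \lIm R \rIm$, I~obtain
\[
    \HM^\vdim(Q) \leq \HM^\vdim(\project T \lIm R \rIm) \leq \int_{\project T \lIm R \rIm} N \ud \HM^\vdim = \int_R J \ud \HM^\vdim \leq \HM^\vdim(R) = \HM^\vdim(Q).
\]
All inequalities are therefore equalities, forcing $J = 1$ for $\HM^\vdim$-a.e.~$y \in R$, i.e.~$T_y R = T$ for $\HM^\vdim$-a.e.~$y \in R$. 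Consequently $\Phi_{F^x}(R) = \int_R F(x, T_y R) \ud \HM^\vdim = F(x, T) \HM^\vdim(R) = F(x, T) \HM^\vdim(Q) = \Phi_{F^x}(Q)$, contradicting~\eqref{uniqueness0}. The main obstacle is the topological step $Q \subseteq \project T \lIm R \rIm$, where the non-retraction property of the cubical test pair is essentially used and where one must handle the mild technicality that $B$ need not a~priori lie inside~$R$; the rest is a calibration-style argument exploiting the flatness of~$Q$.
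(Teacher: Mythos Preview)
Your argument follows the same strategy as the paper's proof: use the non-retraction property to force $\project T\lIm R\rIm$ to cover~$Q$, then squeeze via the area formula to get $T_yR=T$ for $\HM^\vdim$-a.e.\ $y\in R$, and conclude $\Phi_{F^x}(R)\ge\Phi_{F^x}(Q)$. The paper restricts first to $R\cap(Q\times\R^{\adim-\vdim})$ whereas you work with all of $R$; both variants lead to the same chain of equalities.

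There is one genuine oversight. You assert that $h=\pi_{Q,q}\circ\project T$ ``sends $R$ into $B$'', but this only holds if $\project T\lIm R\rIm\subseteq Q$, which you have no reason to assume: $R$ is an arbitrary compact set and its orthogonal projection onto~$T$ may spill outside~$Q$. For any $y\in R$ with $\project T y\in T\setminus Q$ the central projection $\pi_{Q,q}$ fixes~$\project T y$, so $h(y)=\project T y\notin B$ and hence $h\lIm R\rIm\not\subseteq B$; you then cannot conclude $f\lIm R\rIm=B$. The fix is immediate: insert the $1$-Lipschitz nearest-point retraction $r_Q:T\to Q$ and use $\tilde h=\pi_{Q,q}\circ r_Q\circ\project T$. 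Since $q\in\cInt Q$ one has $r_Q^{-1}\{q\}=\{q\}$, so $q\notin r_Q\lIm\project T\lIm R\rIm\rIm$ and $\tilde h$ is Lipschitz on~$R$ with $\tilde h\lIm R\rIm\subseteq B$. This is essentially what the paper achieves by restricting to the cylinder $Q\times\R^{\adim-\vdim}$ before projecting.

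A smaller point: your claim that passing from $R$ to $R\cup B$ ``keeps the cubical test pair structure intact'' is not correct as stated --- if $R$ is a single point then $(R,Q)$ satisfies~\eqref{eq:cube-non-retr} vacuously, while $(R\cup B,Q)$ does not. In practice one works under the tacit convention $B\subseteq R$, which the paper's proof also uses implicitly when it asserts the existence of the retraction~$f$; under that convention your argument (with the fix above) is correct and matches the paper's.
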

\begin{proof}
    Assume by contradiction that \eqref{uniqueness} does not hold. Thus in particular
    \begin{equation}\label{uniqueness1}
        \HM^{\vdim}(R \cap (Q \times \R^{n-d})) \leq  \HM^{\vdim}(R) \leq \HM^{\vdim}(Q) \,.
    \end{equation}
    Denoting with $T$ the $d$-plane containing $Q$, we observe that 
    \begin{equation}\label{uniqueness2}
        \HM^{\vdim}(R \cap (Q \times \R^{n-d})) \geq \HM^{\vdim}(\project T(R \cap (Q \times \R^{n-d}))) \geq \HM^{\vdim}(Q) \,,
    \end{equation}
    otherwise there would exist a $d$-dimensional open ball $B\subset Q$ such that
    \begin{equation}\label{uniqueness3}
        (B\times \R^{n-d}) \cap R = \emptyset \,.
    \end{equation}
    Since $R$ is compact, then \eqref{uniqueness3} would imply the existence of
    $f : \R^\adim \to \R^\adim$ satisfying $\Lip f < \infty$ and $f(x) = x$ for
    $x \in \partial_c Q$, such that $f \lIm R \rIm = \partial_c Q$, which would
    contradict the property of $(R,Q)$ being a cubical test pair.  By
    \eqref{uniqueness2} and the area formula (a.f.)~\cite[3.2.20]{Fed69},
    we~compute
    \begin{multline}
        \label{uniqueness4}
        \HM^{\vdim}(Q)
        \overset{\eqref{uniqueness2}}{\leq} \HM^{\vdim}(\project T(R \cap (Q \times \R^{n-d})))
        \leq \int_{Q}\HM^{0}(\project T^{-1}(y)\cap R )\ud \HM^{\vdim}(y)\\
        \overset{(a.f.)}{=} \int_{R \cap (Q \times \R^{n-d})}\ap J_{\vdim} \project T(y) \ud \HM^{\vdim}(y)
        \leq \HM^{\vdim}(R \cap (Q \times \R^{n-d}))
        \overset{\eqref{uniqueness1}}{\leq} \HM^{\vdim}(Q) \,.
    \end{multline}
    Then the inequalities in \eqref{uniqueness4} are all equality, which implies
    that $\ap J_{\vdim} \project T(y)= 1$ for $\HM^{\vdim}$-a.e. $y \in R \cap (Q \times
    \R^{n-d})$. Hence,
    \begin{equation}\label{uniqueness5}
        \Tan^{\vdim}(\HM^{\vdim} \restrict R, y)=T,
        \qquad \mbox{for $\HM^{\vdim}$-a.e. $y \in  R \cap (Q \times \R^{n-d})$} \,.
    \end{equation}
    We can then compute the following chain of inequalities, which provides a contradiction
    \begin{align*}
      \Phi_{F^x}(Q) &= \int_{Q}F^x(T)\ud \HM^{\vdim}(y)
                      \overset{\eqref{uniqueness2}}{\leq} \int_{R \cap (Q \times \R^{n-d})}F^x(T)\ud \HM^{\vdim}(y)
      \\
                    &\overset{\eqref{uniqueness5}}{\leq} \Phi_{F^x}(R \cap (Q \times \R^{n-d}))
                      \leq \Phi_{F^x}(R) \overset{\eqref{uniqueness0}}{<} \Phi_{F^x}(Q) \,.
                      \qedhere
    \end{align*}
\end{proof}
We can finally prove the following:
\begin{theorem}
    \label{thm:wBC-in-AE}
    Assume $x \in \R^{\adim}$ and $\mathcal P$ is a~good family
    (cf. Definition~\ref{def:good-test-pairs}). Then $\FwBC_x \subseteq
    \FAE_x(\mathcal P)$.
\end{theorem}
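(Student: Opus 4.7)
\emph{Plan.} Suppose, for a contradiction, that $F \in \FwBC_x$ but $F \notin \FAE_x(\mathcal P)$. Then there is a cubical test pair $(S,Q) \in \mathcal P$ with $\HM^{\vdim}(S) > \HM^{\vdim}(Q)$, and by rectifiability of~$S$ and~$Q$ we have $\Phi_{F^x}(S) \le \Phi_{F^x}(Q)$. Let $T \in \grass{\adim}{\vdim}$ be the plane containing~$Q$. The plan is: produce a minimiser~$P$ of~$\Phi_{F^x}$ in a~good class containing~$S$; feed its $2^N$-multiplications into a~homogenisation argument whose blow-down limit is a~varifold of the form $W = (\HM^{\vdim}\restrict Q) \times \nu$ with $\delta_{F^x}W = 0$; then apply the $\FwBC_x$-hypothesis (which through Definition~\ref{def:BC} forces the tangent distribution to be a~Dirac mass at~$T$) to deduce that~$P$, and then also~$S$, has tangent plane~$T$ at $\HM^{\vdim}$-almost every point. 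Combining this with Lemma~\ref{lem:uniqueness} applied to the cubical test pair~$(P,Q)$ produces the contradiction with $\HM^{\vdim}(S) > \HM^{\vdim}(Q)$.

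\emph{Minimiser and homogenisation.} Choose $U \subseteq \R^{\adim}$ open with $U \cap \cBdry Q = \varnothing$ and $S \cup \cInt Q \subseteq U$, and let $\mathcal C$ be the good class (Definition~\ref{def:goodclass}) generated by~$S$ under~$\adm{U}$; since elements of $\adm{U}$ fix~$\cBdry Q$ and~$\mathcal P$ is closed under such Lipschitz maps, every element of~$\mathcal C$ lies in $\{R : (R,Q) \in \mathcal P\}$. Theorem~\ref{thm:FK} produces a~minimising sequence $\{S_i \in \mathcal C : i \in \nat\}$ and a~rectifiable closed set~$P$ with $V := \var{\vdim}(P) = \lim_i \var{\vdim}(S_i \cap U)$ and $\Phi_{F^x}(P) = \mu := \inf_{\mathcal C}\Phi_{F^x} \le \Phi_{F^x}(S)$; Theorem~\ref{thm:wbc-exist} gives that~$P$ has density one with approximate tangent planes $\HM^{\vdim}$-a.e., and Lemma~\ref{lem:fst-var-mini} yields $\delta_{F^x}V = 0$. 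A~Hausdorff-limit argument analogous to Lemma~\ref{lem:ctp-HD-limit} (or via the equivalence in Remark~\ref{rem:ellipticcube}) shows $(P,Q) \in \mathcal P$. For each $N \in \nat$ let $P_N$ be the $2^N$-multiplication of~$P$; by the multiplication-closure of~$\mathcal P$, $(P_N,Q) \in \mathcal P$, and the scaling identity $\Phi_{F^x}(P_N) = 2^{N\vdim}\cdot 2^{-N\vdim}\Phi_{F^x}(P) = \mu$ makes each~$P_N$ a minimiser in its own good class, so Lemma~\ref{lem:fst-var-mini} gives $\delta_{F^x}\var{\vdim}(P_N) = 0$.

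\emph{The blow-down limit.} As $N \to \infty$, the cube centres $\centre K$ equidistribute in~$Q$ and each rescaled copy of~$P$ concentrates at its centre, so the varifolds $W_N = \var{\vdim}(P_N) \restrict (Q \times \grass{\adim}{\vdim})$ converge weakly to
\[
    W = (\HM^{\vdim} \restrict Q) \times \nu,
    \qquad
    \nu(A) = \tfrac{1}{\HM^{\vdim}(Q)} \int_P \CF_A\bigl(\Tan^{\vdim}(\HM^{\vdim}\restrict P, y)\bigr) \ud \HM^{\vdim}(y).
\]
Weak continuity of the anisotropic first variation against test fields with compact support in~$\cInt Q$ gives $\delta_{F^x}W = 0$; by translation invariance of~$F^x$ (Remark~\ref{rem:fst-ani-var}) this is equivalent to $\delta_{F^x}\bigl((\HM^{\vdim}\restrict T) \times \mu\bigr) = 0$, where $\mu = \nu/\nu(\grass{\adim}{\vdim})$ is a probability measure. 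The hypothesis $F \in \FwBC_x$ combined with Definition~\ref{def:BC} forces $\mu = \Dirac T$, so $\Tan^{\vdim}(\HM^{\vdim}\restrict P, y) = T$ for $\HM^{\vdim}$-a.e.~$y \in P$.

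\emph{Closing the argument and main obstacle.} With the tangent planes of~$P$ constantly~$T$, one has $\Phi_{F^x}(P) = F^x(T)\HM^{\vdim}(P)$ and $\Phi_{F^x}(Q) = F^x(T)\HM^{\vdim}(Q)$, so the minimisation inequality $\Phi_{F^x}(P) \le \Phi_{F^x}(Q)$ yields $\HM^{\vdim}(P) \le \HM^{\vdim}(Q)$; the contrapositive of Lemma~\ref{lem:uniqueness} applied to~$(P,Q)$ provides the reverse energy inequality, forcing $\Phi_{F^x}(P) = \Phi_{F^x}(Q) = \mu$. Hence $\Phi_{F^x}(S) = \mu$ as well, and~$S$ is itself a~minimiser; repeating the homogenisation argument with~$S$ in place of~$P$ gives $\Tan^{\vdim}(\HM^{\vdim}\restrict S, y) = T$ for $\HM^{\vdim}$-a.e.~$y \in S$, and then $F^x(T)\HM^{\vdim}(S) = \Phi_{F^x}(S) = \Phi_{F^x}(Q) = F^x(T)\HM^{\vdim}(Q)$ forces $\HM^{\vdim}(S) = \HM^{\vdim}(Q)$, contradicting $\HM^{\vdim}(S) > \HM^{\vdim}(Q)$. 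The most delicate technical step is the rigorous identification of the blow-down limit~$W$ and the passage $\delta_{F^x}W_N = 0 \Rightarrow \delta_{F^x}W = 0$: both rely on the equidistribution of the cube centres of the $2^N$-multiplication and on each~$P_N$ being a genuine competitor, which is exactly what the multiplication-closure built into Definition~\ref{def:good-test-pairs} is designed to deliver.
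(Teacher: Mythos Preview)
Your overall strategy---minimise, multiply, blow down, apply $\FwBC$---matches the paper's, and your explicit identification of the limiting Grassmannian factor~$\nu$ is correct. However, there is a genuine gap at the point where you pass from the minimiser~$P$ to its multiplications~$P_N$.

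You assert that ``a Hausdorff-limit argument analogous to Lemma~\ref{lem:ctp-HD-limit} shows $(P,Q) \in \mathcal P$'', and then use the multiplication-closure of~$\mathcal P$ to place $(P_N,Q)$ in~$\mathcal P$. This is not justified: Definition~\ref{def:good-test-pairs} closes~$\mathcal P$ only under $2^N$-multiplications and Lipschitz maps fixing~$\cBdry Q$, \emph{not} under Hausdorff limits. Lemma~\ref{lem:ctp-HD-limit} gives only that $(P,Q)$ is a cubical test pair, and the paper itself is careful to write ``(may be not in~$\mathcal P$)'' at exactly this point. Without $(P_N,Q) \in \mathcal P$, you cannot argue that~$P_N$ is a competitor in any class whose infimum is~$\mu$, so the step ``each~$P_N$ is a minimiser in its own good class, hence $\delta_{F^x}\var_{\vdim}(P_N)=0$'' breaks down. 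Note that knowing $\delta_{F^x}\var_{\vdim}(P)=0$ away from~$\cBdry Q$ is \emph{not} enough: scaling and translating gives vanishing first variation only away from the \emph{internal} boundaries of the small cubes, whereas you need it away from~$\cBdry Q$ alone.

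The paper repairs this by working from the start with the full class $\mathcal C=\{R:(R,Q)\in\mathcal P\}$ (which automatically contains every multiplication of every element) and then splitting into two cases. If the minimiser~$R$ satisfies $\Phi_{F^x}(R)=\Phi_{F^x}(Q)$, then $S$ itself is a minimiser and one sets $P=S\in\mathcal C$; its multiplications~$P_N$ lie in~$\mathcal C$ with energy~$\mu$, so Lemma~\ref{lem:fst-var-mini} applies directly. If instead $\Phi_{F^x}(R)<\Phi_{F^x}(Q)$, one sets $P=R$ (possibly not in~$\mathcal C$) but approximates each~$P_N$ by the multiplications~$S_{i,N}$ of the minimising sequence~$S_i$, which \emph{do} lie in~$\mathcal C$; the second clause of Lemma~\ref{lem:fst-var-mini} then yields $\delta_{F^x}\tilde W_N=0$. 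In either case Lemma~\ref{lem:uniqueness} is used once, up front, to secure $\HM^{\vdim}(P)>\HM^{\vdim}(Q)$, i.e.\ $\vartheta>1$, and the contradiction is then immediate: $\Phi_{F^x}(Q)<\vartheta\Phi_{F^x}(Q)=\Phi_{F^x}(\tilde W)=\Phi_{F^x}(P)\le\Phi_{F^x}(Q)$. Your detour---deducing that the tangents of~$P$ equal~$T$, then feeding this back to show~$S$ is a minimiser, then homogenising~$S$---is unnecessary once~$\vartheta>1$ is known, but more importantly it still rests on the unjustified $(P,Q)\in\mathcal P$.
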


\begin{proof}
    We proceed by contradiction. Assume $F \in \FwBC_x \without \FAE_x(\mathcal
    P)$. Then there exists $(S,Q) \in \mathcal P$ such that
    \begin{displaymath}
        \HM^{\vdim}(S) > \HM^{\vdim}(Q)
        \quad \text{and} \quad
        \Phi_{F^x}(S) \leq \Phi_{F^x}(Q) \,.
    \end{displaymath} 
    Define
    \begin{displaymath}
        B = \cBdry{Q}
        \quad \text{and} \quad
        \mathcal C = \bigl\{ S : (S,Q) \in \mathcal P \bigr\} \,.
    \end{displaymath}
    Note that $\mathcal C$ is a good class in $\R^{\adim} \without B$ in
    the sense of Definition~\ref{def:goodclass}.
    
    Next, we employ Theorem~\ref{thm:wbc-exist} with~$F^x$ in place of~$F$ together
    with Theorem~\ref{thm:FK}\ref{i:FK:min}\ref{i:FK:rect}\ref{i:FK:compact} to find
    a~compact $(\HM^{\vdim},\vdim)$~rectifiable set $R \subseteq \R^{\adim}$
    such that
    \begin{displaymath}
        \Phi_{F^x}(R) = \inf \bigl\{ \Phi_{F^x}(P) : P \in \mathcal C \bigr\} \le \Phi_{F^x}(S) \le \Phi_{F^x}(Q) \,.
    \end{displaymath}
    Proceeding as in Lemma~\ref{lem:ctp-HD-limit} we see that $(R,Q)$ is a
    cubical test pair (may be not in $\mathcal P$). In case $\Phi_{F^x}(R) < \Phi_{F^x}(Q)$,
    by Lemma~\ref{lem:uniqueness} we get $\HM^{\vdim}(R) > \HM^{\vdim}(Q)$,
    and we set $P = R$. Otherwise, we have $\Phi_{F^x}(R) = \Phi_{F^x}(Q) =
    \Phi_{F^x}(S)$ and we set $P = S$. In any case, setting $V = \var{\vdim}(P)
    \in \Var{\vdim}(\R^{\adim})$ and using Lemma~\ref{lem:fst-var-mini}, we
    obtain
    \begin{equation}
        \label{eq:fst-var-P}
        \infty > \HM^{\vdim}(P) > \HM^{\vdim}(Q)
        \quad \text{and} \quad
        \delta_{F^x} V(g) = 0
        \quad \text{for $g \in \VF(\R^{\adim} \without B)$} \,.
    \end{equation}
    Let $p \in \orthproj{\adim}{\vdim}$ and $T \in \grass{\adim}{\vdim}$ be such
    that $p^*[Q_0] = Q \subseteq T$, where $Q_0 = [-1,1]^{\vdim}$.  For each $N
    \in \nat$ we define $P_N$ and $\mathcal A_N$ so that $(P_N,Q)$ is the
    $2^N$-mul\-ti\-pli\-cation of~$(P,Q)$ and $\mathcal A_N$ is the corresponding set
    of $\vdim$-dimensional cubes covering~$Q$ as in Definition~\ref{def:multiplication}.
    We also set
    \begin{gather}
        W_N = \sum_{v \in \integers^{\vdim}} \var{\vdim}(\trans{p^*(2v)} \lIm P_N \rIm) \in \Var{\vdim}(\R^{\adim})
        \\
        \text{and} \quad
        R_K = \trans{\centre K} \circ \scale{2^{-N+1}} \lIm P \rIm 
        \quad \text{for $K \in \mathcal A_N$}
        \,.        
    \end{gather}
    Observe that for $N \in \nat$ and $\rho \in (0,\infty)$ there are at most
    $\unitmeasure{\vdim} \bigl( \rho + \diam P \bigr)^{\vdim}$ translated copies
    of~$P_N$ in $\spt \|W_N\| \cap \cball 0{\rho}$; therefore,
    \begin{equation*}
        \measureball{\| W_N \|}{\cball 0\rho}
        \le \unitmeasure{\vdim} \bigl( \rho + \diam P \bigr)^{\vdim} \HM^{\vdim}(P_N)
        = \unitmeasure{\vdim} \bigl( \rho + \diam P \bigr)^{\vdim} \HM^{\vdim}(P)  
        \quad \text{for $\rho \in (0,\infty)$} \,.
    \end{equation*}
    So $W_N$ is a Radon measure and there exists a~subsequence $\{ W_{N_i} : i \in \nat
    \}$ which converges to some varifold~$W$ in~$\Var{\vdim}(\R^{\adim})$.
    Moreover, we have
    \begin{displaymath}
        R_K \subseteq T + \cball 0{2^{-N}\diam P}
        \quad \text{for $K \in \mathcal A_N$} 
        \quad \text{so $\spt \|W\| \subseteq T$}\,.
    \end{displaymath}
    Directly from the construction and by density of base~$2$ rational numbers
    in~$\R$, it follows also that~$W$ is translation invariant in~$T$, i.e.,
    $(\trans{v})_{\#} W = W$ for all $v \in T$. Hence, there exists $\vartheta
    \in (0,\infty)$ and a Radon probability measure $\mu$ over
    $\grass{\adim}{\vdim}$ such that
    \begin{displaymath}
        W = \vartheta (\HM^{\vdim} \restrict T) \times \mu 
        \quad \text{and} \quad
        \vartheta = \frac{\HM^{\vdim}(P)}{\HM^{\vdim}(Q)} > 1 \,.
    \end{displaymath}
    We define
    \begin{gather}
        \tilde W_N = \var{\vdim}(P_N)  \in \Var{\vdim}(\R^\adim)
        \quad \text{for $N \in \nat$}
        \quad \text{and} \quad
        \tilde W = \lim_{i \to \infty} \tilde W_{N_i}
        = \vartheta (\HM^{\vdim} \restrict Q) \times \mu \,.
    \end{gather}
    We also record that
    \begin{displaymath}
        \HM^{\vdim}(P_N) = \HM^{\vdim}(P) 
        \quad \text{and} \quad
        \Phi_{F^x}(P_N) = \Phi_{F^x}(P) \quad \text{for $N \in \nat$}\,,
    \end{displaymath}
    and since the supports of $\| \tilde W_N \|$ for $N \in \nat$ all lie in
    a~fixed compact set (cf. Remark~\ref{rem:cpt-spt-vari}) we also have
    \begin{equation}
        \label{eq:W-lim-PN}
        \Phi_{F^x}(\tilde W) = \lim_{i \to \infty} \Phi_{F^x}(\tilde
        W_{N_i}) = \lim_{i \to \infty} \Phi_{F^x}(P_{N_i}) = \Phi_{F^x}(P) \,.
    \end{equation}
    We claim that
    \begin{equation}
        \label{claimone}
        \delta_{F^x} W = 0.
    \end{equation}

    First we observe that this would immediately give a contradiction and
    conclude the proof. Indeed, since $F \in \FwBC_x$, we deduce
    from~\eqref{claimone} and Definition~\ref{def:BC} that $\mu = \Dirac{T}$. This, in
    turn, yields the following contradiction
    \begin{displaymath}
        \Phi_{F^x}(Q) 
        < \vartheta\Phi_{F^x}(Q) 
        = \Phi_{F^x}(\tilde W)
        \operatorname*{=}^{\eqref{eq:W-lim-PN}} \Phi_{F^x}(P) \leq \Phi_{F^x}(Q) \,.
    \end{displaymath}

    We are just left to prove the claim \eqref{claimone}. To this end, since $W$
    is invariant under translations in~$T$, it suffices to show that
    \begin{equation}
        \label{1}
        \delta_{F^x} \tilde W_N(g) = 0 \quad \text{for $N \in \nat$ and $g \in \VF(\R^{\adim} \without B)$} \,.
    \end{equation}

    If $P = S \in \mathcal C$, since $\mathcal C$ is a good family, then $P_N \in \mathcal C$ and $\tilde W_N =
    \var{\vdim}(P_N)$ and
    \begin{displaymath}
        \|\tilde W_N\|(\R^{\adim}) = \HM^{\vdim}(P) = \inf \{ \Phi_{F^x}(K) : K \in \mathcal C \}
        \quad \text{for $N \in \nat$} \,;
    \end{displaymath}
    hence, applying Lemma~\ref{lem:fst-var-mini}, we see that $\delta_{F^x}
    \tilde W_N(g) = 0$ for $g \in \VF(\R^{\adim} \without B)$ and $N \in \nat$.

    In case $P = R$, we use Theorem~\ref{thm:FK} to find a minimising sequence $\{ S_i
    \in \mathcal C : i \in \nat \}$ such that $\var{\vdim}(P) = V = \lim_{i \to
      \infty} \var{\vdim}(S_i \cap \R^{\adim} \without B)$. Defining $S_{i,N}
    \in \mathcal C$ so that $(S_{i,N},Q)$ is the $2^N$-multiplication
    of~$(S_i,Q)$ we get $\tilde W_N = \lim_{i \to \infty} \var{\vdim}(S_{i,N})$.
    Recalling Theorem~\ref{thm:FK}\ref{i:FK:var-lim}\ref{i:FK:min}\ref{i:FK:unrect} we
    may once again apply Lemma~\ref{lem:fst-var-mini} to see that also in this case
    $\delta_{F^x} \tilde W_N(g) = 0$ for $g \in \VF(\R^{\adim} \without B)$ and
    $N \in \nat$ so the proof is done.
\end{proof}

\section{Cubical test pairs form a good family}
\label{sec:all-test-pairs-good} 
Here we prove that the family of all cubical test pairs is good in the sense
of~\ref{def:good-test-pairs}. To our surprise the proof had to employ a few
sophisticated (yet classical) tools of algebraic topology. Given a cubical test
pair $(X,Q)$ and its $2^N$-multiplication $(Y,Q)$ we need to show that $S =
\cBdry Q$ is not a~Lipschitz retract of~$Y$, which is the same as showing that
there is no continuous map $f : Y \to S$ with $\deg(f|_S) = 1$;
cf.~\ref{lem:thick-retr}. This becomes a~topological problem of independent
interest. We first sketch the idea of the proof, highlighting the main points of
the argument.

Let $(X,Q)$ be a~cubical test pair. To~be able to use tools of algebraic
topology we need to pass from an arbitrary compact set~$X$ satisfying $0 <
\HM^{\vdim}(X) < \infty$ to an open set~$U$ containing~$X$ and~having homotopy
type of a~$\vdim$-dimensional CW-complex. We~achieve this by applying the
deformation theorem~\ref{thm:mae:dt} to~$X$, obtaining an open set~$U \subseteq
\R^{\adim}$ with $X \subseteq U$ and a~$\vdim$-dimensional cubical complex~$E
\subseteq U$ such that $\cBdry{Q} \subseteq E \subseteq U$ and $E$ is a~strong
deformation retract of~$U$; see~\ref{lem:cw-structure}. Moreover, we get that
$(U,E)$ is a~\emph{Borsuk pair}, i.e., has the homotopy extension property HEP;
see~\ref{def:hep} and~\ref{rem:hep-suff}, which will be a useful tool to get
suitable homotopy equivalences.

The topological part of the argument works as follows. Consider
a~$2$-mul\-ti\-pli\-cation~$(\tilde Y,Q)$ of~$(U,Q)$ and assume there exists
a~retraction $\tilde r : \tilde Y \to \cBdry{Q}$. Note that $\cBdry{Q}$ is
a~topological $(\vdim-1)$-dimensional sphere and set $S = \cBdry{Q}$.  Different
copies of~$\scale{1/2} \lIm U \without S \rIm$ may, in~general, intersect inside
$\tilde Y$. Thus, we define the \emph{lifted $2$-multiplication}~$(Y,Q)$
of~$(U,Q)$ in order to prevent this intersection and we observe that $\tilde r$
gives rise to a~retraction $r : Y \to S$; cf.~\ref{def:lifted-mult}. Next, we
consider the pairwise orthogonal affine $(\vdim-1)$-planes, lying in the affine
$\vdim$-plane spanned by~$Q$, parallel to the sides of~$Q$, and passing through
the center of~$Q$. We denote with~$R$ the union of these planes intersected
with~$Q$. Since~$R$ is contractible, by the aforementioned HEP, we deduce
that~$Y$ is homotopy equivalent to~$Y/R$ which, in turn, is homotopy equivalent
to the~wedge sum~$Z$ of~$2^{\vdim}$~copies of~$U$; see~\ref{def:wedge-sum}.
Let~$\Sigma$ be the wedge sum of $2^{\vdim}$~copies of~$S$, $\pi_i : \Sigma \to
S$ be projections onto particular components of~$\Sigma$, $\tau_i : S
\hookrightarrow \Sigma$ be inclusions of components, and $j : \Sigma
\hookrightarrow Z$ be the inclusion map; cf.~\ref{rem:wedge-proj}. The inclusion
$S \hookrightarrow Y$ composed with the homotopy equivalences yields a~map
$\alpha : S \to \Sigma \subset Z$ such that $\deg(\pi_i \circ \alpha) = 1$ for
all $i \in \{ 1,2, \ldots, 2^{\vdim} \}$.  In~particular, since
$\Sh_{\vdim-1}(\Sigma) \simeq \bigoplus_{i=1}^{2^{\vdim}} \Sh_{\vdim-1}(S) =
\integers^{2^{\vdim}}$ by~\cite[Corollary~2.25]{Hatcher2002}, we get
  \begin{equation}
      \label{eq:alpha-sum}
      \alpha_* = {\textstyle \sum_{i=1}^{2^{\vdim}} \tau_{i*} : \Sh_{\vdim-1}(S) \to \Sh_{\vdim-1}(\Sigma) } \,.
  \end{equation}
If~$\rho : Z \to S$ is obtained by composing the retraction~$r$ with the
homotopy equivalences, then $\deg(\rho \circ j \circ \alpha) = 1$. The following
homotopy commutative diagram presents the situation.
\begin{displaymath}
    \xymatrix{
      & & & & S \ar[r]^(0.3){\tau_i}
      & {\Sigma = \textstyle \bigvee_{i=1}^{2^{\vdim}} S} \ar[d]^j \ar[r]^(0.7){\pi_i}
      & S 
      \\
      S \ar[r] \ar@<1ex>@/_/[urrrrr]^{\alpha} 
      & Y \ar@/^/[rr] \ar@{}[rr]|{\approx} \ar@/_15pt/@<-5pt>[rrrrr]_r
      && Y/R \ar@/^/[rr] \ar@/^/[ll] \ar@{}[rr]|(.4){\approx}
      && Z = {\textstyle \bigvee_{i=1}^{2^{\vdim}} U}  \ar[r]^(.7){\rho} \ar@/^/[ll]
      & S
    }
\end{displaymath}
Recalling~\eqref{eq:alpha-sum} we see that~$1 = \deg(\rho \circ j \circ \alpha)
= \sum_{i=1}^{2^\vdim} m_i$, where $m_i = \deg(\rho \circ j \circ \tau_i)$.
Since~$Z$ is a wedge sum of copies of~\emph{the same} space~$U$, we
get~$2^{\vdim}$ maps $f_i : U \to S$ such that $\deg( f_i|_{S} ) = m_i$ and
$\sum_{i=1}^{2^{\vdim}} m_i = 1$.  The question now is whether there exists $g :
U \to S$ which induces the map
\begin{displaymath}
    \sum_{i=1}^{2^{\vdim}} f_{i*} : \Sh_{\vdim-1}(U) \to \Sh_{\vdim-1}(S) = \integers \,.
\end{displaymath}
If so, then $\deg(g|_{S})=1$ and $g$ yields a~retraction $U \to S$
by~\ref{lem:thick-retr}.

This is the point where we need to employ algebra and algebraic topology.
We~prove in~\ref{lem:hom-real} that if~$E$ is a~$\vdim$-dimensional CW-complex,
then any homomorphism $\zeta : \Sh_{\vdim-1}(E) \to \integers$ is induced by
some map $g : E \to S$. The cellular homology of~$E$ (which coincides with the
singular homology) is computed from the chain complex
$(C_k,\delta_k)_{k=0}^{\vdim}$, where the group of~$k$-dimensional
\emph{chains}~$C_{k}$ is the free abelian group generated by the $k$-dimensional
cells (or cubes) of~$E$. Observe that if $G$ is a torsion group (i.e. every
element has finite order), then there exists only one homomorphism $G \to
\integers$, namely, the one sending all elements of~$G$ to zero.  Therefore, we
do not lose any information by composing the~homomorphism~$\zeta$ with the
projection $p : \ker \delta_{\vdim-1} \twoheadrightarrow \ker \delta_{\vdim-1} /
\im \delta_{\vdim} = \Sh_{\vdim-1}(E)$, which yields a~homomorphism $\xi = \zeta
\circ p$ defined on~\emph{cycles}.  Since~$C_{\vdim-1}$ and~$C_{\vdim-2}$ are
\emph{free groups} (in~particular, projective $\integers$-modules), the group
$C_{\vdim-1}$ splits into a direct sum~$C_{\vdim-1} = \ker(\delta_{\vdim-1})
\oplus H$ and we can extend $\xi$~to all~\emph{chains} by setting $\xi|_H = 0$;
cf.~\ref{rem:free-groups}. Hence we can define~$g$ on any
$(\vdim-1)$-dimensional cell~$\sigma$ of~$E$ as~$g|_{\sigma} = h_{\sigma} \circ
\pi$, where $\pi : \sigma \twoheadrightarrow \sigma/\cBdry{\sigma} \simeq S$ and
$h_{\sigma} : S \to S$ is a~map of degree~$\xi(\sigma)$. The next step is to
extend~$g$ to all the $\vdim$-dimensional cells of~$E$. To this end we employ
the \emph{obstruction theory}, which is a~sophisticated version of the Brouwer
fixed-point theorem and its consequence: the fact that a~map $S \to S$ extends
to a~map $Q \to S$ if and only if its topological degree is zero. Given
a~$\vdim$-dimensional cell~$\omega$ of~$E$, we need to ensure that $g|_{\cBdry
  \omega}$ has topological degree zero. Recalling that $\xi(\delta_{\vdim}
\omega) = \zeta \circ p (\delta_{\vdim} \omega) = 0$ whenever $\omega \in
C_{\vdim}$, the required condition on~$g$ follows.

To conclude the argument, we observe that the~$2^{N}$-multiplication of~$(X,Q)$ is
the same as the $2$-multiplication of~$(W,Q)$, where~$W$ is the
$2^{N-1}$-multiplication of~$(X,Q)$; thus, we get the result by induction.

\begin{definition}
    For $k \in \nat$ we set $\sphere{k} = \R^{k+1} \cap \Bdry{\cball 01}$.
\end{definition}

\begin{definition}[\protect{cf.~\cite[Chap.~0, p.~14]{Hatcher2002}}]
    \label{def:hep}
    Let $X$ be a topological space and $A \subseteq X$ be a subspace. Set $I =
    [0,1] \subseteq \R$. We say that the pair $(X,A)$ has the~\emph{homotopy
      extension property HEP} if for every topological space~$Y$ every
    continuous function $h : (X \times \{0\}) \cup (A \times I) \to Y$ extends to
    a~continuous homotopy $H : X \times I \to Y$.
\end{definition}

\begin{remark}[\protect{cf.~\cite[Chap.~0, Example~0.15, p.~15]{Hatcher2002}}]
    \label{rem:hep-suff}
    If $k \in \natp$, $A \subseteq X \subseteq \R^{\adim}$, $A$ is compact of
    dimension~$k$, and there exists an~open set~$U \subseteq \R^{\adim}$ such
    that~$A \subseteq U \subseteq X$ and~$U$ is homeomorphic to~$A \times
    \R^{\adim - k}$ (i.e. $U$ is a~trivial vector bundle over~$A$ with
    fiber~$\R^{\adim-k}$), then $(X,A)$ has the HEP. In~particular, if~$A$ is
    a~sum of a~finite set of~$k$-dimensional cubes and $A \subseteq \Int X$,
    then $(X,A)$ has the HEP.
\end{remark}

\begin{remark}[\protect{cf.~\cite[Chap.~0, Prop.~0.17, p.~15]{Hatcher2002}}]
    \label{rem:hep-htp-equiv}
    If $(X,A)$ has the HEP and $A$ is contractible, then $X$ and $X/A$ are
    homotopy equivalent.
\end{remark}
  \begin{remark}
      We shall also use the following simple facts:
      \begin{itemize}
      \item if $X,Y \subseteq \R^{\adim}$, $A = X \cap Y$, and both~$(X,A)$
          and~$(Y,A)$ have the HEP, than $(X \cup Y, A)$ has the HEP;
      \item if $(X,A)$ has the HEP and $X \subseteq Y$, then $(Y,A)$ has the HEP.
      \end{itemize}
  \end{remark}

\begin{lemma}
    \label{lem:thick-retr}
    Assume $S,X \subseteq \R^{\adim}$ are compact, $S \subseteq X$, $\varepsilon
    \in (0,1)$, $(Y,S)$ has the HEP for any $Y \subseteq
      \R^{\adim}$ with $S \subseteq \Int Y$, and there exists a~Lipschitz
    retraction $\pi : S + \cball{0}{\varepsilon} \to S$. Let $j : S \to
    \R^{\adim}$ be the inclusion map.
    
    The following properties are equivalent:
    \begin{enumerate}
    \item
        \label{i:retr:lip}
        $S$ is a Lipschitz retract of $X$;
    \item
        \label{i:retr:cont}
        $S$ is a retract of $X$;
    \item
        \label{i:retr:thick-cont}
        there exists $\delta \in (0,\varepsilon)$ such that $S$ is a retract of $X +
        \cball{0}{\delta}$;
    \item
        \label{i:retr:deg-one}
        there exist a~continuous map $f : X \to S$ such that $\deg(f \circ j) = 1$.
    \end{enumerate}
\end{lemma}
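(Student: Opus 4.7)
The plan is to close the cycle of implications $(a) \Rightarrow (b) \Rightarrow (d) \Rightarrow (c) \Rightarrow (a)$. The first two implications are immediate: a Lipschitz retraction is continuous, so $(a) \Rightarrow (b)$; and if $r : X \to S$ is a continuous retraction then $f = r$ satisfies $f \circ j = \id{S}$, which has degree~$1$, yielding $(b) \Rightarrow (d)$.

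For $(d) \Rightarrow (c)$, I would extend $f$ to $\bar f : \R^{\adim} \to \R^{\adim}$ by the Tietze extension theorem and use uniform continuity on compact sets to choose $\delta \in (0, \varepsilon)$ with $\bar f \lIm X + \cball 0 \delta \rIm \subseteq S + \cball 0 \varepsilon$; then $g = \pi \circ \bar f : X + \cball 0 \delta \to S$ is continuous, and since $\pi$ restricts to the identity on~$S$, one checks that $g|_S = f \circ j$, so $\deg(g|_S) = 1$. By the Hopf classification of sphere self-maps, $g|_S$ is homotopic to $\id{S}$ through some $h : S \times [0,1] \to S$. Combining $g$ at time~$0$ with $h$ on $S \times [0,1]$ defines a continuous map on $((X + \cball 0 \delta) \times \{0\}) \cup (S \times [0,1])$; since $S \subseteq \Int(X + \cball 0 \delta)$, the HEP hypothesis extends it to $H : (X + \cball 0 \delta) \times [0,1] \to S$, and $H(\cdot, 1)$ is the sought retraction.

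For $(c) \Rightarrow (a)$, the plan is to approximate the continuous retraction $r : X + \cball 0 \delta \to S$ by a Lipschitz map while preserving the identity on~$S$. Extend $r$ to $\bar r : \R^{\adim} \to \R^{\adim}$ by Tietze and mollify to obtain a smooth $\bar r_\eta$ uniformly close to $\bar r$ on compact sets. Choose a Lipschitz cut-off $\chi : \R^{\adim} \to [0,1]$ equal to~$1$ on~$S$ and supported in a small tubular neighbourhood of~$S$, and set $\tilde r(x) = \chi(x)\, x + (1 - \chi(x))\, \bar r_\eta(x)$. Then $\tilde r$ is Lipschitz on~$X$ with $\tilde r|_S = \id{S}$, and for sufficiently small~$\eta$ and sufficiently small cut-off support one verifies that $\tilde r \lIm X \rIm \subseteq S + \cball 0 \varepsilon$, so $\pi \circ \tilde r : X \to S$ is the desired Lipschitz retraction.

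The main obstacle is the estimate in $(c) \Rightarrow (a)$ needed to confine $\tilde r$ to the tubular neighbourhood $S + \cball 0 \varepsilon$ on which $\pi$ is defined. This rests on the fact that $r$ is close to the identity near~$S$ (by continuity and $r|_S = \id{S}$), which, together with the mollification error, must be balanced against~$\varepsilon$ and the size of the cut-off support. A secondary subtlety in $(d) \Rightarrow (c)$ is that the application of Hopf's theorem implicitly relies on~$S$ being a topological sphere, in keeping with Definition~\ref{def:degree}.
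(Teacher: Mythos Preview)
Your proof is correct and follows essentially the same approach as the paper: Tietze extension, mollification, interpolation via a distance-based cutoff, composition with the tubular retraction~$\pi$, and the HEP combined with Hopf's theorem to upgrade a degree-one map to a genuine retraction. The only difference is organizational: you close a single cycle $(a)\Rightarrow(b)\Rightarrow(d)\Rightarrow(c)\Rightarrow(a)$, whereas the paper proves $(b)\Rightarrow(a)$, $(b)\Rightarrow(c)$, and $(d)\Rightarrow(b)$ separately; your $(d)\Rightarrow(c)$ effectively merges the paper's $(d)\Rightarrow(b)$ and $(b)\Rightarrow(c)$, and your $(c)\Rightarrow(a)$ mirrors the paper's $(b)\Rightarrow(a)$ with the cosmetic change of interpolating between $x$ and $\bar r_\eta(x)$ rather than between $\pi(x)$ and $\pi(R(x))$.
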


\begin{proof}
    Clearly the implications $\ref{i:retr:lip} \Rightarrow \ref{i:retr:cont}$,
    $\ref{i:retr:thick-cont} \Rightarrow \ref{i:retr:cont}$, $\ref{i:retr:cont}
    \Rightarrow \ref{i:retr:deg-one}$ hold.

    \emph{Proof of $\ref{i:retr:cont} \Rightarrow \ref{i:retr:lip}$:} Assume $r
    : X \to S$ is a~retraction. Using the Tietze extension theorem (see
    e.g.~\cite[Chap.~7, Problem~O, p.~242]{Kelley1975}), we extend~$r$ to
    a~continuous function $\tilde R : \R^{\adim} \to \R^{\adim}$. We
    mollify~$\tilde R$ to obtain a~smooth function $R : \R^{\adim} \to
    \R^{\adim}$ such that $|R(x) - r(x)| \le 2^{-12} \varepsilon$ for $x \in X$;
    in~particular, $\dist(R(x),S) \le 2^{-12} \varepsilon$ for $x \in X$ so $\pi
    \circ R : X \to S$ is well defined. Since $r(x) = \pi(x)$ for $x \in S$,
    there exists $\delta \in (0,\varepsilon)$ such that $|R(x) - \pi(x)| \le
    2^{-8}\varepsilon$ for $x \in S + \cball{0}{\delta}$. Finally, we define a
    Lipschitz retraction $f : X \to S$ by
    \begin{displaymath}
        f(x) = \left\{
            \begin{aligned}
              &\pi(x) &&\text{if $\dist(x,S) \le 2^{-8}\delta$} \,,
              \\
              &\pi(R(x)) &&\text{if $\dist(x,S) \ge 2^{-7}\delta$} \,,
              \\
              &\pi\bigl( (1-t) \pi(x) + t \pi(R(x)) \bigr) &&\text{if $t = 2^8\dist(x,S)/\delta-1 \in (0,1)$} \,.
            \end{aligned}
        \right.
    \end{displaymath}

    \emph{Proof of $\ref{i:retr:cont} \Rightarrow \ref{i:retr:thick-cont}$:}
    Assume $r : X \to S$ is a retraction. Once again we extend $r$ to a continuous
    function $R : \R^{\adim} \to \R^{\adim}$. Note that $R$ is uniformly
    continuous on every compact subset of~$\R^{\adim}$; hence, there exists
    $\delta \in (0,1)$ such that $R \lIm X + \cball{0}{\delta} \rIm \subseteq S
    + \cball{0}{\varepsilon}$. We get that $\pi \circ R|_{X + \cball{0}{\delta}}$
    is the desired retraction.

    \emph{Proof of $\ref{i:retr:deg-one} \Rightarrow \ref{i:retr:cont}$:} Let $f
    : X \to S$ be continuous and such that $\deg(f \circ j) = 1$. Then there
    exists a~continuous homotopy $h : S \times I \to S$ such that $h(x,0) =
    f(x)$ and $h(x,1) = x$ for $x \in S$. We extend~$f$ to a continuous function
    $F : \R^{\adim} \to \R^{\adim}$ using the Tietze extension theorem and we
    find $\delta \in (0,1)$ such that $F \lIm X + \cball{0}{\delta} \rIm
    \subseteq S + \cball{0}{\varepsilon}$. Set $Y = X + \cball{0}{\delta}$.
    Observe that $\pi \circ F|_Y : Y \to S$ is well defined. Recall that $(Y,S)$
    has the HEP so we may extend~$h$ to a~homotopy $H : Y \times I \to S$ such
    that $H(x,0) = \pi(F(x))$ for every $x \in Y$.  The~desired retraction $r :
    X \to S$ is then given by~$r(x) = H(x,1)$ for~$x \in X$.
\end{proof}

\begin{definition}
    \label{def:wedge-sum}
    Assume $J$ is an index set and for each $\alpha \in J$ we are given
    a~pointed topological space $(X_{\alpha},x_{\alpha})$. We define the
    \emph{wedge sum} to be the pointed topological space
    \begin{displaymath}
        {\textstyle \bigvee_{\alpha \in J} (X_{\alpha},x_{\alpha})
          = \bigl( \bigcup \bigl\{ X_{\alpha} \times \{\alpha\} : \alpha \in J \bigr\} \bigr)
          / \bigl\{ (x_{\alpha},\alpha) : \alpha \in J \bigr\} } 
    \end{displaymath}
    endowed with the quotient topology.

    If $J = \{ 1,2,\ldots N \}$ for some $N \in \natp$, then we use the notation
    \begin{displaymath}
        {\textstyle \bigvee_{\alpha \in J} (X_{\alpha},x_{\alpha})}
        = {\textstyle \bigvee_{i=1}^N (X_i,x_i)}
        = (X_1,x_1) \vee (X_2,x_2) \vee \cdots \vee (X_N,x_N) \,.
    \end{displaymath}
\end{definition}

\begin{remark}
    \label{rem:wedge-proj}
    \begin{enumerate}
    \item Let $Z = \bigvee_{\alpha \in J} (X_{\alpha},x_{\alpha})$ and $\alpha
        \in J$. There exist continuous maps $\tau_{\alpha} : X_{\alpha}
        \hookrightarrow Z$ and $\pi_{\alpha} : Z \twoheadrightarrow
        X_{\alpha}$. The first one is simply the inclusion and the second comes
        from the projection $Z \twoheadrightarrow Z / \bigvee_{\beta \in J
          \without \{\alpha\}} (X_{\beta},x_{\beta})$.
    \item For each $\alpha \in J$ assume $(X_{\alpha},x_{\alpha})$ and
        $(Y_{\alpha},y_{\alpha})$ are pointed topological spaces and there exist
        maps $f_{\alpha} : (X_{\alpha},x_{\alpha}) \to (Y_{\alpha},y_{\alpha})$
        and $g_{\alpha} : (Y_{\alpha},y_{\alpha}) \to (X_{\alpha},x_{\alpha})$
        such that $f_{\alpha} \circ g_{\alpha} \approx \id{Y_{\alpha}}$ and
          $g_{\alpha} \circ f_{\alpha} \approx \id{X_{\alpha}}$. Then
            $\bigvee_{\alpha \in J} (X_{\alpha},x_{\alpha})$ and
            $\bigvee_{\alpha \in J} (Y_{\alpha},y_{\alpha})$ are homotopy
            equivalent.
    \end{enumerate}
\end{remark}

\begin{definition}[\protect{cf.~\cite[\S{3}]{Fomenko1986}}]
    \label{def:cw-complex}
    A \emph{CW-complex} is a topological space $X$ such that for $l \in \nat$
    there exist: an index set $J_l$, a family of $l$-dimensional balls $\{
    \sigma^l_i : i \in J_l \}$, and for each $i \in J_l$ there is a continuous
    \emph{characteristic map} $\varphi^l_i : \sigma^l_i \to X$ such that
    \begin{enumerate}
    \item setting $X^{-1} = \varnothing$ and $X^k = \bigcup_{l=0}^{k} \bigcup_{i
          \in J_l} \im \varphi^l_i$ for $k \in \nat$, we have $X =
        \bigcup_{k=0}^{\infty} X^k$;
    \item $\varphi^l_i$ restricted to $\Int \sigma^l_i$ is a~homeomorphic
        embedding;
    \item the image of $\Bdry{\sigma^l_i}$ under $\varphi^l_i$ is contained
        in~$X^{l-1}$;
    \item
        \label{i:cw:finite-cells}
        the image of $\varphi^l_i$ intersects only finitely many images of
        other characteristic maps;
    \item a set $F \subseteq X$ is closed in~$X$ if and only if
        $(\varphi^l_i)^{-1} \lIm F \rIm$ is closed in $\sigma^l_i$ for all $l
        \in \nat$ and $i \in J_l$.
    \end{enumerate}

    The image of any $\varphi^l_i$ shall be called an \emph{$l$-dimensional
      cell} of~$X$ and the set $X^l$ the \emph{$l$-skeleton} of~$X$. If $X =
    X^k$ for some $k \in \nat$, then we say that $X$ is \emph{$k$-dimensional}
    and if, in addition, all the sets $J_l$ for $l \in \{0,1,\ldots,k\}$ are
    finite, then we say that $X$ is a~\emph{finite CW-complex}.
\end{definition}

\begin{remark}
    A CW-complex $X$ can also be seen as constructed inductively by attaching
    cells $\sigma^l_i$ to $X^{l-1}$ via maps $\varphi^l_i|_{\Bdry{\sigma^l_i}}$;
    cf.~\cite[Chap.~0, p.~5]{Hatcher2002}.
\end{remark}

\begin{remark}
    \label{rem:cubes-cw}
    If $\mathcal A \subseteq \cubes_*^{\adim}$, then $X = \bigcup \mathcal A$ is
    a~CW-complex with $X^k = \tbcup \{ Q \in \cubes^{\adim}_{k} : Q \subseteq X
    \}$ for $k \in \{ 0, 1, \ldots, \adim \}$. If $\mathcal A$ is finite, then
    $X$ is a finite CW-complex.
\end{remark}

\begin{remark}
    \label{rem:cw-homology}
    Assume $X$ is a CW-complex. We shall use cellular homology of~$X$;
    see~\cite[\S{12}]{Fomenko1986} or~\cite[\S{2.2}, p.~137]{Hatcher2002}.
    Recall that for $l \in \nat$ the chain group
    \begin{displaymath}
        C_l(X) = \Sh_l(X^{l},X^{l-1})
    \end{displaymath}
    is the free abelian group with basis $\{ \sigma^l_i : i \in J_l \}$. Next,
    define the differentials
    \begin{gather}
        d_0 : C_0 \to \{ 0 \}
        \quad \text{and} \quad
        d_l : C_l(X) \to C_{l-1}(X)
        \\
        \label{eq:dl}
        \text{by} \quad
        d_l(\sigma^l_i) = \sum_{j \in J_{l-1}} \deg(\psi^l_{i,j}) \sigma^{l-1}_{j} 
        \quad \text{for $l \in \natp$}
        \,,
    \end{gather}
    where $\psi^l_{i,j}$ is defined as the composition
    \begin{equation}
        \label{eq:dl-coeff}
        \Bdry{\sigma^l_{i}} 
        \xrightarrow{\varphi^l_i|_{\Bdry{\sigma^l_i}}} X^{l} 
        \twoheadrightarrow X^l/(X^l \without \sigma^{l-1}_j)
        \xrightarrow{\simeq} \sphere{l-1} \,.
    \end{equation}
    Clearly, by~\ref{def:cw-complex}\ref{i:cw:finite-cells}, the sum
    in~\eqref{eq:dl} is finite. Moreover, $(C_l(X),d_l)_{l=0}^{\infty}$ defines
    a~chain-complex whose homology groups coincide with singular homology groups
    of~$X$; see~\cite[Theorem~2.35]{Hatcher2002} or~\cite[\S{12},
    p.~94]{Fomenko1986}.
\end{remark}

\begin{remark}
    \label{rem:free-groups}
    Let $F$ be a free abelian group. The following observations shall become
    particularly useful:
    \begin{enumerate}
    \item
        \label{i:free:basis}
        If~$G$ is a subgroup of~$F$, then $G$ is itself a free abelian group;
        cf.~\cite[I,\S{}7,Theorem~7.3]{Lang2002}.
    \item
        \label{i:free:splitting}
        If $G$ is another free abelian group and $d : F \to G$, then $F$ splits
        into a direct sum $F = \ker d \oplus H$ for some subgroup~$H$ of~$F$.

        To prove the above claim (b), let $A = \im d \subseteq G$. Then $A$ is
        a~subgroup of~$G$; hence, $A$ is a free abelian group. Let $\{ a_i : i
        \in J \}$ be a basis of~$A$. In order to prove the existence of a splitting, it suffices
        to define a~homomorphism $f : A \to F$ such that $d \circ f =
        \id{A}$. For each $i \in J$ we choose arbitrarily $b_i \in F$ such that
        $d(b_i) = a_i$ and set $f(a_i) = b_i$. Then $f$ extends to
        a~homomorphism $A \to F$ simply because~$A$ is free.
    \end{enumerate}
\end{remark}

Next, we prove that if $X$ is a $(k+1)$-dimensional CW-complex, then any
homomorphism from the $k^{\mathrm{th}}$~homology group~$\Sh_k(X)$ to the group
of integers $\integers$ is induced by some map $X \to \sphere{k}$.

\begin{lemma}
    \label{lem:hom-real}
    Assume $k \in \nat$, $X$ is a $(k+1)$-dimensional CW-complex, and there is
    given a homomorphism $\zeta : \Sh_k(X) \to \integers$. Then there exists $f
    : X \to \sphere{k}$ such that $f_* = \zeta$.
\end{lemma}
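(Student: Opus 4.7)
The plan is to realize $\zeta$ by building $f : X \to \sphere{k}$ cell-by-cell via the standard first-obstruction argument: encode $\zeta$ as a homomorphism on cellular chains that vanishes on boundaries, and then invoke the classical fact that a~map $\sphere{k} \to \sphere{k}$ extends continuously to the closed $(k+1)$-ball precisely when its topological degree is zero.

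First I~would pass from the homology group to the chain level. With $C_l = C_l(X)$ and $d_l$ as in Remark~\ref{rem:cw-homology}, one has $\Sh_k(X) = \ker d_k / \im d_{k+1}$; let $p : \ker d_k \twoheadrightarrow \Sh_k(X)$ be the quotient and set $\xi = \zeta \circ p$. Applying Remark~\ref{rem:free-groups}\ref{i:free:splitting} to $d_k : C_k \to C_{k-1}$ (a map of free abelian groups), I~would split $C_k = \ker d_k \oplus H$ and extend $\xi$ to $\tilde \xi : C_k \to \integers$ by setting $\tilde \xi|_H = 0$. By construction $\tilde \xi \circ d_{k+1} = 0$, since $\im d_{k+1} \subseteq \ker d_k$ and $\xi \circ d_{k+1} = \zeta \circ p \circ d_{k+1} = 0$.

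Next I~would build $f$ on the $k$-skeleton. Fix a~basepoint $x_0 \in \sphere{k}$ and declare $f \equiv x_0$ on $X^{k-1}$. For each $k$-cell $\sigma^k_i$ with characteristic map $\varphi^k_i$ one has $\varphi^k_i \lIm \Bdry \sigma^k_i \rIm \subseteq X^{k-1}$, so $f \circ \varphi^k_i$ is already the constant map $x_0$ on $\Bdry \sigma^k_i$. Extend it over the interior by composing the quotient $\sigma^k_i \twoheadrightarrow \sigma^k_i / \Bdry \sigma^k_i \simeq \sphere{k}$ with a~self-map of $\sphere{k}$ of degree $\tilde \xi(\sigma^k_i)$.

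The crucial step is extending $f$ across each $(k+1)$-cell $\omega$. Its attaching map $\psi : \sphere{k} \to X^k$ composed with $f$ yields $f \circ \psi : \sphere{k} \to \sphere{k}$, which extends to the closed $(k+1)$-ball exactly when $\deg(f \circ \psi) = 0$. Computing this degree by post-composing with the quotients $X^k \twoheadrightarrow X^k / (X^k \without \sigma^k_i) \simeq \sphere{k}$ as in~\eqref{eq:dl-coeff}, and using additivity and multiplicativity of degree, one obtains
\begin{displaymath}
    \deg(f \circ \psi) = \sum_{i \in J_k} \deg(\psi^{k+1}_{\omega,i}) \, \tilde \xi(\sigma^k_i) = \tilde \xi(d_{k+1}(\omega)) = 0 \,,
\end{displaymath}
so the extension exists. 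Unwinding definitions, the induced cellular chain map of $f$ agrees with $\tilde \xi$ in degree $k$, hence descends to $\zeta$ on $\Sh_k(X)$. The main obstacle will be this degree identity on the attaching sphere, where the cellular boundary formula~\eqref{eq:dl} converts the algebraic condition $\tilde \xi \circ d_{k+1} = 0$ into the precise topological condition for extension; all other steps are mechanical once $\tilde \xi$ has been produced.
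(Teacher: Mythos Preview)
Your proposal is correct and follows essentially the same route as the paper's proof: lift $\zeta$ to a cochain on $C_k$ via the splitting of Remark~\ref{rem:free-groups}\ref{i:free:splitting}, define $f$ on the $k$-skeleton by prescribing the degree on each $k$-cell, and extend over the $(k+1)$-cells by checking that the obstruction---the degree of $f$ on each attaching sphere---equals $\tilde\xi(d_{k+1}\omega)=0$. The only cosmetic difference is that the paper names the extended cochain $\xi$ from the outset rather than writing $\tilde\xi$ for the extension of $\xi = \zeta\circ p$.
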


\begin{proof}
    For $l \in \{0,1,2,\ldots,k+1\}$ let $J_l$ be the set indexing
    $l$-dimensional cells of~$X$ and for $i \in J_l$ let $\{ \sigma^l_i : i \in
    J_l \}$, $\varphi^l_i : \sigma^l_i \to X$, $d_l$, $C_l(X)$, $X^l$ be defined
    as in~\ref{def:cw-complex} and~\ref{rem:cw-homology}.

    By definition $C_k(X)$ are free abelian groups. Set $K = \ker d_{k}
    \subseteq C_k(X)$ and employ~\ref{rem:free-groups}\ref{i:free:splitting} to
    find another subgroup $L \subseteq C_k(X)$ such that $C_k(X) = K \oplus L$.
    Let $p : K \twoheadrightarrow \Sh_k(X)$ and $q : K \oplus L
    \twoheadrightarrow K$ be canonical projections. Define $\xi : C_k(X) \to
    \integers$ as the composition
    \begin{displaymath}
        C_k(X) \xrightarrow{\ q\ } K
        \xrightarrow{\ p\ } \Sh_k(X)
        \xrightarrow{\ \zeta \ } \integers \,.
    \end{displaymath}
    We~record now some trivial observations
    \begin{equation}
        \label{eq:im-dk+1}
        \zeta(x) = 0 \quad \text{whenever $x \in \Sh_k(X)$ has finite order} \,,
        \quad
        \zeta \circ p = \xi|_K \,,
        \quad
        \xi \circ d_{k+1} = 0 \,.
    \end{equation}
    We~shall first construct $\gamma : X^k \to \sphere{k}$ such that $\gamma_* :
    \Sh_k(X^k) \to \integers$ equals $\zeta \circ p$ and then extend $\gamma$
    to~$f : X^{k+1} \to \sphere{k}$ using a bit of obstruction theory.

    For each $i \in J_k$ the space $\sigma^k_i / \Bdry{\sigma^k_i}$ is
    homeomorphic to $\sphere{k}$ and we define
    \begin{displaymath}
        \gamma_i : \sigma^k_i / \Bdry{\sigma^k_i} \to \sphere{k}
        \quad \text{so that} \quad
        \deg(\gamma_i) = \xi(\sigma^k_i) \,.
    \end{displaymath}
    Note that the space $X^k/X^{k-1}$ is homeomorphic to the wedge sum
    of~topological spheres $\bigvee_{i \in J_k} (\sigma^k_i / \Bdry{\sigma^k_i},
    \eqclsl \Bdry{\sigma^k_i} \eqclsr)$.  We~construct the map
    \begin{displaymath}
        \tilde \gamma : X^k/X^{k-1} \to \sphere{k} 
        \quad \text{so that} \quad
        \tilde \gamma|_{\sigma^k_i / \Bdry{\sigma^k_i}} = \gamma_i
        \quad \text{for $i \in J_k$} \,.
    \end{displaymath}
    Let $\pi : X^k \twoheadrightarrow X^k/X^{k-1}$ be the projection. Finally,
    set
    \begin{displaymath}
        \gamma = \tilde \gamma \circ \pi \,.
    \end{displaymath}
    Note that $\Sh_k(X^k) = K$. One readily verifies that $\gamma_* = \xi|_K =
    \zeta \circ p$.

    Now we need to extend $\gamma$ to the $(k+1)$-dimensional cells in~$X$.
    Employing the obstruction theory~\cite[\S{17}]{Fomenko1986} this is possible
    if for each $j \in J_{k+1}$ the composition
    \begin{displaymath}
        \Bdry{\sigma^{k+1}_j} \xrightarrow{\varphi^{k+1}_j|_{\Bdry{\sigma^{k+1}_j}}}
        X^k \xrightarrow{\ \gamma\ } \sphere{k}
    \end{displaymath}
    has topological degree zero. However, this degree equals exactly
    $\xi(d_{k+1}(\sigma^{k+1}_{j}))$ which is zero by~\eqref{eq:im-dk+1}.
    Therefore, there exists $f : X \to \sphere{k}$ such that $f|_{X^k} =
    \gamma$; in~particular, $f_* : \Sh_k(X) \to \integers$ equals~$\zeta$.
\end{proof}

\begin{remark}
    \label{rem:alt-proof}
    Employing some more sophisticated tools of algebraic topology, a shorter
    proof of Lemma \ref{lem:hom-real} can be given as follows. The universal coefficient
    theorem~\cite[Theorem~3.2]{Hatcher2002} provides an epimorphism
    \begin{displaymath}
        h : \Sh^{k}(X;\integers) \twoheadrightarrow \Hom(\Sh_k(X),\integers) \,.
    \end{displaymath}
    On the other hand, there exists an isomorphism
    (see~\cite[Theorem~4.57]{Hatcher2002})
    \begin{displaymath}
        T : \bigl[ X, K(\integers,k) \bigr]_{\mathrm{htp}} \xrightarrow{\ \simeq\ } \Sh^{k}(X;\integers) \,,
    \end{displaymath}
    where $[ X, K(\integers,k) ]_{\mathrm{htp}}$ denotes the set of homotopy
    classes of maps $X \to K(\integers,k)$ and $K(\integers,k)$ is the
    Eilenberg-MacLane space; cf.~\cite[\S{4.2}, p.~365]{Hatcher2002}. Therefore,
    any homomorphism $\Sh_k(X) \to \integers$ is induced by some map $X \to
    K(\integers,k)$. Observing, that $K(\integers,k)$ is a CW-complex obtained
    from the sphere $\sphere{k}$ by gluing in cells of dimension at least $k+2$,
    we see, since $X$ is $(k+1)$-dimensional and the homotopy groups
    $\pi_l(\sphere{k+2}) = 0$ for $l \in \{1,2,\ldots,k+1\}$, that any map $X
    \to K(\integers,k)$ is homotopic to a map whose image lies in $\sphere{k}$.
\end{remark}

\begin{remark}
    \label{rem:cp2}
    The bound on the dimension of~$X$ plays a crucial role
    in~\ref{lem:hom-real}. Indeed, if the dimension of $X$ is bigger than~$k+1$,
    then an element of~$\Hom(\Sh_k(X),\integers)$ might not be induced by a~map
    $X \to \sphere{k}$ as the following example shows. Let $k = 2$ and $X$ be
    the complex projective space of real dimension~$4$ (often denoted
    $\mathbf{CP}^2$). Then $X$ is a~CW-complex constructed by attaching
    a~$4$-dimensional cell to~$\sphere{2}$ via the Hopf fibration $\sphere{3}
    \to \sphere{2}$. We have
    \begin{displaymath}
        \Sh_2(X) =\Sh^2(X) = \Sh^4(X) = \integers \,.
    \end{displaymath}
    Recall that $\Sh^*(X)$ is the graded ring $\integers[\sigma] / \sigma^3$,
    where $\sigma$ is the generator of~$\Sh^2(X)$; 
      cf.~\cite[Theorem~3.12]{Hatcher2002}.  Finally, since all the homology
    and cohomology groups of~$X$ are free, the universal coefficient theorem
    provides a natural isomorphism
    \begin{displaymath}
        j : \Sh^{2}(X) \xrightarrow{\simeq} \Hom(\Sh_2(X),\integers) \,.
    \end{displaymath}

    Assume there exists a map $f : X \to \sphere{2}$ such that $f_* : \Sh_2(X)
    \to \Sh_2(\sphere{2})$ is an isomorphism. In~consequence, $f^* :
    \Sh^2(\sphere{2}) \to \Sh^2(X)$ is also an isomorphism. However, the map
    $f^*$ is a~\emph{homomorphism of graded rings} and this gives a
    contradiction because the square of the generator of~$\Sh^2(\sphere{2})$ is
    zero while the square of the generator of~$\Sh^2(X)$ is the generator
    of~$\Sh^4(X)$.
\end{remark}

\begin{corollary}
    \label{cor:deg-gcd}
    Let $k \in \nat$, $X$ be a $(k+1)$-dimensional CW-complex, and $j :
    \sphere{k} \to X$ be continuous. Define
    \begin{displaymath}
        D = \bigl\{ | \deg(f \circ j) | : f : X \to \sphere{k} \text{ continuous} \bigr\} \without \{0\} \,.
    \end{displaymath}
    If $D \ne \varnothing$ and $A = \min D$, then
    \begin{displaymath}
        D = \{ m A : m \in \natp  \} \,.
    \end{displaymath}
\end{corollary}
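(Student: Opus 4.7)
My plan is to reduce the corollary to an elementary algebraic observation, with Lemma~\ref{lem:hom-real} doing essentially all of the topological work. First I would let $\sigma$ denote the generator of $\Sh_k(\sphere{k})$ fixed in Definition~\ref{def:degree} and set $\alpha = j_*(\sigma) \in \Sh_k(X)$. By naturality of the induced map in homology, every continuous $f : X \to \sphere{k}$ satisfies $(f \circ j)_*(\sigma) = f_*(j_*(\sigma)) = f_*(\alpha)$, so under the identification $\Sh_k(\sphere{k}) = \integers$ via~$\sigma$ we have $\deg(f \circ j) = f_*(\alpha)$.

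The key step is then to invoke Lemma~\ref{lem:hom-real}: since $X$ is $(k+1)$-dimensional, every homomorphism $\zeta : \Sh_k(X) \to \integers$ is realised as $f_*$ for some continuous $f : X \to \sphere{k}$. Consequently
\begin{equation*}
    D = \bigl\{\, |\zeta(\alpha)| : \zeta \in \Hom(\Sh_k(X), \integers) \,\bigr\} \without \{0\} \,.
\end{equation*}

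To finish, I would observe that the set $H = \{\zeta(\alpha) : \zeta \in \Hom(\Sh_k(X), \integers)\}$ is the image of the evaluation homomorphism $\Hom(\Sh_k(X), \integers) \to \integers$, $\zeta \mapsto \zeta(\alpha)$, hence a subgroup of~$\integers$. Every such subgroup has the form $N \integers$ for some $N \in \nat$, and the hypothesis $D \ne \varnothing$ forces $N \ge 1$, whence $A = \min D = N$ and $D = \{\,|nA| : n \in \integers\,\} \without \{0\} = \{ mA : m \in \natp \}$. There is no genuine obstacle in this argument; all the difficulty has been packaged into Lemma~\ref{lem:hom-real}, which in particular exploits the bound on the dimension of~$X$ as stressed in Remark~\ref{rem:cp2}.
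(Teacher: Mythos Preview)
Your proof is correct and follows essentially the same approach as the paper: both invoke Lemma~\ref{lem:hom-real} to identify $\{\deg(f\circ j):f\}$ with $\{\zeta(\alpha):\zeta\in\Hom(\Sh_k(X),\integers)\}$ and then use that this set is a subgroup of~$\integers$. The only cosmetic difference is that the paper makes the subgroup structure explicit via B\'ezout's identity (showing $d_1,d_2\in D\Rightarrow\gcd(d_1,d_2)\in D$) together with post-composition by degree-$m$ self-maps of~$\sphere{k}$, whereas you obtain it in one stroke as the image of the evaluation homomorphism; the content is identical.
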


\begin{proof}
    If $D = \varnothing$ there is nothing to prove, so we assume $D \ne
    \varnothing$. Let $f_1,f_2 : X \to \sphere{k}$ be two continuous maps such
    that $d_i = | \deg(f_i \circ j) | \in \natp$ for $i \in \{1,2\}$. Set $d =
    \gcd(d_1,d_2)\in \natp$. By the Euclidean algorithm, there exist integers
    $c_1, c_2$ such that $d = c_1 d_1 + c_2 d_2$. We employ~\ref{lem:hom-real} to
    find a map $f : X \to \sphere{k}$ such that $f_* = c_1 f_{1*} + c_2
    f_{2*}$. Then $| \deg(f \circ j) | = d \in D$.

    We have shown that whenever $d_1,d_2 \in D \subseteq \natp$, then
    $\gcd(d_1,d_2) \in D$. Moreover, if $f : X \to \sphere{k}$, $|\deg(f \circ
    j)| = A \in D$, and $m \in \natp$, then $mA \in D$ because one can
    post-compose $f$ with a map $\sphere{k} \to \sphere{k}$ of degree~$m$.
\end{proof}

\begin{corollary}
    \label{cor:deg-wedge-sum}
    Let $k,N \in \nat$, $X$ be $(k+1)$-dimensional CW-complex, $x_0 \in X$, $Z =
    \bigvee_{i=1}^{N} (X,x_0)$ and $j : \sphere{k} \to Z$ be continuous. For $l
    \in \{1,2,\ldots,N\}$ define $\pi_l : Z \to X$ as in~\ref{rem:wedge-proj}.
    Assume there exists $\varphi : \sphere{k} \to X$ such that for $l \in
    \{1,2,\ldots,N\}$ the map $\pi_l \circ j : \sphere{k} \to X$ is homotopic
    either to~$\varphi$ or to the constant map and $\pi_1 \circ j \approx
    \varphi$. Set
    \begin{gather}
        D = \bigl\{ | \deg(f \circ j) | : f : Z \to \sphere{k} \text{ continuous} \bigr\} \,,
        \\
        E = \bigl\{ | \deg(g \circ \varphi) | : g : X \to \sphere{k} \text{ continuous} \bigr\} \,.
    \end{gather}
    Then $D = E$.
\end{corollary}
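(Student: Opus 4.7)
The plan is to reduce the problem to a computation in singular homology and then invoke Lemma~\ref{lem:hom-real}.

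The first step is to compute $j_*$ using the wedge-sum decomposition. By \cite[Corollary~2.25]{Hatcher2002}, the projections $\pi_{l*}$ assemble into an isomorphism $\Sh_k(Z) \simeq \bigoplus_{l=1}^{N} \Sh_k(X)$ whose inverse is $(\alpha_l)_l \mapsto \sum_{l=1}^{N} \tau_{l*}(\alpha_l)$. Let $\sigma_0$ denote the generator of $\Sh_k(\sphere{k})$ and set $L = \{ l : \pi_l \circ j \approx \varphi \}$, so that $1 \in L$ by hypothesis. The remaining indices satisfy $\pi_l \circ j \approx \mathrm{const}$, hence $(\pi_l \circ j)_*(\sigma_0) = 0$. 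Combining these identifications, the class $j_*(\sigma_0)$ is read off as
\begin{equation*}
    j_*(\sigma_0) = \sum_{l \in L} \tau_{l*}\bigl( \varphi_*(\sigma_0) \bigr) \,.
\end{equation*}
For any continuous $f : Z \to \sphere{k}$, writing $f_l = f \circ \tau_l : X \to \sphere{k}$ and applying $f_*$ to this identity gives the key formula
\begin{equation*}
    \deg(f \circ j) = \sum_{l \in L} \deg(f_l \circ \varphi) \,.
\end{equation*}

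From here the inclusion $E \subseteq D$ is direct: given $g : X \to \sphere{k}$, I would define $f : Z \to \sphere{k}$ by setting $f \circ \tau_1 = g$ and letting $f \circ \tau_l$ be the constant map at $g(x_0)$ for $l \neq 1$. This is well defined on the wedge since all inclusions $\tau_l$ agree at the basepoint. Because $1 \in L$, the displayed formula collapses to $\deg(f \circ j) = \deg(g \circ \varphi)$, proving $|\deg(g \circ \varphi)| \in D$.

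The opposite inclusion $D \subseteq E$ is the step that actually uses the structure of $X$. Given $f : Z \to \sphere{k}$, the homomorphism $\sum_{l \in L} (f_l)_* : \Sh_k(X) \to \Sh_k(\sphere{k}) = \integers$ is, by Lemma~\ref{lem:hom-real} applied to the $(k+1)$-dimensional CW-complex $X$, realised as $g_*$ for some continuous $g : X \to \sphere{k}$. Composing with $\varphi_*$ and invoking the displayed formula once more yields $\deg(g \circ \varphi) = \deg(f \circ j)$, so $|\deg(f \circ j)| \in E$. The sole non-routine input is Lemma~\ref{lem:hom-real}, which encapsulates the obstruction-theoretic passage from an algebraic homomorphism to an actual continuous map; once that lemma is available, the rest of the argument is pure functoriality of singular homology and the wedge-sum formula.
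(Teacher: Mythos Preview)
Your proof is correct and follows essentially the same line as the paper's: decompose $j_*$ via the wedge-sum homology isomorphism, then for $E \subseteq D$ precompose $g$ with $\pi_1$ (your construction of $f$ is exactly $g \circ \pi_1$), and for $D \subseteq E$ reduce to realising a sum of homomorphisms $\Sh_k(X) \to \integers$ by a single map $X \to \sphere{k}$. The only difference is in packaging the last step: the paper invokes Corollary~\ref{cor:deg-gcd} to conclude that $\bigl|\sum_l \deg(f_l \circ \pi_l \circ j)\bigr| \in E$, whereas you bypass that corollary and apply Lemma~\ref{lem:hom-real} directly to the homomorphism $\sum_{l \in L}(f_l)_*$. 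Since Corollary~\ref{cor:deg-gcd} is itself proved from Lemma~\ref{lem:hom-real}, your route is marginally more direct; the paper's route has the advantage of isolating the structural fact that $E \without \{0\}$ consists of the positive multiples of a single integer, which is of independent interest.
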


\begin{proof}
    For $l \in \{1,2,\ldots,N\}$ let $\tau_l : X \to Z$ be the injection as
    in~\ref{rem:wedge-proj}. If $g : X \to \sphere{k}$ is continuous, then $f =
    g \circ \pi_1 : Z \to \sphere{k}$ is homotopic to $g \circ \varphi$ so
    $\deg(g \circ \varphi) = \deg(f \circ j)$ and we get $E \subseteq D$.
    On the other hand if $f : Z \to \sphere{k}$, then we consider the maps $f_l
    = f \circ \tau_l : X \to \sphere{k}$ for $l \in \{1,2,\ldots,N\}$ to see
    that
    \begin{displaymath}
        D \ni | \deg(f \circ j) | =  \bigl| {\textstyle \sum_{l=1}^N \deg(f_l \circ \pi_l \circ j)} \bigr| \in E 
        \quad \text{by~\ref{cor:deg-gcd}}\,;
    \end{displaymath}
    thus, $D \subseteq E$.
\end{proof}

\begin{lemma}
    \label{lem:cw-structure}
    Let $J = [0,2]$, $\varepsilon \in (0,\infty)$ and assume
    \begin{gather}
        Q \in \cubes^{\adim}_{\vdim}\,,
        \quad
        S = \cBdry{Q} \,,
        \quad
        X \subseteq \R^{\adim} \text{ is compact} \,,
        \quad
        S \subseteq X \,,
        \quad
        \HM^{\vdim}(X) < \infty \,.
    \end{gather}
    Then there exist: a Lipschitz map $f : I \times \R^{\adim} \to \R^{\adim}$,
    a~compact set $E \subseteq \R^{\adim}$, an open set~$U \subseteq
    \R^{\adim}$, and a~finite set $\mathcal B \subseteq \cubes^{\adim}_{\vdim}$
    such that
    \begin{gather}
        S \subseteq E = \tbcup \mathcal B = f\lIm \{2\} \times U \rIm \,,
        \quad
        X \subseteq U \subseteq X + \cball{0}{\varepsilon} \,,
        \quad
        f \lIm J \times U \rIm \subseteq V \,,
        \\
        f(t,x) = x \quad \text{for $(t,x) \in I \times E$}  \,,
        \quad
        \text{$E$ is a strong deformation retract of $U$} \,.
    \end{gather}
\end{lemma}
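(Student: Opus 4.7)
The plan is to apply Theorem~\ref{thm:mae:dt} to a sufficiently fine dyadic cubical complex and then read off the conclusion from the theorem together with Remark~\ref{rem:dt-retract}. I~would begin by choosing $N \in \nat$ so large that $2^{-N}\sqrt{\adim} < \varepsilon/8$ and, simultaneously, the given cube~$Q$ decomposes into $\vdim$-dimensional dyadic cubes from~$\cubes^\adim_\vdim(N)$; this is possible because $Q \in \cubes^\adim_\vdim$ already has a~dyadic corner and side-length. Setting $\mathcal F = \cubes^\adim_\adim(N)$, which is admissible, and $\mathcal A = \{K \in \mathcal F : K \cap (X + \cball{0}{\varepsilon/4}) \ne \varnothing\}$, the family $\mathcal A$ is finite (as $X$ is compact), $G = \Int \tbcup \mathcal A$ contains~$X$, and $\tbcup \mathcal A \subseteq X + \cball 0{\varepsilon/2}$. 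The upper bound on $2^{-N}$ further guarantees that any $\adim$-cube of~$\mathcal F$ touching~$X$, together with all its neighbors, is contained in $X + \cball 0{\varepsilon/4}$ and hence lies in~$\mathcal A$; in the notation of Remark~\ref{rem:dt-retract} one has $X \subseteq H$.

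Next, applying Theorem~\ref{thm:mae:dt} to this data with the $\vdim$-set being~$X$ yields a~Lipschitz map $f : J \times \R^\adim \to \R^\adim$, a~finite set $\mathcal B \subseteq \CX(\mathcal F) \cap \cubes^\adim_\vdim$, and an open set $V \supseteq X$. I would then take $U = V \cap G$ and $E = \tbcup \mathcal B$. The chain $X \subseteq U \subseteq \tbcup \mathcal A \subseteq X + \cball 0\varepsilon$ is immediate from the choices above; the identity $E = f\lIm \{2\} \times U \rIm$ follows from the corresponding clause of Theorem~\ref{thm:mae:dt}; the assertion $f(t,x) = x$ for $(t,x) \in I \times E$ is the invariance clause of the theorem restricted to $\tbcup \mathcal B$; and Remark~\ref{rem:dt-retract} provides both the invariance $f\lIm J \times U \rIm \subseteq U$ and the fact that~$E$ is a strong deformation retract of~$U$.

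The only delicate point is the verification $S \subseteq E$. Since $Q$ decomposes into $\vdim$-dimensional dyadic subcubes at scale $2^{-N}$, its boundary $S = \cBdry{Q}$ lies in the $(\vdim-1)$-skeleton of~$\CX(\mathcal F)$. Inspecting the construction of~$f$ in the proof of Theorem~\ref{thm:mae:dt}, the first phase $t \in [0,1]$ is the identity on the whole $\vdim$-skeleton, while the second phase $t \in [1,2]$ uses central projections from points interior to $\vdim$-cubes of $\mathcal A_\vdim \setminus \mathcal B$ and therefore acts as the identity on the $(\vdim-1)$-skeleton $\tbcup \mathcal A_{\vdim-1}$. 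Each $(\vdim-1)$-subcube of~$S$ meets~$G$ because $S \subseteq X \subseteq G$, so it belongs to $\mathcal A_{\vdim-1}$, and I conclude that $f(t,x) = x$ for every $t \in J$ and $x \in S$. Combined with $S \subseteq X \subseteq V$ this gives $S \subseteq f\lIm \{2\} \times V \rIm$; the same cube-counting as in the first paragraph shows $S \subseteq G$; altogether $S \subseteq f\lIm \{2\} \times V \rIm \cap G = \tbcup \mathcal B \cap G \subseteq E$. I expect the main obstacle to be purely bookkeeping: simultaneously aligning the dyadic scale of~$\mathcal F$ with the intrinsic scale of~$Q$ and enforcing $2^{-N}\sqrt\adim < \varepsilon/8$, but both demands are lower bounds on~$N$ and are trivially compatible.
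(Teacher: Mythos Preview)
Your proposal is correct and follows essentially the same route as the paper: choose a fine dyadic scale compatible with both~$\varepsilon$ and~$Q$, take~$\mathcal A$ to be a one-cube thickening of the cubes meeting~$X$, apply Theorem~\ref{thm:mae:dt}, set $U = V \cap G$, and invoke Remark~\ref{rem:dt-retract}. The paper defines~$\mathcal A$ via the tripled cube~$\tilde R$ rather than your $\varepsilon/4$-thickening, and compresses the $S \subseteq E$ step to the single line ``since $S \subseteq \tbcup \cubes^{\adim}_{\vdim-1}(N)$ we get $S \subseteq E$'', but your unpacking of that step through the action of the two phases of~$f$ on the $(\vdim-1)$-skeleton is exactly the mechanism behind the paper's assertion.
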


\begin{proof}
    For $R \in \cubes^{\adim}$ denote by $\tilde R$ the $\adim$-dimensional cube
    with the same center as~$R$ and side-length three times bigger than~$R$.
    Let $N \in \natp$ be such that $2^{-N+4} \sqrt{\adim} < \min\{ \varepsilon,
    \side{Q} \}$ and define
    \begin{displaymath}
        \mathcal A = \bigl\{ R \in \cubes^{\adim}_{\adim}(N) : \tilde R \cap X \ne \varnothing \bigr\}  \,.
    \end{displaymath}
    Apply~\ref{thm:mae:dt} with $\cubes^{\adim}_{\adim}$, $\mathcal A$, $X$ in
    place of~$\mathcal F$, $\mathcal A$, $S$ to obtain a~Lipschitz map $f : J
    \times \R^{\adim} \to \R^{\adim}$, an~open set $V \subseteq \R^{\adim}$, and
    a~finite set $\mathcal B \subseteq \cubes^{\adim}_{\vdim}(N)$. Set $E =
    \tbcup \mathcal B$ and $U = V \cap \Int{\tbcup \mathcal A}$ and
    recall~\ref{rem:dt-retract}. Since $S \subseteq \tbcup
    \cubes^{\adim}_{\vdim-1}(N)$ we get $S \subseteq E$.
\end{proof}

For convenience and brevity of the notation we introduce the following
definition.

\begin{definition}
    \label{def:Rinfty}
    We define $\R^{\infty}$ to be the direct sum of countably many copies
    of~$\R$ and for $i \in \natp$ we let $e_i \in \R^{\infty}$ be the standard
    basis vector of the $i^{\mathrm{th}}$ copy of~$\R$. Thus, $\R^{\infty}$ is
    the set of all finite linear combinations of the vectors $\{ e_i : i \in
    \natp\}$.
\end{definition}

We want to compare, up to homotopy, a multiplication $(Y,Q)$ of some cubical
test pair $(X,Q)$ with the wedge sum of certain number of copies of~$X$.
However, it might happen that two copies of~$X$ placed side by side intersect
outside~$\cBdry{Q}$. To prevent this, we define a lifted multiplication so that
different copies of~$X$ intersect only along $\cBdry{Q}$.

\begin{definition}
    \label{def:lifted-mult}
    Let $X$, $Q$, $k$, $\mathcal A = \{ K_1, \ldots, K_{k^{\vdim}} \}$ be as
    in~\ref{def:multiplication}. Let $e_i$ for $i \in \natp$ be as
    in~\ref{def:Rinfty}. Define $j : \R^{\adim} \to \R^{\adim} \times
    \R^{\infty}$, $p : \R^{\adim} \times \R^{\infty} \to \R^{\adim}$, and
    $\eta_i : \R^{\adim} \to \R^{\adim} \times \R^{\infty}$ for $i \in \{1, 2,
    \ldots, k^{\vdim} \}$ by
    \begin{displaymath}
        j(x) = (x,0) \,,
        \quad
        p(x,y) = x \,,
        \quad 
        \eta_i(x) = j \circ \trans{\centre{K_i}} \circ \scale{1/k} \circ \trans{-\centre{Q}}(x) + \dist(x,\cBdry{Q}) e_i \,.
    \end{displaymath}
    We say that $(Y,j\lIm Q \rIm)$ is the \emph{lifted $k$-multiplication} of
    $(X,Q)$ if
    \begin{displaymath}
        Y = \tbcup \bigl\{ \eta_i \lIm X \rIm : i \in \{ 1,2, \ldots, k^{\vdim} \} \bigr\} \subseteq \R^{\adim} \times \R^{\infty} \,.
    \end{displaymath}
\end{definition}

\begin{lemma}
    \label{lem:ind-step}
    Assume
    \begin{gather}
        U \subseteq \R^{\adim} \text{ is open} \,,
        \quad
        Q = [0,1]^{\vdim} \times \{0\}^{\adim - \vdim} \in \cubes^{\adim}_{\vdim}(0) \,,
        \quad
        S = \cBdry{Q} \,,
        \quad
        N \in \natp \,,
        \\
        \mathcal B \subseteq \cubes^{\adim}_{\vdim} \text{ is finite} \,,
        \quad
        S \subseteq E = \tbcup \mathcal B \subseteq U \,,
        \quad
        \text{$E$ is a strong deformation retract of $U$} \,,
        \\
        \text{$j$ and $p$ are as in~\ref{def:lifted-mult}} \,,
        \quad
        \text{$(Y,j \lIm Q \rIm)$ is the lifted $2^N$-multiplication of $(U,Q)$} \,,
        \\
        \text{$(Z,j \lIm Q \rIm)$ is the lifted $2^{N-1}$-multiplication of $(U,Q)$} \,.
    \end{gather}
    If $j \lIm S \rIm$ is a Lipschitz retract of~$Y$, then $j \lIm S \rIm$ is
    a~Lipschitz retract of~$Z$.
\end{lemma}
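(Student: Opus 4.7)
The plan is to carry out the topological strategy outlined in the introduction to Section~\ref{sec:all-test-pairs-good}. Suppose there is a~Lipschitz retraction $r: Y \to j\lIm S \rIm$; by Lemma~\ref{lem:thick-retr} it suffices to produce a~continuous map $g: Z \to j\lIm S \rIm$ with $\deg(g \circ \iota) = 1$, where $\iota: j\lIm S \rIm \hookrightarrow Z$ is the inclusion. To this end, partition $Q$ into $2^{\vdim}$ subcubes $Q_1, \ldots, Q_{2^{\vdim}}$ of side-length $\side{Q}/2$ and, for each~$\ell$, let $\phi_\ell: Z \to Y$ be the canonical embedding that scales $Z$ by~$1/2$, translates over~$Q_\ell$, and adjusts the $\R^{\infty}$-coordinates so as to be compatible with the $2^N$-lifted multiplication structure of~$Y$. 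Then $Y = \bigcup_\ell \phi_\ell\lIm Z \rIm$, and for $\ell \ne \ell'$ the images $\phi_\ell\lIm Z \rIm$ and $\phi_{\ell'}\lIm Z \rIm$ meet only inside $j\lIm R \rIm$, where $R = Q \cap \bigcup_{k=1}^{\vdim}\{x : 2x_k = \side{Q}\}$ is the union of median hyperplanes through~$\centre{Q}$.

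Next comes the topological reduction. Since $R$ is star-shaped with respect to $\centre{Q}$, hence contractible, and $(Y, j\lIm R \rIm)$ has the HEP via the CW-structure on~$Y$ inherited from Lemma~\ref{lem:cw-structure}, Remark~\ref{rem:hep-htp-equiv} gives $Y \approx Y / j\lIm R \rIm$. The intersection pattern of the $\phi_\ell\lIm Z \rIm$'s then produces a~homeomorphism $Y/j\lIm R \rIm \simeq \bigvee_{\ell=1}^{2^{\vdim}}(Z/C_\ell, [C_\ell])$, where $C_\ell = \phi_\ell^{-1}\lIm j\lIm R \rIm \cap \phi_\ell\lIm Z \rIm \rIm \subseteq j\lIm S \rIm$ is a~contractible union of $\vdim$~cubical $(\vdim-1)$-faces of $j\lIm S\rIm$ meeting at a~single vertex; by HEP and contractibility, each $Z/C_\ell \approx Z$, whence $Y \approx \bigvee_{\ell=1}^{2^{\vdim}} Z$. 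Tracing $j\lIm S \rIm \hookrightarrow Y$ through this chain yields a~map $\alpha: j\lIm S \rIm \to \bigvee_\ell Z$ whose $\ell$-th wedge projection $\pi_\ell \circ \alpha$ factors (up to homotopy) as the collapse $j\lIm S \rIm \twoheadrightarrow j\lIm S \rIm/(j\lIm S \rIm \without \Int D_\ell) \simeq \sphere{\vdim-1}$ followed by the inclusion into $Z_\ell \approx Z$, where $D_\ell \subseteq j\lIm S \rIm$ is the ``outer-corner'' $(\vdim-1)$-disk complementary to~$C_\ell$; this collapse has degree $\pm 1$.

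Finally, let $\rho: \bigvee_\ell Z \to j\lIm S \rIm$ correspond to~$r$ under the equivalences, and set $a_\ell = \deg(\rho \circ \tau_\ell \circ \iota)$ for $\tau_\ell: Z \hookrightarrow \bigvee_\ell Z$ as in Remark~\ref{rem:wedge-proj}. The relation $\rho \circ \alpha \approx \id{j\lIm S \rIm}$ then forces $\sum_\ell \pm a_\ell = 1$, with signs determined by the orientations of the~$D_\ell$. Since Lemma~\ref{lem:cw-structure} renders each copy of~$U$ homotopy equivalent to a~finite $\vdim$-dimensional cubical complex, $Z$ itself is homotopy equivalent to a~finite $\vdim$-dimensional CW-complex; Lemma~\ref{lem:hom-real} therefore furnishes a~continuous map $g: Z \to \sphere{\vdim-1} = j\lIm S \rIm$ realizing the homomorphism $\sum_\ell \pm (\rho \circ \tau_\ell)_* : \Sh_{\vdim-1}(Z) \to \integers$ on singular homology. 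Evaluating on the class of~$\iota$ yields $\deg(g \circ \iota) = \sum_\ell \pm a_\ell = 1$, and Lemma~\ref{lem:thick-retr} upgrades $g$ to the desired Lipschitz retraction $Z \to j\lIm S \rIm$. The main obstacle is the careful bookkeeping of orientations, basepoints, and HEP hypotheses along the chain of equivalences $Y \approx Y/j\lIm R \rIm \simeq \bigvee_\ell Z_\ell \approx \bigvee_\ell Z$; specifically, verifying that $j\lIm S \rIm \hookrightarrow Y$ is carried on $\Sh_{\vdim-1}$ to the ``diagonal'' class $\sum_\ell \tau_{\ell*}[\iota]$ requires a~direct combinatorial analysis of the corner disks~$D_\ell$ of the cubical sphere~$j\lIm \cBdry{Q} \rIm$.
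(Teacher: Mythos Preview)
Your proposal is correct and follows essentially the same strategy as the paper: collapse the contractible median complex $R$ to identify $Y$ (up to homotopy) with a wedge of $2^{\vdim}$ copies of $Z$, trace the inclusion $j\lIm S\rIm \hookrightarrow Y$ through the equivalences, and then use Lemma~\ref{lem:hom-real} to realise the resulting integer combination of degrees as a single map $Z \to S$ of degree~$1$. The only notable difference is organisational: the paper first retracts the wedge of $Z_i$'s onto the wedge $M = \bigvee X_i$ of the explicit $\vdim$-dimensional cubical complexes $X_i$ (the lifted $2^{N-1}$-multiplication of~$E$) and then invokes the packaged Corollary~\ref{cor:deg-wedge-sum}, whereas you note directly that $Z$ is homotopy equivalent to such a complex and appeal to Lemma~\ref{lem:hom-real} on $Z$ itself; also, the paper simply asserts $\deg(\pi_i \circ \varphi \circ j|_S)=1$ for all $i$, while you are more explicit that a priori these are $\pm 1$ and that the signs must be tracked through the corner disks~$D_\ell$.
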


\begin{proof}
    Suppose there exists a Lipschitz retraction $r : Y \to j \lIm S \rIm$.
    Due to~\ref{lem:thick-retr} it suffices to show that there exists
    a~continuous map $h : Z \to S$ such that $\deg(h \circ j|_S) = 1$.  Set $J =
    \{1,2,\ldots, 2^{\vdim}\}$. Let $(X,j \lIm Q \rIm)$ be the lifted
    $2^{N-1}$-multiplication of~$(E,Q)$ and $(F,j \lIm Q \rIm)$ be the lifted
    $2^{N}$-multiplication of~$(E,Q)$. Observe that $Y$ contains
    $2^{\vdim}$~copies of~$\scale{1/2} \lIm Z \rIm$; let us denote these copies
    $Z_1$, $Z_2$, \ldots, $Z_{2^{\vdim}}$ and the corresponding cubes $Q_1$,
    $Q_2$, \ldots, $Q_{2^{\vdim}}$ so that
    \begin{displaymath}
        Y = \tbcup \{ Z_i : i \in J \}
        \quad \text{and} \quad
        j \lIm Q \rIm = \tbcup \{ Q_i : i \in J \} \,.
    \end{displaymath}
    We also define
    \begin{displaymath}
        S_i = \cBdry{Q_i}
        \quad \text{and} \quad
        X_i = F \cap Z_i 
        \quad \text{for $i \in J$} \,.
    \end{displaymath}
    Let $T = \R^{\vdim} \times \{0\}^{\adim - \vdim} \in
    \grass{\adim}{\vdim}$. Then $Q \subseteq \vertex{Q} + T$. Let $(v_1, v_2,
    \ldots, v_{\adim})$ be the standard basis of~$\R^{\adim}$ and define
    \begin{gather}
        T_i = \lin \{ v_i \}^{\perp} \cap T \in \grass{\adim}{\vdim-1}
        \quad \text{for $i \in \{1,2,\ldots,\vdim\}$} \,,
        \\
        R = j \bigl\lIm \tbcup \bigl\{ (\centre{Q} + T_i) \cap Q : i \in \{1,2,\ldots,\vdim\} \bigr\} \bigr\rIm \subseteq Y \,.
    \end{gather}
    Note that $R$ and $R \cap Z_i$ for $i \in J$ are contractible. Since $U$ is
    open, we have $S \subseteq \Int U$ so the pairs $(Y,R)$ and $(Z_i,R \cap
    Z_i)$ for $i \in \{1,2,\ldots,\vdim\}$ all have the HEP
    by~\ref{rem:hep-suff}. Therefore, $R$ and $Y / R$ are homotopy equivalent
    by~\ref{rem:hep-htp-equiv}. Similarly, $Z_i$ and $Z_i / (R \cap Z_i)$ are
    homotopy equivalent for $i \in J$. Let $q_0 = j(\centre{Q})$. We shall write
    $\eqclsl q_0 \eqclsr$ for the equivalence class of~$q_0$ in a~given quotient
    space. Denoting homotopy equivalence by~``$\approx$'' and homeomorphism
    by~``$\simeq$'' we obtain
    \begin{displaymath}
        Y \approx Y / R
        \simeq {\textstyle \bigvee_{i=1}^{2^{\vdim}}} ( Z_i/(Z_i \cap R), \eqclsl q_0 \eqclsr )
        \approx {\textstyle \bigvee_{i=1}^{2^{\vdim}}} ( Z_i, q_0 ) \,.
    \end{displaymath}
    Set
    \begin{displaymath}
        W = {\textstyle \bigvee_{i=1}^{2^{\vdim}}} ( Z_i, q_0 ) \,,
        \quad
        M = {\textstyle \bigvee_{i=1}^{2^{\vdim}}} ( X_i, q_0 ) \,,
        \quad
        \Sigma = {\textstyle \bigvee_{i=1}^{2^{\vdim}}}(S_i,q_0) \,,
    \end{displaymath}
    and note that $\Sigma \subseteq M \subseteq W$.  Let $\varphi : Y \to W$ and
    $\psi : W \to Y$ be such that $\varphi \circ \psi \approx \id{W}$ and $\psi
    \circ \varphi \approx \id{Y}$. For $i \in J$ let $\pi_i : \Sigma \to S_i$ be
    the projection defined in~\ref{rem:wedge-proj}.  Observe that
    \begin{displaymath}
        \varphi \circ j \lIm S \rIm = \Sigma 
        \quad \text{and} \quad
        \deg(\pi_i \circ \varphi \circ j|_S) = 1
        \quad \text{for $i \in J$} \,.
    \end{displaymath}
    Recall that $E$ is a~strong deformation retract of~$U$; hence, if~$\xi : M
    \hookrightarrow W$ is the inclusion map, there exists a~continuous maps
    $\zeta : W \to M$ such that $\xi \circ \zeta \approx \id{W}$ and $\zeta
    \circ \xi \approx \id{M}$. Moreover, $\xi|_{\Sigma} = \zeta|_{\Sigma} =
    \id{\Sigma}$. Since $E = \tbcup \mathcal B$ we see that $E$ and~$M$ are
    $\vdim$-dimensional CW-complexes by~\ref{rem:cubes-cw}.  Hence, we may
    apply~\ref{cor:deg-wedge-sum} to deduce that
    \begin{multline}
        \bigl\{ |\deg(f \circ \zeta \circ \varphi \circ j|_S)| : f : M \to S \text{ continuous} \bigr\}
        \\
        = \bigl\{ |\deg(g|_S)| : g : X \to S \text{ continuous} \bigr\} \,.
    \end{multline}
    However, if we take $f = p \circ r \circ \psi \circ \xi : M \to S$, then
    \begin{displaymath}
        f \circ \zeta \circ \varphi \circ j|_S
        = p \circ r \circ \psi \circ \xi \circ \zeta \circ \varphi \circ j|_S 
        \approx p \circ r \circ j|_S = \id{S} \,.
    \end{displaymath}
    Therefore, there exists $g : X \to S$ such that $\deg(g \circ j|_S) = 1$.
    Let $\alpha : X_1 \to X$ and $\beta : Z \to Z_1$ be homeomorphisms composed
    of homotheties and translations. Then, recalling~$\zeta|_{\Sigma} =
    \id{\Sigma}$, the composition
    \begin{displaymath}
        S \xrightarrow{\ j|_S\ } Z \xrightarrow{\ \beta\ } Z_1 \xrightarrow{\ \zeta|_{Z_1}\ } X_1 \xrightarrow{\ \alpha\ } X \xrightarrow{\ g\ } S
    \end{displaymath}
    equals $g \circ j|_S$ and has degree one. Employing~\ref{lem:thick-retr} we
    obtain a Lipschitz retraction $Z \to S$.
\end{proof}

\begin{corollary}
    \label{cor:mult-down}
    If $S$ and $U$ are as in~\ref{lem:ind-step}, then $S$ is a Lipschitz retract of~$U$.
\end{corollary}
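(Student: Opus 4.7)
The plan is to deduce the corollary from Lemma~\ref{lem:ind-step} by a~straightforward induction on the parameter~$N \in \natp$ that appears in the setup of~\ref{lem:ind-step}. The hypothesis we carry along the induction is: \emph{if $j\lIm S \rIm$ is a~Lipschitz retract of the lifted $2^N$-multi\-pli\-cation $(Y, j\lIm Q \rIm)$ of~$(U,Q)$, then $S$ is a~Lipschitz retract of~$U$.}

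First, I would treat the base case $N = 0$. Here $Y$ is the lifted $1$-multi\-pli\-cation of~$(U,Q)$, which by Definition~\ref{def:lifted-mult} is exactly $j \lIm U \rIm$ (a~single copy, lifted trivially into~$\R^{\adim} \times \R^{\infty}$). Since $j$ is a~bi-Lipschitz homeomorphism onto its image with Lipschitz inverse~$p|_{j\lIm U \rIm}$, a~Lipschitz retraction $r : j\lIm U \rIm \to j\lIm S \rIm$ yields the Lipschitz retraction $p \circ r \circ j : U \to S$.

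For the inductive step, assume the claim holds for $N-1$ and that $j \lIm S \rIm$ is a~Lipschitz retract of the lifted $2^N$-multi\-pli\-cation~$Y$. Let $(Z, j\lIm Q \rIm)$ be the lifted $2^{N-1}$-multi\-pli\-cation of~$(U,Q)$. Lemma~\ref{lem:ind-step} applied with the current~$N$ produces a~Lipschitz retraction of~$Z$ onto~$j\lIm S \rIm$. The inductive hypothesis then furnishes a~Lipschitz retraction of~$U$ onto~$S$, completing the induction.

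Two points deserve attention but are not obstacles. The first is that the statement of the corollary is really read together with the setup of~\ref{lem:ind-step}; in~particular, the assumption that $j\lIm S \rIm$ is a~Lipschitz retract of the lifted $2^N$-multiplication~$Y$ is the standing hypothesis one iterates. The second is that at each step of the induction the ambient space of the lifted multiplication remains $\R^\adim \times \R^\infty$, so invoking~\ref{lem:ind-step} requires no change of ambient space; the Lipschitz retractions transfer between successive lifted multiplications without any additional homotopy-theoretic work, because all the hard topology has already been concentrated inside Lemma~\ref{lem:ind-step}. Hence the whole content of the corollary is a~formal iteration.
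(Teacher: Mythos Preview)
Your proof is correct and follows the same route as the paper's own argument: an induction on~$N$, with the base case handled by the (bi-Lipschitz) identification of the lifted $2^0$-multiplication with~$j\lIm U\rIm$, and the inductive step supplied directly by Lemma~\ref{lem:ind-step}. Two cosmetic points: you write ``induction on~$N \in \natp$'' but then start at $N=0$ (the paper takes $N \in \nat$), and, strictly speaking, the lifted $1$-multiplication is $\eta_1\lIm U\rIm$ rather than literally $j\lIm U\rIm$, though the two are bi-Lipschitz equivalent via $p$ and $\eta_1$, so your conclusion goes through unchanged; the paper makes the same identification.
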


\begin{proof}
    We assume $j \lIm S \rIm$ is a Lipschitz retract of~$Y$, where $Y$ is the
    lifted $2^N$-mul\-ti\-pli\-cation of $(U,Q)$. We proceed by induction with
    respect to $N \in \nat$. If $N = 0$, we have $j \lIm U \rIm = Y$ so $S$ is
    a~Lipschitz retract of~$U$ by assumption. The inductive step is now a~direct
    application of~\ref{lem:ind-step}.
\end{proof}

\begin{theorem}
    \label{thm:all-tp-good}
    Assume $N \in \natp$, $(X,Q)$ is a cubical test pair, and $(Y,Q)$ is the
    $2^N$-mul\-ti\-pli\-ca\-tion of $(X,Q)$. Then $(Y,Q)$ is a cubical test pair.
\end{theorem}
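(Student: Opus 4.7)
The plan is to argue by contradiction. Assume $(Y,Q)$ is not a cubical test pair; then there exists a Lipschitz retraction $r : Y \to \cBdry{Q}$, and the goal is to produce a Lipschitz retraction $X \to \cBdry{Q}$, contradicting the assumption on $(X,Q)$. Set $S = \cBdry{Q}$. Compactness and $(\HM^{\vdim},\vdim)$~rectifiability of~$Y$ follow directly from the corresponding properties of~$X$ together with Definition~\ref{def:multiplication}, so only the non-retraction condition has to be verified.

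First I would pass to the lifted $2^N$-multiplication $(\widetilde Y_X, j\lIm Q \rIm)$ of $(X,Q)$ introduced in Definition~\ref{def:lifted-mult}. Since $p \circ \eta_i$ is a~translated rescaling carrying $X$ into~$Y$, the composition $j \circ r \circ p : \widetilde Y_X \to j\lIm S \rIm$ is well defined and provides a~Lipschitz retraction.

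Next I would invoke Lemma~\ref{lem:cw-structure} to obtain, for a~small $\varepsilon > 0$ to be fixed later, an open set $U \subseteq \R^{\adim}$ with $X \subseteq U \subseteq X + \cball{0}{\varepsilon}$ together with a~finite cubical complex $E = \tbcup \mathcal B$ satisfying $S \subseteq E \subseteq U$ and being a~strong deformation retract of~$U$. Let $(\widetilde Y_U, j\lIm Q \rIm)$ denote the lifted $2^N$-multiplication of $(U,Q)$. Then $\widetilde Y_X \subseteq \widetilde Y_U$, and since the lifting maps $\eta_i$ are Lipschitz, for $\varepsilon$ sufficiently small we have $\widetilde Y_U \subseteq \widetilde Y_X + \cball{0}{\delta}$ for any preassigned $\delta > 0$. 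Since $S$ admits a~Lipschitz retraction from any of its small neighbourhoods, Lemma~\ref{lem:thick-retr}\ref{i:retr:thick-cont} applies and the Lipschitz retraction $\widetilde Y_X \to j\lIm S \rIm$ extends to a~Lipschitz retraction of some thickening $\widetilde Y_X + \cball{0}{\delta}$; restricting this extension to~$\widetilde Y_U$ yields a~Lipschitz retraction $\widetilde Y_U \to j\lIm S \rIm$.

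Finally, Corollary~\ref{cor:mult-down} applied with these $U$ and~$E$ produces a~Lipschitz retraction $U \to S$; restricting to $X \subseteq U$ contradicts $(X,Q)$ being a~cubical test pair. The main obstacle in this outline is the passage from a retraction of~$\widetilde Y_X$ to a retraction of~$\widetilde Y_U$, which is resolved by matching the parameter $\varepsilon$ of the neighbourhood~$U$ with the $\delta$ supplied by Lemma~\ref{lem:thick-retr}\ref{i:retr:thick-cont}; the genuinely topological content --- the cellular homology computation, the obstruction-theoretic extension, and the degree-sum identity --- is already packaged inside Corollary~\ref{cor:mult-down} through Lemma~\ref{lem:ind-step}.
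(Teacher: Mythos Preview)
Your proposal is correct and follows essentially the same route as the paper's proof. The only difference is the order of two steps: the paper first thickens the ordinary multiplication~$Y$ via Lemma~\ref{lem:thick-retr} to obtain~$\delta$, then applies Lemma~\ref{lem:cw-structure} and passes directly to the lifted multiplication of~$(U,Q)$, using that $p$ carries the latter into $Y + \cball{0}{2^{-N}\delta}$; you instead first lift to~$\widetilde Y_X$, then thicken, then compare with~$\widetilde Y_U$. Both arrive at the hypothesis of Corollary~\ref{cor:mult-down} in the same way, and the rest is identical.
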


\begin{proof}
    Using homotheties and rotations we may and shall assume that $Q =
    [0,1]^{\vdim} \times \{0\}^{\adim-\vdim} \in \cubes^{\adim}_{\vdim}(0)$.
    We~only need to show that $S = \cBdry{Q}$ is not a Lipschitz retract
    of~$Y$. Let $p$ and $j$ be as in~\ref{def:lifted-mult}. Assume, by
    contradiction, that there is a~Lipschitz retraction of~$Y$
    onto~$S$. Employing~\ref{lem:thick-retr} we find $\delta \in (0,1)$ such
    that~$S$ is a~retract of $Y +
    \cball{0}{2^{-N}\delta}$. Apply~\ref{lem:cw-structure} with $X$, $Q$,
    $\delta$ in place of $X$, $Q$, $\varepsilon$ to obtain a finite set
    $\mathcal B \subseteq \cubes^{\adim}_{\vdim}$ and an open set $U \subseteq X
    + \cball{0}{\delta}$ such that $E = \tbcup \mathcal B$ is a strong
    deformation retract of~$U$ and $X \subseteq U$. Let $(Z,j \lIm Q \rIm)$ be
    the lifted $2^N$-mul\-ti\-pli\-cation of $(U,Q)$. Clearly $p \lIm Z \rIm = Y$ and
    $p \circ j|_S = \id{S}$, so $j \lIm S \rIm$ is a Lipschitz retract of~$Z$.
    Applying~\ref{lem:ind-step} to $U$, $Q$, $N$, $\mathcal B$
    and then~\ref{cor:mult-down}, we conclude that~$S$ is
    a~Lipschitz retract of~$U$ which contains~$X$, so $S$ is also a~Lipschitz
    retract of~$X$ and this contradicts the assumption that $(X,Q)$ is a cubical
    test pair.
\end{proof}

\begin{remark}
    To conclude we gather all our results in one place. Let $x \in \R^{\adim}$,
    $\mathcal C$ be the set of all cubical test pairs, $\mathcal P$ be the set
    of all test pairs, $\mathcal R$ be the set of all rectifiable test
    pairs. Then
    \begin{enumerate}
    \item if $U \subseteq \R^{\adim}$ is open, $F \in \FwBC_x$ for all $x \in
        U$, $F$ is bounded, and $\mathcal G$ is a good class in the sense
        of~\cite[3.4]{FangKol2017}, then there exists $S \in \mathcal G$ such
        that $\Phi_F(S) = \inf\{ \Phi_F(R) : R \in \mathcal G \}$;
    \item $\FAE_x(\mathcal P) = \FAE_x(\mathcal C) = \FAE_x(\mathcal R)$
        and $\FAUE_x(\mathcal P) = \FAUE_x(\mathcal C) = \FAUE_x(\mathcal R)$;
    \item $\FAC_x = \FwBC_x \subseteq \FAE_x(\mathcal C)$.
    \end{enumerate}
    Moreover, if $\adim = \vdim+1$, then by~\cite[Theorem~1.3]{DDG2016rect}
    we~know that $F \in \FAC_x$ if and only if the function
    \begin{equation}
        \label{eq:F-to-G}
        G(x,\nu) = |\nu| F(x, \lin \{ \nu \}^{\perp}) 
        \quad \text{for every $x,\nu \in \R^{\adim}$}
    \end{equation}
    is strictly convex in all but the radial directions, namely
    \begin{displaymath}
        G(x,\nu) > \langle D_\nu G(x,\bar \nu),\nu\rangle
        \qquad \text{for every $x \in \R^{\adim}$, $\bar \nu,\nu\in \mathbb S^{n-1}$ and  $\nu\ne \pm \bar \nu $} \,.
    \end{displaymath}
    Hence, given $\adim = \vdim+1$,
    \begin{enumerate}[resume]
    \item if $F$ is a $\cnt^1$~integrand such that the corresponding
        function~$G$, as in~\eqref{eq:F-to-G}, is strictly convex, then $F \in
        \FAE_x(\mathcal P)$.
    \end{enumerate}
\end{remark}

\begin{remark}\label{remarkimpl}
    In~\cite[IV.1(7), p.~88]{Alm76} Almgren observes that uniformly convex
    functions give rise to anisotropic lagrangians satisfying $\FAUE_x(\mathcal
    P)$ in co-dimension $1$ and vice-versa, where $\mathcal P$ is the class of
    test pairs. Our result shows that functions that are just strictly convex
    give rise to anisotropic lagrangians satisfying $\FAE_x(\mathcal P)$ in
    co-dimension $1$, for every good family~$\mathcal P$. In~particular we
    deduce that there is no hope of improving Theorem \ref{thm:wBC-in-AE}
    showing that $\FwBC_x \subseteq \FAUE_x(\mathcal P)$. Indeed, if this was
    the case, in co-dimension one the strict convexity of the integrand would
    give rise to an anisotropic lagrangian satisfying $\FBC_x$ and consequently
    also $\FAUE_x(\mathcal P)$, which in turn would imply the uniform convexity
    of the integrand.
 \end{remark}

        





\subsection*{Acknowledgements}


The authors thank Guido De Philippis, Jonas Hirsch, Ulrich Menne and Robert
Young for useful discussions, in particular on
Sections~\ref{sec:wbc-in-ae}~and~\ref{sec:all-test-pairs-good}. The authors are
also grateful to Andrzej Weber without whose generous help the completion of
Section~\ref{sec:all-test-pairs-good} would not be possible.

The~first author has been supported by the NSF DMS Grant No.~1906451.
The~second author was supported by the National Science Centre Poland grant
no.~2016/23/D/ST1/01084.

\newcommand{\noopsort}[1]{}
\newcommand{\singleletter}[1]{#1}
\def\cprime{$'$}

{\small \noindent
Antonio De Rosa
\\
Courant Institute of Mathematical Sciences,
\\
New York University, New York, NY, USA
\\
\texttt{derosa@cims.nyu.edu}
}

\bigskip

{\small \noindent
  S{\l}awomir Kolasi{\'n}ski
  \\
  Instytut Matematyki,
  Uniwersytet Warszawski
  \\
  ul. Banacha 2, 02-097 Warszawa, Poland
  \\
  \texttt{s.kolasinski@mimuw.edu.pl}
}

\end{document}